\documentclass[10pt]{article}

\usepackage[utf8]{inputenc}
\usepackage[T1]{fontenc}
\usepackage{hyperref}
\usepackage{url}

\usepackage{natbib}
\bibliographystyle{abbrvnat}
\setcitestyle{authoryear,open={(},close={)}}

\usepackage{booktabs}
\usepackage{multirow}

\usepackage{amsmath,amssymb,amsthm}
\usepackage{mathrsfs,nccmath,mathtools,nicefrac}

\usepackage[ruled,algo2e,linesnumbered]{algorithm2e}
\usepackage{enumitem}
\usepackage{graphicx,subcaption}

\usepackage{geometry,comment}
\usepackage[noblocks]{authblk}

\usepackage{xcolor}
\usepackage{pbox,adjustbox}

\setlength{\oddsidemargin}{0.2in}
  \setlength{\evensidemargin}{0.2in}
  \setlength{\marginparwidth}{0.07 true in}
  \setlength{\topmargin}{-0.5in}
  \addtolength{\headsep}{0.25in}
  \setlength{\textheight}{8.5 true in}
  \setlength{\textwidth}{6.35 true in}

\newcommand\Halmos{\qed}

\linespread{1.1}
  
\makeatletter
\newcommand{\removelatexerror}{\let\@latex@error\@gobble}
\makeatother

\makeatletter

\setbox0\hbox{$\xdef\scriptratio{\strip@pt\dimexpr
    \numexpr(\sf@size*65536)/\f@size sp}$}

\newcommand{\scshortrightarrow}[1][5pt]{{%
    \hbox{\rule[\scriptratio\dimexpr\fontdimen22\textfont2-.2pt\relax]
               {\scriptratio\dimexpr#1\relax}{\scriptratio\dimexpr.4pt\relax}}%
   \mkern-4mu\hbox{\let\f@size\sf@size\usefont{U}{lasy}{m}{n}\symbol{41}}}}

\makeatother

\allowdisplaybreaks

\setlist{leftmargin=0pt,itemindent=15pt,labelwidth=8pt,labelsep=7pt,listparindent=15pt,topsep=2pt, itemsep = 2pt}

\DeclareMathOperator*{\argmin}{arg\,min}
\DeclareMathOperator*{\argmax}{arg\,max}

\newtheorem{theorem}{Theorem}

\newtheorem{lemma}{Lemma}

\newtheorem{assumption}{Assumption}

\theoremstyle{definition}

\theoremstyle{remark}
\newtheorem{remark}{Remark}

\title{\bf Zeroth-Order Feedback Optimization for Cooperative Multi-Agent Systems
}
\date{}

\author[1]{Yujie Tang}
\author[1]{Zhaolin Ren}
\author[1]{Na Li}
\affil[1]{School of Engineering and Applied Sciences,
Harvard University}

\begin{document}

\maketitle

\begin{abstract}
We study a class of cooperative multi-agent optimization problems, where each agent is associated with a local action vector and a local cost, and the goal is to cooperatively find the joint action profile that minimizes the average of the local costs. Such problems arise in many applications, such as distributed routing control, wind farm operation, etc. In many of these problems, gradient information may not be readily available, and the agents may only observe  their local costs incurred by their actions as a feedback to determine their new actions. In this paper, we propose a zeroth-order feedback optimization scheme for the class of problems we consider, and provide explicit complexity bounds for both the convex and nonconvex settings with noiseless and noisy local cost observations. We also discuss briefly on the impacts of knowledge of local function dependence between agents. The algorithm's performance is justified by a numerical example of distributed routing control. 

\vspace{5pt}
\noindent{\bf Keywords: } cooperative multi-agent systems; distributed optimization; zeroth-order optimization
\end{abstract}

\section{Introduction}

Decentralized optimization of cooperative multi-agent systems has been an actively-studied research topic for decades, and has found a wide range of applications. In many of these applications, decision-makers may not have access to a sufficiently accurate model of the underlying system, which imposes a significant challenge for the design of optimization algorithms. One motivating example comes from the wind farm power maximization problem~\citep{marden2013model}, where individual wind turbines (agents) need to coordinate to maximize the total power generation,  but no accurate models of the system are available due to the high complexity of the aerodynamic interactions between turbines (see Section~\ref{subsec:wind_farm} for the detailed problem formulation). With recent advances in zeroth-order/derivative-free optimization theory and algorithms, researchers have started to investigate their applications in model-free optimization of multi-agent systems.

In this paper, we study model-free decentralized optimization for a specific class of cooperative multi-agent systems. Specifically, the cooperative multi-agent system comprises a group of $n$ decision-making agents connected by a communication network. Associated with each agent is a local action $x^i\in\mathbb{R}^{d_i}$, and after the agents take their actions, a local cost $f_i(x^1,\ldots,x^n)$ will be observed by agent $i$ which reflects the impact of all agents' actions. The goal for the agents is to cooperatively seek their \emph{local} actions that minimize their averaged cost as the \emph{global} objective characterizing the system-wise performance. We focus on the \emph{model-free} setting, where each agent can only utilize the observed (zeroth-order) feedback values of the associated local cost, but not (higher-order) derivatives thereof. This problem setup and similar variants can cover many real-world applications, such as the aforementioned wind farm power optimization problem~\citep{marden2013model}, as well as distributed routing control~\citep{li2013designing}, mobile sensor coverage~\citep{cortes2004coverage}, power control in wireless networks~\citep{candogan2010near}, etc. We refer to such optimization problems as \emph{(cooperative) multi-agent zeroth-order feedback optimization}.

We emphasize that in our problem setup, each agent $i$ can only control its own action vector $x^i$, but each local cost $f_i$ is a function of the joint action profile $x \coloneqq (x^1,\dots,x^n)$, i.e., agent $i$'s local cost value depends on not only its own action vector $x^i$ but also possibly the actions of all other agents (or a subset of other agents). Such coupling in the local cost functions adds complexities in optimizing the global objective via local information, and requires carefully designed schemes of coordination among agents. We also point out that this problem setup is different from the more commonly studied \emph{consensus optimization} setup, in which each agent maintains a local copy of the global decision variable, and is able to evaluate its local cost at its own local copy without being directly affected by other agents (see \citet{nedic201010} for a survey).

To the best of our knowledge, the study on the design and analysis of effective algorithms for cooperative multi-agent zeroth-order feedback optimization is still limited in the literature (see Section~\ref{sec:related_work} for more discussion on literature). This motivates our study of cooperative multi-agent optimization that leverages solely zeroth-order feedback information.

\subsection{Our Contributions}
First, we propose a Zeroth-order Feedback Optimization (ZFO) algorithm for cooperative multi-agent systems.
Our ZFO algorithm is based on local computation and communication of the two-point zeroth-order gradient estimators investigated in \citet{nesterov2017random,shamir2017optimal}. More specifically, for each iteration, each agent first takes its own actions and observes the corresponding zeroth-order values of its own local cost, then collects and updates zeroth-order information of other agents' costs by exchanging data with its neighbors in the network, and finally constructs a two-point zeroth-order partial gradient estimate for updating its own action vector. The communication network could be subject to potential delays.

Second, we conduct complexity analysis of our ZFO algorithm. We analyze situations with both convex and nonconvex objectives, and derive complexity bounds for both noise-free and noisy zeroth-order evaluations. A summary of the complexity bounds can be found in Table~\ref{tab:convergence_convex}. Here we list the number of iterations needed for the proposed algorithm to converge with accuracy $\epsilon>0$, where the accuracy is measured by the expected optimality gap in the objective value for the constrained convex setting, and by the expected squared norm of the gradient averaged along the iterates for the unconstrained nonconvex setting (see Section~\ref{sec:analysis} for detailed definitions). These complexity bounds are also compared with the centralized counterparts. In addition, apart from the dependence on $\epsilon$, we also provide the dependence of the complexity bounds on the problem's dimension $d$, and on the communication network's structure and delays quantified by $\bar{\mathfrak{b}}$ and $\bar{b}$. To the best of our knowledge, this work seems to be the first to provide explicit complexity bounds for algorithms of multi-agent zeroth-order feedback optimization with analysis on the impact of problem dimension and network structure.

\begin{table}[t]
\aboverulesep = 1.5mm
\belowrulesep = 1.5mm
    \centering
    \begin{tabular}{c|c|c|c}
    \toprule
    & \multicolumn{2}{c|}{constrained convex setting} &
    \multirow{2}{4cm}{\centering unconstrained nonconvex setting}\\
    \cline{2-3}
    & $x^\ast\in\operatorname{int}\mathcal{X}$ known & $x^\ast\in\operatorname{int}\mathcal{X}$ not known \\
    \midrule
    noiseless & \multicolumn{2}{c|}{$\Theta\!\left(\dfrac{\bar{\mathfrak{b}}d}{\epsilon^2}\right)$}
    & $\Theta\!\left(\dfrac{\bar{b}\sqrt{n}d}{\epsilon^2}\right)$\\
    \midrule
    noisy & $\Theta\!\left(\dfrac{\bar{\mathfrak{b}}(d^2+d\ln(1/\epsilon)}{\epsilon^3}\right)$
    &  $\Theta\!\left(\dfrac{\bar{\mathfrak{b}}(d^2+d\ln(1/\epsilon)}{\epsilon^4}\right)$
    & $\Theta\!\left(\dfrac{\bar{b}\sqrt{n}d^2}{\epsilon^3}\right)$ \\
    \bottomrule
    \end{tabular}
    \vspace{5pt}
    \caption{Complexity bounds for our Zeroth-order Feedback Optimization algorithm.}
    \label{tab:convergence_convex}
\end{table}



Compared to the authors' conference paper~\citet{tang2020zeroth} which only analyzed the unconstrained nonconvex setting with noiseless zeroth-order evaluations, this journal article contains new results for i) the convex setting where the global objective function is convex and the feasible regions are compact and convex, and ii) the situations where zeroth-order evaluations are corrupted by additive noise. In order to deal with the compact constraints in the convex setting, we introduce and analyze a new sampling procedure for the random perturbations in zeroth-order gradient estimation. We also conduct a preliminary investigation on how knowledge of local function dependence can be exploited to improve convergence and reduce communication burden. We provide new numerical results on a distributed routing control test case with a convex global objective.

\subsection{Related Work}\label{sec:related_work}

Existing literature has investigated cooperative multi-agent zeroth-order feedback optimization and its variants from a number of different angles. One line of works~\citep{menon2013distributed, menon2013convergence,marden2014achieving,menon2014collaborative} has been motivated by the wind farm power maximization problem and has developed algorithms for \textit{social welfare maximization of multi-agent games}. Specifically, \citet{marden2014achieving,menon2013convergence} studied welfare maximization of multi-agent games with discrete action spaces, which can be viewed as a discrete analog of our problem setup; \citet{menon2013distributed} proposed a modified algorithm that incorporates exchange of information between agents to eliminate the restrictions on the payoff structure in previous works; \citet{marden2013model} considered application of these algorithms to the wind farm power maximization problem. Then in \citet{menon2014collaborative}, the authors studied welfare maximization of multi-agent games with continuous action spaces, which is essentially identical to our problem setup; they developed a continuous-time decentralized payoff-based algorithm using extremum seeking control and consensus on the local payoffs. 
The paper~\citet{dougherty2016extremum} motivated its problem setup from distributed extremum seeking control over sensor networks, but can also be regarded as an extension of \citet{menon2014collaborative}, which further handles coupled constraints on the actions by barrier functions.
We point out that, apart from implementation issues of continuous-time algorithms, these works that were based on extremum seeking control have the limitation that they only established convergence to a neighborhood of an optimal joint action for limited situations, contrary to our work that establishes explicit complexity bounds for both convex and nonconvex settings that also reflect the impact of problem dimension and network structure.
In another related direction, \citet{li2013designing}~considered the problem of designing local objective functions so as to optimize global behavior in multi-agent games but it assumes the knowledge of the objective function structure.

Some other related areas and works are summarized as follows.

\paragraph{Zeroth-order optimization.}
Our work employs zeroth-order optimization techniques to deal with the lack of model information. In the centralized setting, one line of research on zeroth-order optimization has focused on constructing gradient estimators using zeroth-order function values ~\citep{duchi2015optimal,flaxman2004online,nesterov2017random,shamir2017optimal,larson2019derivative}, and there have also been works proposing direct-search methods that do not seek to approximate a gradient \citep{torczon1997convergence,agarwal2013stochastic}. A survey can be found in \citet{larson2019derivative}. In addition, there has been increasing interest recently in exploiting zeroth-order optimization methods in a distributed setting~\citep{hajinezhad2019zone,sahu2018distributed, tang2020distributed, yu2019distributed,li2019distributed}. However, to the best of our knowledge, most of them focus on the consensus optimization setup, rather than the cooperative multi-agent system setup discussed in this work.

\paragraph{Distributed optimization.}
Another related research area is distributed optimization. While our setting is different from consensus optimization~\citep{nedic2009distributed, chang2014multi,shi2015extra, qu2017harnessing, pu2018distributed}, we note that in both settings, collaborations among agents are needed for optimizing the global objective. In addition, in our problem setup, the agents will naturally experience delays when receiving information from other (possibly distant) agents in the network due to the local nature of communication. We shall see later that our algorithm and analysis share similarities with asynchronous/delayed distributed optimization~\citep{nedic2010asynchronous, agarwal2011distributed, zhang2014asynchronous, lian2015asynchronous, liu2015asynchronous,lian2016comprehensive}. However, our work appears to be the first that studies the effects of delays in a decentralized zeroth-order setting.

\subsection*{Notation}
Throughout this paper, we use $\|\cdot\|$ to denote the standard $\ell_2$ norm, and use $\langle\cdot,\cdot\rangle$ to denote the standard inner product. For any real-valued differentiable function $h(x)=h(x^1,\ldots,x^n)$, we use $\nabla^i h(x)$ to denote the partial gradient of $h$ with respect to $x^i$. For any $x\in\mathbb{R}$, we denote $[x]_+=\max\{0,x\}$. For a finite set $A$, we use $|A|$ to denote its number of elements. For any set $\mathcal{S}\subseteq\mathbb{R}^p$, we use $\operatorname{int}\mathcal{S}$ to denote its interior, use $\mathcal{S}+x$ to denote $\{s+x:s\in\mathcal{S}\}$ for any $x\in\mathbb{R}^p$, and use $u\mathcal{S}$ to denote $\{us:s\in\mathcal{S}\}$ for any $u\in\mathbb{R}$. The projection of $x\in\mathbb{R}^p$ onto a closed convex set $C\subseteq\mathbb{R}^p$ will be denoted by $\mathcal{P}_C[x]$. The closed unit ball in $\mathbb{R}^p$ will be denoted by $\mathbb{B}_p$. The $p\times p$ identity matrix will be denoted by $I_p$. $\mathcal{N}(\mu,\Sigma)$ denotes the Gaussian distribution with mean $\mu$ and covariance $\Sigma$.

\section{Problem Formulation}\label{sec:formulation}

Consider a group of $n$ agents, where agent $i$ is associated with an action vector $x^i \in \mathcal{X}_i\subseteq\mathbb{R}^{d_i}$ for each $i=1,\ldots,n$. Each set $\mathcal{X}_i$ is convex and closed. The joint action profile of the group of agents is then $x \coloneqq (x^1,x^2,\dots,x^n)
\in\mathcal{X}$, where $\mathcal{X}\coloneqq
\prod_{i=1}^n\mathcal{X}_i\subseteq \mathbb{R}^d$ and $d \coloneqq \sum_{i=1}^n d_i$. Upon taking action jointly, each agent $i$ receives a corresponding local cost $f_i(x)=f_i(x^1,\ldots,x^n)$ that depends on the joint action profile $x$, i.e., the actions of all agents. The goal of the agents is to cooperatively find the local actions that minimize the average of the local costs, i.e., to solve the following problem
\begin{align} \label{main-problem}
    \min_{x \in \mathcal{X}} \ \ 
    f(x)\coloneqq \dfrac{1}{n} \sum_{i=1}^n f_i(x^1,\dots,x^n),
\end{align}
where $f(x)$ denotes the global objective function defined as the average cost among agents.

Since the local costs are affected by all agents' actions, when solving the problem~\eqref{main-problem}, each agent will not only need to collect information on its own local cost, but also need to communicate and collaborate with other agents by exchanging necessary information. We further impose two assumptions for our problem setup; the first pertains to the type of information the agents can access, and the second to communication mechanism among agents:
\begin{enumerate}

    \item {\bf Access to only zeroth-order information.} Each agent $i$ can only access zeroth-order function value of its local cost $f_i$, and derivatives of $f_i$ of any order are not available. Moreover, the function values can only be obtained through observation of feedback cost after actions have been taken. Precisely, each agent $i$ first determines its action vector $x^i$ and takes the action, yielding a new joint action profile $x = (x^1,\dots,x^n)$, and then observes its corresponding local cost $f_i$ evaluated at $x = (x^1,\dots,x^n)$. We shall also assume that the constraint $x\in\mathcal{X}$ is a hard constraint in the sense that each $f_i$ is defined only on $\mathcal{X}$ and each agent can only explore their local costs within the set $\mathcal{X}$.
    
    In this paper, we consider two cases regarding the observation of local cost values:
    \begin{enumerate}[itemindent=18pt,labelwidth=14pt,labelsep=4pt]
    \item Noiseless case: In this case, it is assumed that each agent can observe its local cost accurately without being corrupted by noise.
    \item Noisy case: In this case, it is assumed that the observed local cost value will be corrupted by some additive random noise with zero mean and variance bounded by $\sigma^2$. We assume that the additive noises are independent of each other and are also independent of the action profile $x$.
    \end{enumerate}
    
    \item {\bf Localized communication.} We let the $n$ agents be connected by a communication network. The topology of the communication network is represented by an undirected, connected graph $\mathcal{G} = (\{1,\dots,n\}, \mathcal{E}),$ where the edges in $\mathcal{E}$ correspond to the bidirectional communication links. Each agent is only allowed to exchange messages directly with its neighbors in the network $\mathcal{G}$. 
    We shall denote the distance (the length of the shortest path) between the pair of nodes $(i,j)$ in the graph $\mathcal{G}$ by $b_{ij}$.
\end{enumerate}

We adopt the following technical assumptions throughout the paper:
\begin{assumption}\label{assumption:1}
\begin{enumerate}
\item Without loss of generality, we assume that for each $i=1,\ldots,n$, the closed and convex set $\mathcal{X}_i$ has a nonempty interior, and that $0\in\operatorname{int}\mathcal{X}_i$. We denote $\underline{r}_i\coloneqq \sup\{r>0: r\mathbb{B}_{d_i}\subseteq\mathcal{X}_i\}$ and $\underline{r}\coloneqq \min_{i=1,\ldots,n}\underline{r}_i$. It can be seen that $\underline{r}\in(0,+\infty)$ and $\underline{r}\mathbb{B}_d\subseteq\mathcal{X}$ when $\mathcal{X}$ is a proper subset of $\mathbb{R}^d$.

\item Each local cost function $f_i$ is $G$-Lipschitz and $L$-smooth on $\mathcal{X}_i$, i.e.,
$$
|f_i(x)-f_i(y)|\leq G \|x-y\|,
\qquad
\|\nabla f_i(x)-\nabla f_i(y)\|
\leq L\|x-y\|
$$
for any $x,y\in\mathcal{X}$ for each $i=1,\ldots,n$.
\end{enumerate}
\end{assumption}

In the following subsections, we present two  examples abstracted from practical problems which fit the aforementioned formulation.

\subsection{Example 1: Distributed Routing Control}
\label{subsec:distributed_routing}
Consider the following distributed routing control problem, based on an example in \citet{li2013designing}. We have $m$ routes indexed by $1,\ldots,m$, and $n$ agents each seeking to send an amount of traffic $Q_i>0$ using these routes. Each agent $i$ is able to use a subset $\mathcal{R}_i$ of the $m$ routes; we note that $\mathcal{R}_i \cap \mathcal{R}_j$ can be nonempty for $i\neq j$, meaning that the routes can be shared between agents. Each agent $i$ is associated with an action vector $v^i\in\mathbb{R}^{\mathcal{R}_i}$ that lies in the probability simplex, 
where $v^i_r$ represents the proportion of the traffic $Q_i$ allocated to the route $r\in \mathcal{R}_i$. The joint action profile is $v$, where $v = (v^1,\dots,v^n)$. Our goal is to minimize the global cost function given by
\[
f(v) = \frac{1}{n}\sum_{i=1}^n f_i(v), \qquad f_i(v) = \sum_{r\in\mathcal{R}_i} v^i_r Q_i \cdot c_r\!\left( \sum_{j: r\in\mathcal{R}_j} v^j_r Q_j\right).
\]
Here $c_r: [0,\infty) \to \mathbb{R}$ is a congestion function, which measures the congestion time that depends on the total traffic $\sum_j v^j_r Q_j$ on route $r$. We observe that each local cost $f_i$ is affected by other agents' actions $(v^j)_{j \neq i}$.

We assume the following mechanism of collecting and exchanging information among agents:
\begin{enumerate}
\item {\bf Access to only zeroth-order information.} Each agent $i$ does not know the specific form of the congestion function $c_r$, and can only observe the local cost $f_i(v)$ corresponding to the currently implemented action profile $v$.
    
\item {\bf Localized communication.}
The agents are connected by a bidirectional communication network, and each agent can only directly talk to its neighbors.
\end{enumerate}

Note that the probability simplex in $\mathbb{R}^{\mathcal{R}_i}$ has an empty interior, which does not satisfy Assumption~\ref{assumption:1}. One way to handle this issue is to arbitrarily select one route $\check{r}_i\in\mathcal{R}_i$, remove $v^i_{\check{r}_i}$ from the action vector $v^i$, and replace the constraints on $v^i$ by
$$
v^i_r\geq 0\ \ \forall r\in\mathcal{R}_i\backslash\{\check{r}_i\},
\qquad
\sum_{r\in\mathcal{R}_i\backslash\{\check{r}_i\}} v^i_r\leq 1.
$$
After a further translation of the variables to include the origin in the interior, Assumption~\ref{assumption:1} will be satisfied. The variable $v^i_{\check{r}_i}$ can be recovered by $v^i_{\check{r}_i} = 1-\sum_{r\in\mathcal{R}_i\backslash\{\check{r}_i\}} v^i_r$.

\subsection{Example 2: Wind Farm Power Maximization}\label{subsec:wind_farm}

Consider a wind farm consisting of $n$ wind turbines. Each turbine $i$ is associated with an agent that monitors the turbine's generated power and controls the amount of wind the turbine can harness by adjusting the turbine's axial induction factor $a^i\in[0,1]$. According to the Park model~\citep{scholbrock2011optimizing}, when a wind turbine extracts energy out of the wind, it creates a wake downstream with reduced wind speed. Consequently, the power generated by turbine $i$, which we denote by $P_i$, depends not only on its own axial induction factor $a^i$ but also on those of wind turbines upstream. Therefore, $P_i$ is in general a function of the joint axial induction factor profile $a\coloneqq (a^1,\ldots,a^n)$. The goal is to find the axial induction factors of each wind turbine such that the total generated power is maximized, i.e.,
\begin{align}\label{eq:wind_power_max}
    \max_{a=(a^1,\ldots,a^n)} \frac{1}{n}\sum_{i=1}^n P_i(a^1,\ldots,a^n).
\end{align}
We assume the following mechanism of collecting and exchanging information among agents:
\begin{enumerate}
\item {\bf Access to only zeroth-order information.} Due to the highly complex aerodynamic interactions between turbines~\citep{marden2013model}, the agents are not able to numerically compute $P_i$ or its derivatives. On the other hand, each agent $i$ is able to measure turbine $i$'s generated power $P_i$ corresponding to the implemented axial induction profile $a=(a^1,\ldots,a^n)$ at any time.
    
\item {\bf Localized communication.}
The agents are connected by a bidirectional communication network, and can only directly talk to the neighbors in the network.
\end{enumerate}
We refer to \citet{marden2013model} for more details on the wind farm model and the power maximization problem.

\section{Algorithm}

\subsection{Preliminaries on Zeroth-Order Gradient Estimators}

We first give a preliminary introduction to the zeroth-order optimization methods adopted in this paper. Consider the following zeroth-order gradient estimator~\citep{nesterov2017random}:
\begin{equation}\label{eq:2point_grad_est}
\mathsf{G}_f(x;u,z)
=\frac{f(x\!+\!uz)-f(x\!-\!uz)}{2u}z,
\end{equation}
where $u$ is a positive parameter called the \emph{smoothing radius}, and $z$ is a perturbation direction sampled from an isotropic distribution on $\mathbb{R}^d$ with finite second moment. \citet{nesterov2017random} shows that, if we use the Gaussian distribution $\mathcal{N}(0,I_d)$ as the distribution for the perturbation direction $z$, then
$$
\mathbb{E}_{z\sim\mathcal{N}(0,I_d)}\!\left[
\mathsf{G}_f(x;u,z)
\right]
=\nabla f^u(x),
$$
where $f^u=\mathbb{E}_{y\sim\mathcal{N}(0,I_d)}[f(x+uy)]$, and one can also control the differences $|f^u(x)-f(x)|$ and $\|\nabla f^u(x)-\nabla f(x)\|$ by controlling $u$ when $f$ is Lipschitz continuous and smooth. In other words, $\mathsf{G}_f(x;u,z)$ can be viewed as a stochastic gradient with a nonzero bias controlled by the smoothing radius $u$. By plugging this stochastic gradient into the gradient descent method, one obtains a zeroth-order optimization algorithm.

\subsection{Algorithm Design}

Our algorithm will be based on the zeroth-order gradient estimator \eqref{eq:2point_grad_est} and the stochastic mirror descent algorithm
$$
\begin{aligned}
G(t) & =
\frac{1}{n}\sum_{j=1}^n
\frac{\hat{f}_j^+(t)- \hat{f}_j^-(t)}{2u}z(t),
\quad
z(t)\sim\mathcal{N}(0,I_d), \\
x(t+1) &=
\argmin_{x\in\mathcal{X}}\!\left\{
\langle G(t),x-x(t)\rangle
+\frac{1}{\eta}\mathscr{D}_{\psi}(x|x(t))
\right\}.
\end{aligned}
$$
Here, $\hat{f}_j^\pm(t)
\coloneqq f_j(x(t) \!\pm\! uz(t))+\varepsilon^\pm_j(t)$ represent the observed local cost values, where $\varepsilon^{+}_j(t)$ and $\varepsilon^{-}_j(t)$ are the independent additive random noises with variance bounded above by $\sigma^2$ (setting $\sigma^2=0$ reduces to the noiseless case); $\mathscr{D}_\psi(x|y)
\coloneqq \psi(x)-\psi(y)-\langle\psi(y),x-y\rangle
$ is the Bregman divergence associated with the function $\psi$ that is convex and continuously differentiable~\citep{beck2003mirror}.

In our multi-agent setting, we let $\psi$ be given by
$
\psi(x^1,\ldots,x^n) = \sum_{i=1}^n \psi_i(x^i),
$
and require each $\psi_i$ to be $1$-strongly convex. Then since $\mathcal{X}=\prod_{i}\mathcal{X}_i$, we observe that the mirror descent iteration can be decoupled among agents as follows:
\begin{equation}\label{eq:mirror_descent_multi_agent}
x^i(t+1)
=\argmin_{x^i\in\mathcal{X}_i}
\left\{
\langle G^i(t), x^i-x^i(t)\rangle
+\frac{1}{\eta}\mathscr{D}_{\psi_i}(x^i|x^i(t))
\right\},
\end{equation}
where $G^i(t)$ is a zeroth-order estimate of the partial gradient $\nabla^i f(x(t))$ given by
\begin{equation}\label{eq:partial_grad_est_prototpye}
G^i(t) = \frac{1}{n}\sum_{j=1}^n
\frac{\hat{f}_j^+(t)-
\hat{f}_j^-(t)}{2u} z^i(t),
\qquad z^i(t)\sim\mathcal{N}(0,I_{d_i}).
\end{equation}
We can see that employing $\mathcal{N}(0,I_d)$ as the distribution of the perturbation direction $z$ allows the agents to generate their associated subvectors $z^i$ independently of each other without resorting to coordination strategies. However, we also notice the following two issues when adopting \eqref{eq:mirror_descent_multi_agent} and \eqref{eq:partial_grad_est_prototpye} in our setting:
\begin{enumerate}
\item In our setting, the agents can only explore their local cost values within the set $\mathcal{X}$. However, the distribution $\mathcal{N}(0,I_d)$ is supported on the whole space $\mathbb{R}^d$, and consequently $x(t)+uz(t)$ or $x(t)-uz(t)$ may not be in $\mathcal{X}$, the domain of the objective function $f$, unless $\mathcal{X}=\mathbb{R}^d$.

\item The computation of \eqref{eq:partial_grad_est_prototpye} requires agent $i$ to collect the differences of observed function values $f_j^+(t)-f_j^-(t)$ of all agents $j$. While each agent can observe its own local cost, other agents' local cost information has to be transmitted via the communication network, which will result in delays.
\end{enumerate}

We now discuss how to handle these two issues.

\subsubsection{Sampling within the Constraint Set}

Our idea of dealing with the first issue is to slightly modify the distribution of the perturbation direction $z$ so that i) $x(t)\pm u(z)$ always lies in $\mathcal{X}$, ii) each agent can still generate their associated $z^i$ independently, and iii) the resulting zeroth-order gradient estimator has comparable bias and variance with the original estimator \eqref{eq:2point_grad_est}.

\begin{figure}[t]
    \centering
    \includegraphics[width=.98\textwidth]{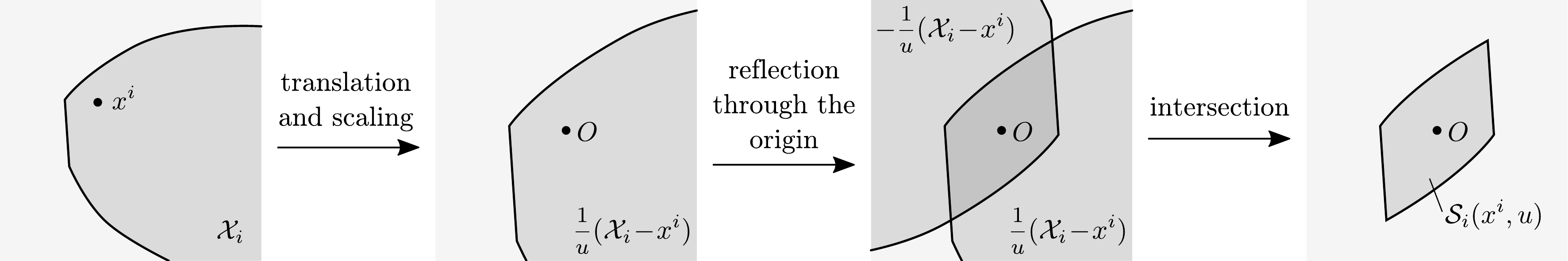}
    \caption{Construction of the set $\mathcal{S}_i(x^i,u).$}
    \label{fig:set_Si}
\end{figure}

We introduce the set
$$
\mathcal{S}_i(x^i,u) \coloneqq \frac{1}{u}\left(\mathcal{X}_i-x^i\right)\cap\left(-\frac{1}{u}\left(\mathcal{X}_i-x^i\right)\right),
\qquad x^i\in\operatorname{int}\mathcal{X}_i,u>0.
$$
See Figure~\ref{fig:set_Si} for an illustrative description. Obviously $\mathcal{S}_i(x^i,u)$ is a closed convex set with a nonempty interior, and satisfies $z^i\in\mathcal{S}_i(x^i,u)\Longleftrightarrow -z^i\in\mathcal{S}_i(x^i,u)$. Moreover, $x^i+uz^i\in\mathcal{X}_i$ and $x^i-uz^i\in\mathcal{X}_i$ for any $z\in\mathcal{S}_i(x^i,u)$. Therefore we propose to generate $z^i$ for each agent $i$ by
\begin{equation}\label{eq:sampling_z}
z^i = \mathcal{P}_{\mathcal{S}_i(x^i,u)}
\!\left[\tilde{z}^i\right],
\qquad 
\tilde{z}^i\sim\mathcal{N}(0,I_{d_i}).
\end{equation}
The resulting distribution of $z^i$ and $z$ will be denoted by $\mathcal{Z}^i(x^i,u)$ and $\mathcal{Z}(x,u)$ respectively. Note that $\mathcal{Z}(x,u)=\mathcal{N}(0,I_d)$ if $\mathcal{X}=\mathbb{R}^d$.

The modified zeroth-order partial gradient estimator for agent $i$ is then given by
\begin{equation}\label{eq:partial_grad_est_prototpye_modified}
G^i(t)
=\frac{1}{n}\sum_{j=1}^n\frac{\hat{f}_j^+(t)-\hat{f}_j^-(t)}{2u}z^i(t),
\qquad z^i(t)\sim \mathcal{Z}^i(x^i(t),u).
\end{equation}
In order for $G^i(t)$ to have comparable statistics with the original estimator \eqref{eq:2point_grad_est}, we require $\mathcal{S}_i(x^i(t),u)$ to contain a ball with a sufficiently large radius, 
so that projection happens rarely. This further leads to the requirement that there should be sufficient distance from $x^i(t)$ to the boundary of $\mathcal{X}_i$.
{\color{black}
In order for $x^i(t)$ to satisfy this requirement, we modify the mirror descent step as in \citet{flaxman2004online,agarwal2010optimal}:
\begin{equation}\label{eq:mirror_descent_multi_agent_modified}
x^i(t+1)
=\argmin_{x^i\in(1-\delta)\mathcal{X}_i}
\left\{
\langle G^i(t), x^i-x^i(t)\rangle
+\frac{1}{\eta}\mathscr{D}_{\psi_i}(x^i|x^i(t))
\right\},
\end{equation}
where $\delta>0$ is an algorithmic parameter that is to be determined later. In other words, we shrink the feasible set to be $(1-\delta)\mathcal{X}$, so that a band along the boundary of $\mathcal{X}$ will be available for the sampling of $z(t)$. Indeed, \citet[Observation 3.2]{flaxman2004online} shows that, for $\delta\in(0,1)$ and any $x^i\in(1-\delta)\mathcal{X}^i$, we have $x^i+\delta\underline{r}_i\mathbb{B}_{d_i}\subseteq\mathcal{X}_i$ (recall that $\underline{r}_i$ has been defined in Assumption~\ref{assumption:1}), i.e., the distance from $x^i$ to the boundary of $\mathcal{X}_i$ is at least $\delta\underline{r}_i$. Moreover, $x^i+\delta\underline{r}_i\mathbb{B}_{d_i}\subseteq\mathcal{X}_i$ also implies that
$$
\frac{\delta\underline{r}_i}{u}\mathbb{B}_{d_i}
\subseteq\mathcal{S}_i(x^i,u).
$$
Consequently, if let $\delta\underline{r}_i/u$ to be sufficiently large (say $\gtrsim 3\sqrt{d_i}$), then the set $\mathcal{S}_i(x^i,u)$ will correspondingly contain a sufficiently large ball, meaning that projection happens rarely when we sample $z^i$ by~\eqref{eq:sampling_z}.
}

\subsubsection{Collecting Data from Other Agents}
\label{subsec:maintain_table}

In order that each agent can obtain the difference $\hat{f}_j^+(t)-\hat{f}_j^-(t)$ of all other agents as soon as possible, we develop a procedure for generating, distributing and utilizing the most up-to-date information among agents via the network. This procedure consists of the following parts:
\begin{enumerate}
\item Generating new data: At time step $t$, each agent $i$ generates $z^i(t)\sim\mathcal{Z}^i(x^i(t),u)$, adjusts its actions to be $x^i(t)\pm uz^i(t)$ and observes the corresponding local costs $\hat{f}_i^\pm(t)=f_i(x(t) \!\pm\! uz(t))+\varepsilon_i^\pm(t)$. Agent $i$ then computes
\begin{align*}
D_i^i(t) \coloneqq \frac{\hat{f}_i^+(t) - \hat{f}_i^-(t)}{2u},
\end{align*}
and also records the timestamp $\tau^i_i(t)=t$ at which the data $D_i^i(t)$ is generated. This pair of newly-generated data $(D_i^i(t),\tau^i_i(t))$ is going to be distributed via the communication network among agents.

\item Distributing and updating other agents' information: Each agent $i$ maintains {\color{black}a $2\times n$ array that records} the most up-to-date information on the difference quotients of all $f_j$ at each time step $t$:
\begin{equation}\label{eq:table_Dij_tauij}
\arraycolsep=5pt
\begin{array}{r|c|c|c|c|}
\cline{2-5}
\text{difference quotient } & D^i_1(t) & D^i_2(t) & \cdots & D^i_n(t)  \\
\cline{2-5}
\text{time instant } &
\tau^i_1(t) & \tau^i_2(t) & \cdots & \tau^i_n(t) \\
\cline{2-5}
\end{array}
\end{equation}
Here the quantity $D_j^i(t)$ records agent $i$'s most up-to-date value of the difference quotient
$\mfrac{\hat{f}_j^+(\tau) - \hat{f}_j^-(\tau) }{2u}$, and the quantity $\tau_j^i(t)$ records the time step at which $D_j^i(t)$ was generated by agent $j$. In other words, 
\begin{align*}
D_j^i(t) = D_j^j(\tau_j^i(t))
= \frac{\hat{f}_j^+(\tau_j^i(t)) - \hat{f}_j^-(\tau_j^i(t))}{2u}.
\end{align*}

Note that the entries $D^i_i(t)$ and $\tau^i_i(t)$ in the array~\eqref{eq:table_Dij_tauij} will be updated by agent $i$ itself following the previous part. In order to update other entries in \eqref{eq:table_Dij_tauij} at time $t$, each agent $i$ first collects data that has been sent by its neighbors in the previous time step, to get their versions of the array~\eqref{eq:table_Dij_tauij}. We use $({D}_j^{k\scshortrightarrow i}(t), {\tau}_j^{k\scshortrightarrow i}(t))$ to denote the entries of the array on the difference quotient of $f_j$ that agent $i$ has received from its neighbor $k$ at time $t$. In the situation when agent $i$ does not receive the array from agent $k$ at time $t$, we let $({D}_j^{k\scshortrightarrow i}(t), {\tau}_j^{k\scshortrightarrow i}(t))=({D}^i_j(t-1), {\tau}_j^i(t-1))$. Then for each $j\neq i$, agent~$i$ compares all collected ${\tau}_j^{k\scshortrightarrow i}(t)$ and finds the neighbor $k^i_j(t)$ that has sent the largest ${\tau}_j^{k\scshortrightarrow i}(t)$, i.e.,
$$
k^i_j(t)
=\argmax_{k:(k,i)\in\mathcal{E}}
\ \tau_j^{k\scshortrightarrow i}(t).
$$
In other words, the difference quotient of $f_j$ sent by the neighbor $k^i_j(t)$ is the most up-to-date among all of agent $i$'s neighbors. We then update $(D_j^i(t), \tau_j^i(t))$ to be equal to the data sent by the neighbor $k^i_j(t)$.

Finally, after agent $i$ finishes updating the array~\eqref{eq:table_Dij_tauij}, it sends this array to all of its neighbors.

Each agent initializes the array~\eqref{eq:table_Dij_tauij} by setting $D^i_j(-1)=0$ and $\tau^i_j(-1)=-1$.

\item Constructing partial gradient estimator with delayed information: Each agent $i$ calculates the partial gradient estimator \eqref{eq:partial_grad_est_prototpye_modified} but with delayed information. Specifically,
\begin{equation}\label{eq:partial_grad_est_final}
G^i(t)
=\frac{1}{n}\sum_{j=1}^n D^i_j(t)\,z^i(\tau^i_j(t) ),
\end{equation}
where $z^i(t)\sim \mathcal{Z}^i(x^i(t),u)$ for each $t$, and the past perturbation direction $z^i(\tau^i_j(t))$ is used to pair with the delayed information $D^i_j(t)$ for $j\neq i$. The mirror descent step \eqref{eq:mirror_descent_multi_agent_modified} is then applied to obtain $x^i(t\!+\!1)$.
\end{enumerate}

We further elaborate on this procedure and the communication delays therein: Assuming that each round of communication takes one time step and there are no additional delays for all $t$, we see that agent $i$'s received data $\big({D}_j^{k\scshortrightarrow i}(t), {\tau}_j^{k\scshortrightarrow i}(t)\big)$ will be just $(D_j^k(t\!-\!1), \tau_j^k(t\!-\!1))$. As a result, it takes exactly $b_{ij}$ communication rounds to transmit data from agent $j$ to agent $i$ (recall that $b_{ij}$ is the distance between $i$ and $j$ in $\mathcal{G}$), and consequently $\tau_j^i(t) = t - b_{ij}$ and $D_j^i(t)=D_j^j(t-b_{ij})$ for $t \geq b_{ij}$. On the other hand, if some additional delay occurs during communication, then agent $i$ may fail to receive new data from some neighbor $k$ at some time step $t$, and in this case $\tau^i_j(t)$ may be smaller than $t-b_{ij}$. In Section~\ref{sec:analysis}, we shall see that as long as the additional delays during communication are bounded, our algorithm will still work with performance guarantees.

\subsection{Our Proposed Algorithm}

{
\begin{figure}[t]
\begin{algorithm2e}[H]
\caption{Zeroth-order Feedback Optimization (ZFO) for cooperative multi-agent systems}
\label{alg:alg_zero_order}
\DontPrintSemicolon
\SetAlgoNoLine
\SetKwInOut{Require}{Require}
\Require{step size $\eta > 0$, smoothing radius $u  > 0$, number of iterations $T$, initial action profile $(x_0^1,\ldots,x^n_0)$}
\textbf{Initialize:} $x^i(0) = x_0^i, D_j^i(-1) = 0, \tau_j^i(-1) = -1$ for all $i, j = 1,\dots, n.$\;
\For{$t = 0,\dots, T-1$}{
Each agent $i$ generates $z^i(t) \sim \mathcal{Z}^i(x^i(t),u)$ according to \eqref{eq:sampling_z}.\;
Each agent $i$ takes action $x^i(t) \!+\! u z^i(t)$ and observes its local cost $\hat f_i^+(t)$.\;
Each agent $i$ takes action $x^i(t) \!-\! uz^i(t)$ and observes its local cost $\hat f_i^-(t)$.\;
Agent $i$ computes and records
\begin{align*}
    D_i^i(t) & = \dfrac{\hat f_i^+(t)-\hat f_i^-(t)}{2u}, \quad
    \tau_i^i(t) = t. 
\end{align*}\;\vspace{-16pt}

Agent $i$ receives data $\big( {D}^{k\scshortrightarrow i}_j(t),{\tau}^{k\scshortrightarrow i}_j(t)\big)_{j=1}^n$ from each neighbor $k:(k,i)\in\mathcal{E}$, and updates
$$
{k}_j^i(t) = \argmax_{k: (k,i) \in \mathcal{E}} {\tau}_j^{k\scshortrightarrow i}(t),
\quad
\tau_j^i(t) = \tau_j^{{k}_j^i\!(t)
\scshortrightarrow i}(t),
\quad D_j^i(t) = D_j^{{k}_j^i\!(t)\scshortrightarrow i}(t)
$$
for each $j\neq i$.\;
    
    Agent $i$ sends $\left(D^i_j(t),\tau^i_j(t)\right)_{j=1}^n$ to its neighbors in the network.\;

Agent $i$ updates\begin{align}
    &G^i(t) = \dfrac{1}{n}\sum_{j=1}^n  D_j^i(t) \,z^i(\tau_j^i(t)), \tag{\ref{eq:partial_grad_est_final}}\\
        &x^i(t+1) =
        \argmin_{x^i\in(1-\delta)\mathcal{X}_i}
        \left\{
        \langle G^i(t),x^i-x^i(t)\rangle
        +\frac{1}{\eta} \mathscr{D}_{\psi_i}(x^i|x^i(t))\right\}.\nonumber
    \end{align}\;\vspace{-36pt}
}
\end{algorithm2e}
\end{figure}
}

After we resolve the two issues, we are ready to present our multi-agent Zeroth-order Feedback Optimization (ZFO) algorithm. 
Our proposed algorithm is presented in Algorithm~\ref{alg:alg_zero_order}.
{\color{black}
In summary, each iteration of Algorithm~\ref{alg:alg_zero_order} consists of the following steps:
\begin{enumerate}
\item Each agent $i$ generates the associated random perturbation $z^i(t)$ following the distribution $\mathcal{Z}^i(x^i(t),u)$ by~\eqref{eq:sampling_z} (Line 3).
\item Each agent takes the two perturbed actions $x^i(t)\pm uz^i(t)$ successively and observes the corresponding local cost values (Lines 4--5). Note that we require the agents to take each of the two perturbed actions synchronously.
\item Based on the new cost values, each agent $i$ computes the difference quotient $D^i_i(t)$ of its own cost function and records the current time instant $\tau^i_i(t)=t$ (Line 6).
\item Based on the information received from the neighbors, each agent updates other columns of its array~\eqref{eq:table_Dij_tauij} by the procedure described in Section~\ref{subsec:maintain_table} (Line 7).
\item Finally, each agent sends the updated array~\eqref{eq:table_Dij_tauij} to its neighbors (Line 8) and performs stochastic mirror descent (Line 9).
\end{enumerate}
}


\section{Complexity Results}\label{sec:analysis}

In this section, we present our main results on the complexity of Algorithm \ref{alg:alg_zero_order}.

First, we make the following assumption on the delays occurred during the optimization procedure:
\begin{assumption}
There exists $\Delta\geq 0$ such that the delays are bounded above by $t-\tau^i_j(t)\leq b_{ij}+\Delta$ for every $t\geq 0$ and $i,j=1,\ldots,n$.
\end{assumption}
We define
\begin{align}
\bar{b}
\coloneqq\ &
\left(\frac{\sum_{i,j=1}^n (b_{ij}+\Delta)^2}{n^2}\right)^{1/2}, \\
\bar{\mathfrak{b}}
\coloneqq\ &
\left(\frac{\sum_{i,j=1}^n (b_{ij}+\Delta)^2(d_i+d_j)}{\sum_{i,j=1}^n (d_i+d_j)}\right)^{1/2}
=
\left(\frac{\sum_{i,j=1}^n (b_{ij}+\Delta)^2d_i}{nd}\right)^{1/2}.
\end{align}
They are (weighted) averages of pairwise distances of nodes plus additional delay bound in the network, and roughly speaking, can be thought of as characterizing the connectivity of the network: smaller $\bar{b}$ or $\bar{\mathfrak{b}}$ indicates that the nodes are more closely connected and information can be transferred over the network with fewer hops. We also define
$$
B \coloneqq \max_{i,j} b_{ij}+\Delta.
$$

We consider two settings for the theoretical analysis:
\begin{enumerate}
\item {\bf Constrained convex setting}: Each $\mathcal{X}_i$ is a compact convex set with a nonempty interior in $\mathbb{R}^{d_i}$, each $f_i:\mathcal{X}_i\rightarrow\mathbb{R}^{d_i}$ is $G$-Lipschitz and $L$-smooth, and the global objective $f=\frac{1}{n}\sum_{i} f_i$ is convex.

We let $\overline{R}_i>0$ be such that $\mathcal{X}_i
\subseteq \overline{R}_i\mathbb{B}_{d_i}$. Consequently we have $\mathcal{X}
\subseteq \overline{R}\mathbb{B}_{d}$ where $\overline{R}=\sqrt{\sum_{i=1}^n R_i^2}$.

\item {\bf Unconstrained nonconvex setting}: $\mathcal{X}_i=\mathbb{R}^{d_i}$, and each $f_i:\mathbb{R}^d\rightarrow\mathbb{R}$ is $G$-Lipschitz and $L$-smooth, but the global objective $f$ may be nonconvex. The functions  $\psi_i$ are taken to be $\psi_i=\frac{1}{2}\|x-y\|^2$. In other words, the update \eqref{eq:mirror_descent_multi_agent_modified} takes the form of an unconstrained stochastic gradient descent iteration.
\end{enumerate}
The proofs of the results will be postponed to Section~\ref{sec:proofs}

\subsection{Complexity Results in the Constrained Convex Setting}

For the constrained convex setting, we characterize the complexity of the algorithm by the number of iterations $T$ needed to achieve $\mathbb{E}[f(\bar{x}(T))]-f(x^\ast)\leq\epsilon$ for sufficiently small $\epsilon$, where $x^\ast$ is a minimizer of $f(x)$ over $x\in\mathcal{X}$, and
$$
\bar{x}(T) = \frac{1}{T\!-\!B\!+\!1}\sum_{t=B}^{T}
x(t).
$$
Here we require the total number of iterations $T$ to be greater than or equal to $B$ to ensure that each agent $i$ has updated the entries on agent $j$ in the array~\eqref{eq:table_Dij_tauij} at least once.

The following theorems characterize the complexity results of Algorithm~\ref{alg:alg_zero_order} for the constrained convex setting. Recall that $\sigma^2$ is the variance of the additive noise on the agents' observed local cost values. We also denote $\overline{\mathscr{D}}\coloneqq \max_{x\in\mathcal{X}}\mathscr{D}_\psi(x^\ast|x)$.
\begin{theorem}[Convex, noiseless]\label{theorem:convex_noiseless}
Suppose $\sigma=0$. Let $\epsilon\in(0,\max_{x\in\mathcal{X}}f(x)-f(x^\ast)]$ be arbitrary. Then by choosing the parameters of Algorithm~\ref{alg:alg_zero_order} to satisfy
\begin{align*}
& \delta\leq \frac{\epsilon}{5G\overline{R}},
\qquad\qquad\qquad\quad\ \ 
u\cdot\sqrt{d
+\frac{4}{9}
\left[\ln
\mfrac{20G\overline{R}^2\sqrt{n}}{u\epsilon}\right]_+}
\leq\frac{\delta\underline{r}}{3},
\\
& \eta\leq
\frac{\epsilon}{32
\left[G^2\!+\!(L\overline{R}/4)^2\right]
(\bar{\mathfrak{b}}\!+\!1/2)
(\sqrt{d}\!+\!1)^2},
\qquad
T\!-\!B\!+\!1\geq
\left\lceil\frac{15\overline{\mathscr{D}}}{2\eta\epsilon}\right\rceil,
\end{align*}
we can guarantee that
$
\mathbb{E}\!\left[
f(\bar{x}(T))
\right]
-f(x^\ast)
\leq
\epsilon
$. Moreover, if all the conditions on the parameters are satisfied with equality, then $T=\Theta\!\left(\bar{\mathfrak{b}}d/\epsilon^2\right)$.
\end{theorem}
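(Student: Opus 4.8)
The plan is to treat Algorithm~\ref{alg:alg_zero_order} as a delayed zeroth-order mirror-descent scheme and run a regret-style argument, isolating four sources of error: the smoothing bias of the gradient estimator, its variance, the drift from pairing the delayed difference quotients $D^i_j(t)$ with the past perturbations $z^i(\tau^i_j(t))$, and the effect of shrinking the feasible set to $(1-\delta)\mathcal{X}$. First I would record the one-step mirror-descent inequality: by first-order optimality of \eqref{eq:mirror_descent_multi_agent_modified} and the three-point Bregman identity, for every $y\in(1-\delta)\mathcal{X}$,
\[
\langle G(t),\,x(t)-y\rangle \;\le\; \tfrac1\eta\bigl[\mathscr{D}_\psi(y|x(t)) - \mathscr{D}_\psi(y|x(t+1))\bigr] + \tfrac\eta2\|G(t)\|^2,
\]
and, from the same optimality condition together with $1$-strong convexity of each $\psi_i$, the per-step movement bound $\|x^i(t+1)-x^i(t)\|\le\eta\|G^i(t)\|$, which will feed the drift analysis.

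Next I would pin down the estimator statistics and then fold in the delays. Since $z^i(t)=\mathcal{P}_{\mathcal{S}_i(x^i(t),u)}[\tilde z^i]$ with $\tilde z^i\sim\mathcal{N}(0,I_{d_i})$ and $\mathcal{S}_i(x^i(t),u)\supseteq(\delta\underline{r}_i/u)\mathbb{B}_{d_i}$ (because $x^i(t)\in(1-\delta)\mathcal{X}_i$), the only event on which projection actually modifies $\tilde z^i$ is $\{\|\tilde z^i\|>\delta\underline{r}_i/u\}$; a Gaussian (Laurent--Massart) tail bound shows this has small probability, and the condition $u\sqrt{d+\tfrac49[\ln(20G\overline R^2\sqrt n/(u\epsilon))]_+}\le\delta\underline{r}/3$ is precisely what makes it $O(u\epsilon/(G\overline R^2\sqrt n))$. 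Off this event $z(t)$ is exactly $\mathcal{N}(0,I_d)$, so in the noiseless case each summand $D^i_j(t)z^i(\tau^i_j(t))$ is the standard two-point estimate of the $i$-block of $\nabla f_j$ at the past iterate $x(\tau^i_j(t))$ with perturbation $z(\tau^i_j(t))$; hence, conditioning on the history just before $z(\tau^i_j(t))$ was drawn, it is unbiased for $\nabla^i f_j^u(x(\tau^i_j(t)))$ up to the tail correction, and the analysis of \citet{nesterov2017random} supplies $|f_j^u-f_j|\le\tfrac{Lu^2}{2}d$, $\|\nabla f_j^u-\nabla f_j\|\le\tfrac{Lu}{2}(d+3)^{3/2}$, and second moment $O((d_i+1)(G^2+u^2L^2d^2))$. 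To transfer the estimate to the current iterate I use $\|\nabla^i f_j^u(x(\tau^i_j(t)))-\nabla^i f_j(x(t))\|\le\|\nabla f_j^u-\nabla f_j\|+L\|x(t)-x(\tau^i_j(t))\|$ and control the drift by $\|x^i(t)-x^i(\tau^i_j(t))\|^2\le\eta^2(t-\tau^i_j(t))\sum_r\|G^i(r)\|^2\le\eta^2(b_{ij}+\Delta)\sum_r\|G^i(r)\|^2$ (using the movement bound and the bounded-delay assumption); summing these drift errors over $i$ with per-coordinate variance weights scaling like $d_i$ is exactly what produces the weighted RMS delay $\bar{\mathfrak{b}}$ appearing in the $\eta$-condition.

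Then I would assemble. Summing the one-step inequality over $t=B,\dots,T$ with $y=(1-\delta)x^\ast\in(1-\delta)\mathcal{X}$, telescoping the Bregman terms (bounded by $\overline{\mathscr{D}}$), taking expectations and replacing $G(t)$ by $\nabla f(x(t))$ at the cost of the bias and drift terms above, convexity gives $\mathbb{E}\langle\nabla f(x(t)),x(t)-x^\ast\rangle\ge\mathbb{E}[f(x(t))]-f(x^\ast)$ and Jensen turns the time average into $\mathbb{E}[f(\bar x(T))]-f(x^\ast)$; replacing $x^\ast$ by $(1-\delta)x^\ast$ costs $O(\delta G\overline R)$ since $f$ is $G$-Lipschitz and $\mathcal{X}\subseteq\overline R\mathbb{B}_d$, and the $f^u$-vs-$f$ gap costs $O(Lu^2d)$. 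The result has the shape
\[
\mathbb{E}[f(\bar x(T))]-f(x^\ast)\;\lesssim\;\frac{\overline{\mathscr{D}}}{\eta(T-B+1)}+\eta\bigl(G^2+(L\overline R)^2\bigr)(\bar{\mathfrak{b}}+1)(\sqrt d+1)^2+\delta G\overline R+Lu^2d+(\text{tail}),
\]
and the five conditions in the statement are exactly the choices making each term $O(\epsilon)$. Setting every condition to equality forces $\delta\sim\epsilon$, $u\sim\epsilon/\sqrt d$, $\eta\sim\epsilon/\bigl((G^2+(L\overline R)^2)\bar{\mathfrak{b}}d\bigr)$, whence $T-B+1\sim\overline{\mathscr{D}}/(\eta\epsilon)\sim\bar{\mathfrak{b}}d/\epsilon^2$; since $B$ is a fixed network quantity dominated by this for small $\epsilon$, $T=\Theta(\bar{\mathfrak{b}}d/\epsilon^2)$.

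The main obstacle is handling the non-Gaussian perturbations and the delays together without circularity: the drift bound needs control of $\mathbb{E}\|G^i(r)\|^2$, which needs the variance bound, which needs the rare-projection estimate, which needs $\delta\underline{r}/u$ to be large — but all of $\delta,u,\eta$ are fixed a priori, so these implications close in a single pass. The other delicate point is conditioning each delayed summand at the correct, possibly random but bounded, time $\tau^i_j(t)$ so that it is conditionally unbiased toward a \emph{past} smoothed partial gradient, and then bookkeeping the $d_i$-weights carefully enough that the network dependence which emerges is the sharp $\bar{\mathfrak{b}}$ rather than a cruder $B\sqrt n$.
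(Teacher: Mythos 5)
Your overall architecture — one-step mirror-descent inequality, the movement bound $\|x^i(t+1)-x^i(t)\|\le\eta\|G^i(t)\|$, the split of the error into smoothing bias, second moment, delay-induced drift weighted by $d_i$ (yielding $\bar{\mathfrak{b}}$), and the $O(\delta G\overline{R})$ cost of projecting $x^\ast$ onto $(1-\delta)\mathcal{X}$, followed by telescoping and Jensen — is the same as the paper's, and your parameter accounting at the end is right. There is, however, one concrete gap in the middle step.

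You assert that, off the rare-projection event, the delayed summand is conditionally unbiased for $\nabla^i f_j^u$ ``up to the tail correction,'' where $f_j^u$ is Nesterov's Gaussian-smoothed surrogate, and you then import the Gaussian-smoothing estimates $|f_j^u-f_j|\le \tfrac{1}{2}Lu^2 d$, etc. This does not go through as stated: each $f_j$ is defined only on the compact set $\mathcal{X}$, so $f_j^u(x)=\mathbb{E}_{y\sim\mathcal{N}(0,I_d)}[f_j(x+uy)]$ is not even well defined, and the discrepancy between $\mathbb{E}_{z\sim\mathcal{Z}(x,u)}[\mathsf{G}_{f_j}(x;u,z)]$ and the full Gaussian integral cannot be bounded by a tail probability alone because the integrand on the tail involves evaluations of $f_j$ outside $\mathcal{X}$. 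What is actually true (and is the main technical lemma the paper has to build for this setting) is that the conditional mean equals $\kappa(u)\nabla h^u(x)$ up to an exponentially small error, where $h^u(x)=\mathbb{E}_{y\sim\mathcal{Y}(u)}[h(x+uy)]$ for a \emph{compactly supported} isotropic distribution $\mathcal{Y}(u)$ obtained by truncating and renormalizing the Gaussian, and $\kappa(u)\in[199/200,1]$ is the associated normalization. One must then re-verify for this surrogate the properties your argument silently uses: $h^u$ is convex with $h^u\ge h$, is $G$-Lipschitz and $L$-smooth, satisfies $|h^u-h|\le\min\{uG\sqrt{d},\tfrac{1}{2}u^2Ld\}$, and the multiplicative factor $\kappa(u)$ must be carried through the telescoping (the paper divides the whole recursion by $\kappa(u)$ and uses its lower bound). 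A second, smaller caution: if you were to run the transfer via $\|\nabla f_j^u-\nabla f_j\|\le\tfrac{Lu}{2}(d+3)^{3/2}$ paired with $\|x-x^\ast\|\le 2\overline{R}$, the resulting term scales like $Lu\,d^{3/2}\overline{R}\sim L\epsilon\,d/(G)$ under $u\sim\delta\underline{r}/(3\sqrt{d})$ and would ruin the $\Theta(\bar{\mathfrak{b}}d/\epsilon^2)$ rate; you must instead use convexity of the surrogate together with the function-value gap $|h^u-h|$, as your final display suggests you intend. With the truncated-smoothing lemma supplied, the rest of your plan closes.
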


\begin{theorem}[Convex, noisy]\label{theorem:convex_noisy}
Suppose $\sigma>0$, and let $\epsilon>0$ be sufficiently small.
\begin{enumerate}
\item By choosing the parameters of Algorithm~\ref{alg:alg_zero_order} to satisfy
\begin{equation}\label{eq:convex_noisy_cond_delta_case1}
\delta\leq\frac{\epsilon}{5G\overline{R}}
\end{equation}
and
\begin{equation}\label{eq:convex_noisy_cond}
u\cdot\sqrt{d
\!+\!\frac{4}{9}
\left[\ln
\mfrac{20G\overline{R}^2\sqrt{n}}{u\epsilon}\right]_+}
\leq\frac{\delta\underline{r}}{3},
\ \ \ \ 
\eta\leq
\frac{3u^2\epsilon}{8
\sigma^2
(\bar{\mathfrak{b}} \!+\! 1/2)
(\sqrt{d} \!+\! 1)^2},
\ \ \ \ 
T\!-\!B\!+\!1\geq
\left\lceil\frac{15\overline{\mathscr{D}}}{2\eta\epsilon}\right\rceil,
\end{equation}
we can guarantee that $
\mathbb{E}\!\left[f(\bar{x}(T))\right]
-f(x^\ast)\leq\epsilon
$. Moreover, if all the conditions on the parameters are satisfied with equality, then
$$
T =
\Theta\!\left(
\frac{\bar{\mathfrak{b}}(d^2+d\ln(1/\epsilon))}{\epsilon^4}
\right).
$$

\item Suppose it is known that $x^\ast\in\operatorname{int}\mathcal{X}$. By choosing the parameters of Algorithm~\ref{alg:alg_zero_order} to satisfy
\begin{equation}\label{eq:convex_noisy_cond_delta_case2}
\delta\leq \frac{\sqrt{\epsilon}}{\overline{R}\sqrt{2L}}
\end{equation}
and \eqref{eq:convex_noisy_cond}, we can guarantee that $
\mathbb{E}\!\left[
f(\bar{x}(T))
\right]
-f(x^\ast)
\leq
\epsilon
$. Moreover, if all the conditions on the parameters are satisfied with equality, then
$$
T= 
\Theta\!\left(
\frac{\bar{\mathfrak{b}}(d^2+d\ln(1/\epsilon))}{\epsilon^3}
\right).
$$
\end{enumerate}
\end{theorem}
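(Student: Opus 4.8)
The plan is to carry out the standard stochastic mirror-descent regret analysis for the iterates generated on the shrunk feasible set $(1-\delta)\mathcal{X}$, treating the estimator $G^i(t)$ from \eqref{eq:partial_grad_est_final} as a stochastic gradient that is simultaneously biased (by the smoothing radius $u$ and by the truncation of the Gaussian to $\mathcal{S}_i(x^i(t),u)$ in \eqref{eq:sampling_z}), delayed (it pairs $D^i_j(t)$ with the stale perturbation $z^i(\tau^i_j(t))$), and --- this is the only genuinely new feature relative to Theorem~\ref{theorem:convex_noiseless} --- of large second moment, of order $\sigma^2/u^2$, because each difference quotient $D^i_j(t)$ carries two independent additive noises of variance at most $\sigma^2$, divided by $(2u)^2$. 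Items (1) and (2) will run the same argument and differ only in the estimate of the gap created by shrinking $\mathcal{X}$: writing $y^\ast=(1-\delta)x^\ast\in(1-\delta)\mathcal{X}$, in item (1) we bound $f(y^\ast)-f(x^\ast)\le G\delta\|x^\ast\|\le G\delta\overline R\le\epsilon/5$ using $G$-Lipschitzness, whereas in item (2), since $x^\ast\in\operatorname{int}\mathcal{X}$ forces $\nabla f(x^\ast)=0$, $L$-smoothness gives the sharper $f(y^\ast)-f(x^\ast)\le\tfrac{L}{2}\delta^2\|x^\ast\|^2\le\tfrac{L}{2}\delta^2\overline R^2\le\epsilon/4$ --- which is exactly why there $\delta$ may be of order $\sqrt\epsilon$ rather than $\epsilon$, saving a factor of $\epsilon$ in $T$.

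\textbf{Step 1 (reduction).} By convexity of $f$ and Jensen, $\mathbb{E}[f(\bar x(T))]-f(y^\ast)\le\tfrac{1}{T-B+1}\sum_{t=B}^{T}\mathbb{E}\langle\nabla f(x(t)),x(t)-y^\ast\rangle$, so it suffices to control the averaged linearized regret against $y^\ast$ and then add the shrinkage gap. \textbf{Step 2 (one-step inequality).} The update \eqref{eq:mirror_descent_multi_agent_modified} and $1$-strong convexity of each $\psi_i$ give $\langle G^i(t),x^i(t)-y^{\ast,i}\rangle\le\tfrac1\eta\big(\mathscr{D}_{\psi_i}(y^{\ast,i}|x^i(t))-\mathscr{D}_{\psi_i}(y^{\ast,i}|x^i(t+1))\big)+\tfrac\eta2\|G^i(t)\|^2$; summing over $i$ and telescoping over $t=B,\dots,T$ yields $\sum_{t=B}^{T}\langle G(t),x(t)-y^\ast\rangle\le\tfrac{\overline{\mathscr{D}}}{\eta}+\tfrac\eta2\sum_{t=B}^{T}\|G(t)\|^2$. \textbf{Step 3 (bias decomposition).} Split $\langle\nabla f(x(t)),x(t)-y^\ast\rangle$ into $\langle G(t),x(t)-y^\ast\rangle$, a martingale term with zero conditional mean, and $\langle\nabla f(x(t))-\mathbb{E}_t[G(t)],x(t)-y^\ast\rangle$. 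For the last one, a Stein-type integration-by-parts identity (which correctly accounts for the cross-agent coupling, since $f_j$ may depend on $x^i$) shows $\mathbb{E}_t[G^i(t)]=\tfrac1n\sum_j\nabla^i f_j^u(x(\tau^i_j(t)))+e^i_{\mathrm{trunc}}(t)$; the deviation of $\nabla^i f^u$ at the delayed point from its value at $x(t)$ is controlled by $L\|x(t)-x(\tau^i_j(t))\|$, which by \eqref{eq:mirror_descent_multi_agent_modified} is at most $\eta$ times the number of skipped steps (bounded by $b_{ij}+\Delta$) times a uniform bound on the step directions; the smoothing bias $\|\nabla f^u-\nabla f\|$ is of order $u$; and $e^i_{\mathrm{trunc}}(t)$ is governed by $\mathbb{P}[\tilde z^i\notin\mathcal{S}_i(x^i(t),u)]$, which via the inclusion $\tfrac{\delta\underline r_i}{u}\mathbb{B}_{d_i}\subseteq\mathcal{S}_i(x^i(t),u)$ and the hypothesis on $u$ is made $\lesssim u\epsilon/(G\overline R^2\sqrt n)$, so the induced error is $\lesssim\epsilon$. \textbf{Step 4 (second moment).} Using the truncated-Gaussian moment bounds, $\mathbb{E}_t\|G(t)\|^2$ is $O(G^2 d)$ plus an $O(\sigma^2/u^2)$ term with the appropriate dimension-and-delay factors; since $u$ is small the noise part dominates and one gets $\tfrac12\sum_t\mathbb{E}\|G(t)\|^2\lesssim(T-B+1)\,\sigma^2(\bar{\mathfrak b}+1/2)(\sqrt d+1)^2/u^2$.

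Combining Steps 1--4, $\mathbb{E}[f(\bar x(T))]-f(x^\ast)$ is bounded by the shrinkage gap, plus $\tfrac{\overline{\mathscr{D}}}{\eta(T-B+1)}$, plus $\tfrac{\eta}{2(T-B+1)}\sum_t\mathbb{E}\|G(t)\|^2$, plus the smoothing, truncation and delay biases; the stated choices of $\delta$, $u$, $\eta$ and $T$ make each of these at most a constant fraction of $\epsilon$. Substituting the conditions at equality gives $\eta\propto u^2\epsilon/(\sigma^2\bar{\mathfrak b}d)$ and $u^2\propto\delta^2\underline r^2/(d+\ln(1/\epsilon))$, hence $T\propto\tfrac{1}{\eta\epsilon}\propto\bar{\mathfrak b}d(d+\ln(1/\epsilon))/(\delta^2\epsilon^2)$, which is $\Theta\!\big(\bar{\mathfrak b}(d^2+d\ln(1/\epsilon))/\epsilon^4\big)$ when $\delta\propto\epsilon$ (item 1) and $\Theta\!\big(\bar{\mathfrak b}(d^2+d\ln(1/\epsilon))/\epsilon^3\big)$ when $\delta\propto\sqrt\epsilon$ (item 2). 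I expect the delay-bias term of Step 3 to be the main obstacle: one must bound $\|x(t)-x(\tau^i_j(t))\|$ uniformly, feed it back into the double sum over $i,j$ and over $t$, and track the dimension weights coming from $\|z^i\|\approx\sqrt{d_i}$ and the radii $\overline R_i$ so that the pairwise quantities $(b_{ij}+\Delta)^2$ aggregate precisely into $\bar{\mathfrak b}$ rather than the cruder $B$. A secondary technical point is the concentration lemma for the truncated law $\mathcal{Z}^i(x^i,u)$ that converts the ball-inclusion $\tfrac{\delta\underline r_i}{u}\mathbb{B}_{d_i}\subseteq\mathcal{S}_i$ into quantitative control of both $e^i_{\mathrm{trunc}}$ and the gap between the second moment of $z^i$ and $d_i$; requiring the ball radius to exceed roughly $\sqrt{d_i+\ln(1/\epsilon)}$ is what produces the $\ln(1/\epsilon)$ term in the parameter conditions.
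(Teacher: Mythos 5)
Your proposal is correct and follows essentially the same route as the paper: the mirror-descent/telescoping skeleton, the truncated-Gaussian bias lemma, the $\sigma^2/u^2$ second-moment term, the delay bound $\|x(t)-x(\tau^i_j(t))\|\le\eta\sum\|G\|$ aggregated into $\bar{\mathfrak b}$, and the two shrinkage-gap estimates ($G\overline R\delta$ versus $\tfrac L2\overline R^2\delta^2$ when $\nabla f(x^\ast)=0$) are exactly the ingredients of Theorem~\ref{theorem:convex_main} and its specialization. The only place your sketch is loose is the ``martingale term with zero conditional mean'': because the delayed quotients $D_j(\tau^i_j(t))$ are already realized at time $t$, the identity $\mathbb{E}_t[G^i(t)]\approx\kappa(u)\tfrac1n\sum_j\nabla^i f_j^u(x(\tau^i_j(t)))$ only holds when conditioning at the generation times $\tau^i_j(t)$, at which $x(t)$ is not measurable — which forces the split of $\tilde x^i-x^i(t)$ at the delayed iterate (the paper's Lemmas~\ref{lemma:mirror_descent_temp_term1} and \ref{lemma:mirror_descent_temp_term2}) and is precisely where the dominant $\eta\sigma^2\bar{\mathfrak b}d/u^2$ contribution arises; you correctly identify this as the main obstacle and the right fix, so it is a presentational gap, not a substantive one.
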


We now provide some discussion on the two theorems:
\begin{enumerate}
\item {\bf Existence of $u$}. Observe that the map
$$
u\mapsto
u\cdot\sqrt{d
+\frac{4}{9}
\left[\ln
\mfrac{20G\overline{R}^2\sqrt{n}}{u\epsilon}\right]_+}
$$
is continuous over $u\in(0,+\infty)$, goes to $0$ as $u\rightarrow 0^+$ and diverges to $+\infty$ as $u\rightarrow+\infty$. Therefore given $\epsilon>0$ and $\delta>0$, there always exists some $u\in(0,+\infty)$ that satisfies the conditions in Theorems~\ref{theorem:convex_noiseless} and \ref{theorem:convex_noisy}, and the condition can be achieved with equality.

\item {\bf Complexity bound for the convex noiseless case.} It can be seen that in the convex noiseless case, the number of iterations needed for Algorithm~\ref{alg:alg_zero_order} to achieve $\mathbb{E}[f(\bar{x}(T))]-f(x^\ast)$ is on the order of $O(\bar{\mathfrak{b}}d/\epsilon^2)$. The $d/\epsilon^2$ part is in accordance with the centralized zeroth-order method~\citep{nesterov2017random}.

Equivalently, the convergence rate of Algorithm~\ref{alg:alg_zero_order} can be represented as
$$
\mathbb{E}[f(\bar{x}(t))]-f(x^\ast)\leq O\left(\sqrt{\frac{\bar{\mathfrak{b}}d}{T}}\right).
$$

\item  {\bf Complexity bound for the convex noisy case.} For the convex noisy case, depending on whether or not we know an optimizer $x^\ast$ lies in the interior of the feasible set $\mathcal{X}$, the complexity bound of Algorithm~\ref{alg:alg_zero_order} can be different. Specifically, if we know $x^\ast\in\operatorname{int}\mathcal{X}$, then the complexity has $O(\epsilon^{-3}\ln(1/\epsilon))$ dependence on $\epsilon$ and $O(d^2)$ dependence on the problem dimension $d$; they are in accordance with the centralized case in \citet{bach2016highly} except for a logarithmic dependence on $1/\epsilon$. On the other hand, if we don't know $x^\ast\in\operatorname{int}\mathcal{X}$, the complexity bound becomes worse in terms of the dependence on $\epsilon$. Here we provide a qualitative explanation of this difference: If one knows $x^\ast\in\operatorname{int}\mathcal{X}$, then $\nabla f(x^\ast)=0$, and by the smoothness of the objective function, we have $f(x)-f(x^\ast)\sim O(\|x-x^\ast\|^2)$, implying that the suboptimality caused by shrinking into a smaller set $(1-\delta)\mathcal{X}$ is on the order of $O((1-\delta)^2)$. Therefore, one can shrink the feasible set more aggressively, allowing a larger smoothing radius $u$ that does not amplify the noise much, and consequently the number of iterations can be reduced. On the other hand, if we don't have $\nabla f(x^\ast)=0$, then only $f(x)-f(x^\ast)\sim O(\|x-x^\ast\|)$ can be guaranteed by the Lipschitz continuity of the objective function, and the suboptimality caused by shrinkage is on the order of $O(1-\delta)$. Therefore the set $(1-\delta)\mathcal{X}$ needs to be sufficiently large to make sure that the suboptimality caused by shrinkage is small, resulting in more restricted size of the smoothing radius. Consequently, the additive noise in the gradient estimator can be more severely amplified, and one needs more iterations to average out the noise.


\item {\bf Dependence on the network connectivity.} We can see that the complexity bounds of both the noiseless and noisy cases has an addition factor $\bar{\mathfrak{b}}$. This term reflects the influence of the connectivity of the communication network, and suggests that Algorithm \ref{alg:alg_zero_order} is able to scale reasonably with the size of the network.
\end{enumerate}

\subsection{Complexity Results in the Unconstrained Nonconvex Setting}

When working in the unconstrained nonconvex setting, the commonly used metrics of optimal gaps in convex optimization (e.g., $f(x(T))-f^\ast$ or $\|x(T)-x^\ast\|$) are not eligible for complexity analysis unless further assumptions on the problem are imposed. Instead, we consider bounding the number of iterations $T$ needed to achieve
$$
\mathbb{E}\!\left[
\frac{1}{T\!-\!B\!+\!1}\sum_{t=B}^T
\|\nabla f(x(t))\|^2
\right]
\leq\epsilon
$$
for sufficiently small $\epsilon>0$, i.e., we consider ergodic convergence that averages the expected squared norms of the gradients. The averaged squared norm of the gradient has been commonly adopted in the complexity or convergence analysis of gradient-based algorithms for smooth nonconvex optimization~\citep{ghadimi2013stochastic,reddi2016stochastic}. Again, we require $T\geq B$ to ensure that each agent $i$ has updated the entries on every agent $j$ in the array~\eqref{eq:table_Dij_tauij} at least once.

The following theorems summarize the complexity results of Algorithm~\ref{alg:alg_zero_order} in the unconstrained nonconvex setting. We denote $f^\ast=\inf_{x\in\mathbb{R}^d}f(x)$ and assume $f^\ast>-\infty$.

\begin{theorem}[Noiseless, nonconvex]\label{theorem:nonconvex_noiseless}
Suppose $\sigma=0$,
and let $\epsilon>0$ be sufficiently small. By choosing the parameters of Algorithm~\ref{alg:alg_zero_order} to satisfy
$$
\eta\leq \frac{\epsilon}{48 L\bar{b}\sqrt{n}d},
\qquad
u\leq \frac{\sqrt{\epsilon}}{4 L\sqrt{d}},
\qquad
T\!-\!B\!+\!1
\geq
{\color{black}
\left\lceil\frac{6}{\eta\epsilon}
\max\left\{f(x(0))-f^\ast,1\right\}\right\rceil},
$$
we can guarantee that $\mfrac{1}{T\!-\!B\!+\!1}\sum_{t=B}^{T}
\mathbb{E}\!\left[\|\nabla f(x(t))\|^2\right]
\leq\epsilon$. Moreover, if all the conditions on the parameters are satisfied with equality, then $T=\Theta\left(\mfrac{\bar{b}\sqrt{n}d}{\epsilon^2}\right)$.
\end{theorem}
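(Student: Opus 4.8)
The plan is to run the standard descent-lemma argument for stochastic gradient descent on the smooth nonconvex function $f$, but carefully accounting for (i) the bias of the two-point estimator relative to $\nabla f$, (ii) the variance of the estimator, and (iii) the fact that the gradient estimate $G^i(t)$ in \eqref{eq:partial_grad_est_final} uses \emph{delayed} perturbation directions $z^i(\tau^i_j(t))$ paired with delayed difference quotients $D^i_j(t)$. Since the nonconvex setting has $\mathcal{X}_i=\mathbb{R}^{d_i}$ and $\psi_i=\frac12\|\cdot\|^2$, the update \eqref{eq:mirror_descent_multi_agent_modified} is simply $x(t+1)=x(t)-\eta G(t)$ with $G(t)=(G^1(t),\dots,G^n(t))$, and the sampling distribution $\mathcal{Z}^i$ reduces to $\mathcal{N}(0,I_{d_i})$ (no projection, since $\mathcal{S}_i=\mathbb{R}^{d_i}$), so issue (i)–(ii) is exactly the classical Gaussian-smoothing analysis of \citet{nesterov2017random}. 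First I would apply $L$-smoothness of $f$ to get $f(x(t+1))\le f(x(t))-\eta\langle\nabla f(x(t)),G(t)\rangle+\frac{L\eta^2}{2}\|G(t)\|^2$, and then take conditional expectations.

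The key technical point, and the main obstacle, is handling the delays in $G(t)$. Write $G(t)=\frac1n\sum_j D^j_j(\tau^i_j(t))\,z(\tau^i_j(t))$ componentwise; each term is the two-point estimator of $\nabla^i f_j$ evaluated at the \emph{past} iterate $x(\tau^i_j(t))$, not at $x(t)$. So $\mathbb{E}[G(t)\mid\text{past}]$ is not $\nabla f^u(x(t))$ but rather a mixture of $\nabla^i f_j^u(x(\tau^i_j(t)))$ over $j$. I would control the discrepancy $\|x(t)-x(\tau^i_j(t))\|$ using the fact that each step moves by at most $\eta\|G(\cdot)\|$ and the delay is at most $b_{ij}+\Delta\le B$ (Assumption~2), so this gap is $O(\eta B \cdot G\sqrt{d})$ in expectation — here the $G$-Lipschitz bound on $f_i$ gives $\|\mathsf{G}_{f_j}\|\lesssim G\|z\|$ and $\mathbb{E}\|z\|\lesssim\sqrt{d}$. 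Combining with $L$-smoothness of $f$, the delay contributes an error term to $\langle\nabla f(x(t)),\mathbb{E}[G(t)]\rangle$ proportional to $L\eta\,(\text{weighted sum of }b_{ij}+\Delta)\,G^2 d$; the weighting by $d_i$ in the sum is precisely what produces the factor $\bar{b}\sqrt{n}\,d$ (note $\bar{b}^2 = \frac{1}{n^2}\sum_{i,j}(b_{ij}+\Delta)^2$, and a Cauchy–Schwarz / dimension-counting step converts $\sum_{i,j}(b_{ij}+\Delta)d_i$ into $n d\cdot$ something $\lesssim \bar{b}\sqrt{n}\,d$ after absorbing $\sum_i d_i = d$). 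I would also bound the smoothing bias $\|\nabla f^u(x)-\nabla f(x)\|\le \frac{Lu}{2}(d+3)^{3/2}$ (or the cleaner $\le \frac{uL}{2}\sqrt{d}$-type bound via $L$-smoothness as in \citet{nesterov2017random}) and choose $u\le \sqrt{\epsilon}/(4L\sqrt{d})$ to make this bias contribute at most $O(\epsilon)$ after squaring.

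Once these pieces are in place, the recursion reads, roughly,
$$
\mathbb{E}[f(x(t+1))]\le \mathbb{E}[f(x(t))]-\tfrac{\eta}{2}\mathbb{E}\|\nabla f(x(t))\|^2 + C_1 \eta^2 L \bar{b}\sqrt{n}\,d\,G^2 + C_2\,\eta u^2 L^2 d + (\text{variance term } \lesssim \eta^2 L G^2 d),
$$
where I have used $\mathbb{E}\|G(t)\|^2\lesssim G^2(d+something)$ from the second-moment bound on the two-point estimator. Summing over $t=B,\dots,T$, telescoping, dividing by $\eta(T-B+1)$, and using $f(x(B))\le f(x(0)) + (\text{bounded drift over the first }B\text{ steps})$ — which is why the theorem's iteration count features $\max\{f(x(0))-f^\ast,1\}$ rather than just $f(x(0))-f^\ast$ — yields
$$
\frac{1}{T-B+1}\sum_{t=B}^{T}\mathbb{E}\|\nabla f(x(t))\|^2 \le \frac{2\max\{f(x(0))-f^\ast,1\}}{\eta(T-B+1)} + O(\eta L\bar{b}\sqrt{n}\,d) + O(u^2 L^2 d).
$$
Plugging in $\eta = \Theta(\epsilon/(L\bar{b}\sqrt{n}\,d))$, $u=\Theta(\sqrt{\epsilon}/(L\sqrt{d}))$, and $T-B+1=\Theta(1/(\eta\epsilon))=\Theta(\bar{b}\sqrt{n}\,d/\epsilon^2)$ makes each of the three terms $O(\epsilon)$, giving the claimed bound and the $T=\Theta(\bar{b}\sqrt{n}\,d/\epsilon^2)$ complexity. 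The constants $48$ and $6$ in the theorem statement come from carefully tracking $C_1,C_2$ and the variance constant; I would keep the bookkeeping in Section~\ref{sec:proofs} and only state the clean orders here. The genuinely delicate step throughout is the delay bookkeeping: one must be careful that $z^i(\tau^i_j(t))$ is still $\mathcal{N}(0,I_{d_i})$ and independent of $x(\tau^i_j(t))$ (true, since it was drawn at time $\tau^i_j(t)$ before that update), and that the various past iterates entering $\mathbb{E}[G(t)\mid\cdot]$ are measurable with respect to a filtration in which the conditional-expectation identities for the two-point estimator remain valid.
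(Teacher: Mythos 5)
Your proposal is correct and follows essentially the same route as the paper: the $L$-smoothness descent inequality, the decomposition of $-\langle\nabla f(x(t)),G(t)\rangle$ into a main term plus delay-induced error terms controlled via $\|x(t)-x(\tau^i_j(t))\|\le\eta\sum\|G(\cdot)\|$ and the bounded-delay assumption, the second-moment and smoothing-bias bounds for the two-point Gaussian estimator, and a telescoping sum in which the drift over the first $B$ steps accounts for the $\max\{f(x(0))-f^\ast,1\}$ term. The key subtleties you flag (the filtration/measurability argument making the conditional-expectation identity valid at the delayed times $\tau^i_j(t)$, and the Peter--Paul balancing that produces the $\bar{b}\sqrt{n}\,d$ factor) are exactly the ones the paper's Lemmas~\ref{lemma:nonconvex_inner_product_bound_1} and \ref{lemma:nonconvex_inner_product_bound_2} handle.
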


\begin{theorem}[Noisy, nonconvex]\label{theorem:nonconvex_noisy}
Suppose $\sigma>0$,
and let $\epsilon>0$ be sufficiently small. By choosing the parameters of Algorithm~\ref{alg:alg_zero_order} to satisfy
$$
u\leq\frac{\sqrt{\epsilon}}{4L\sqrt{d}},
\qquad
\eta\leq\frac{\epsilon u^2/\sigma^2}{2L\bar{b}\sqrt{n}d},
\qquad
T\!-\!B\!+\!1
\geq{\color{black}
\left\lceil\frac{6}{\eta\epsilon}
\max\left\{f(x(0))-f^\ast,1\right\}\right\rceil},
$$
we can guarantee that $\mfrac{1}{T\!-\!B\!+\!1}\sum_{t=B}^{T}
\mathbb{E}\!\left[\|\nabla f(x(t))\|^2\right]
\leq\epsilon$. Moreover, if all the conditions on the parameters are satisfied with equality, then $T=\Theta\left(\mfrac{\bar{b}\sqrt{n}d^2}{\epsilon^3}\right)$.
\end{theorem}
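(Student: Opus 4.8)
The plan is to run the classical descent-lemma argument for $L$-smooth nonconvex optimization, treating the surrogate $G(t)$ as a \emph{biased, delayed, noisy} stochastic gradient. Since $\mathcal{X}_i=\mathbb{R}^{d_i}$ we have $\mathcal{S}_i(x^i,u)=\mathbb{R}^{d_i}$, so $\mathcal{Z}^i(x^i,u)=\mathcal{N}(0,I_{d_i})$, $z^i(t)=\tilde z^i(t)$, and \eqref{eq:mirror_descent_multi_agent_modified} is plain SGD $x(t+1)=x(t)-\eta G(t)$. First I would write $f(x(t+1))\le f(x(t))-\eta\langle\nabla f(x(t)),G(t)\rangle+\frac{L\eta^2}{2}\|G(t)\|^2$, take expectations, and sum over $t=B,\dots,T$, telescoping the $f$-values. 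Everything then reduces to (i) a lower bound on $\mathbb{E}[\langle\nabla f(x(t)),G(t)\rangle]$ in terms of $\mathbb{E}\|\nabla f(x(t))\|^2$, and (ii) an upper bound on $\mathbb{E}\|G(t)\|^2$. For (ii), the $G$-Lipschitzness of $f_j$ gives $|D^i_j(t)|\le G\|z(\tau^i_j(t))\|+\frac{1}{2u}|\varepsilon^+_j(\tau^i_j(t))-\varepsilon^-_j(\tau^i_j(t))|$, so Gaussian fourth-moment estimates plus the noise-variance bound yield $\mathbb{E}\|G^i(t)\|^2\lesssim(G^2d+\sigma^2/u^2)d_i$ and hence $\mathbb{E}\|G(t)\|^2\lesssim G^2d^2+\sigma^2d/u^2=:V$, dominated by $\sigma^2d/u^2$ for small $u$. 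Together with $x(t)-x(\tau)=-\eta\sum_{s=\tau}^{t-1}G(s)$, Cauchy--Schwarz, and Assumption~2 ($t-\tau^i_j(t)\le b_{ij}+\Delta$), this gives $\mathbb{E}\|x(t)-x(\tau^i_j(t))\|^2\le\eta^2(b_{ij}+\Delta)^2V$.

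For (i) I would expand $\langle\nabla f(x(t)),G(t)\rangle=\sum_i\frac1n\sum_j\langle\nabla^i f(x(t)),D^i_j(t)z^i(\tau^i_j(t))\rangle$ and handle each $(i,j)$ at its source time $\tau=\tau^i_j(t)$: conditioning on the history generated by all perturbations and noises strictly before time $\tau$ (the delay pattern being exogenous), $x(\tau)$ is measurable while $z(\tau)\sim\mathcal{N}(0,I_d)$ and $\varepsilon^\pm_j(\tau)$ are fresh, so the Gaussian Stein identity gives $\mathbb{E}[D^i_j(t)z^i(\tau)\mid\cdot]=\nabla^i f_j^u(x(\tau))$. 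Splitting $\nabla^i f(x(t))=\nabla^i f(x(\tau))+\big(\nabla^i f(x(t))-\nabla^i f(x(\tau))\big)$ separates the term into a ``main'' piece $\langle\nabla^i f(x(\tau)),\nabla^i f_j^u(x(\tau))\rangle$ and a ``delay-coupling'' piece. In the main piece I would use $L$-smoothness of $f$ and of $f_j^u$ (an average of $L$-smooth functions), together with $\|\nabla^i f\|,\|\nabla f_j^u\|\le G$, to replace $x(\tau)$ by $x(t)$ at cost $\lesssim GL\,\|x(t)-x(\tau^i_j(t))\|$, then collapse $\frac1n\sum_j\nabla^i f_j^u(x(t))=\nabla^i f^u(x(t))$ and sum over $i$ to get $\langle\nabla f(x(t)),\nabla f^u(x(t))\rangle\ge\|\nabla f(x(t))\|^2-uL\sqrt{d}\,\|\nabla f(x(t))\|$. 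The crucial point is that the smoothing bias is charged \emph{once, globally} (via $\|\nabla f^u-\nabla f\|\le uL\sqrt{d}$), which is what keeps the $u$-condition free of any $\sqrt n$; a Young's inequality absorbs the linear $\|\nabla f(x(t))\|$ terms into a fraction of $\|\nabla f(x(t))\|^2$ and leaves a residual $\lesssim u^2L^2d$.

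The delicate term is the delay-coupling piece $\langle\nabla^i f(x(t))-\nabla^i f(x(\tau^i_j(t))),\,D^i_j(t)z^i(\tau^i_j(t))\rangle$: here $\nabla^i f(x(t))$ is not measurable at the source time, so I must bound it directly rather than through a conditional expectation. Using $\|\nabla^i f(x(t))-\nabla^i f(x(\tau))\|\le L\|x(t)-x(\tau)\|$ and Cauchy--Schwarz \emph{as a product}, pairing the displacement against $|D^i_j(t)|\,\|z^i(\tau)\|$, yields a bound with the \emph{first} power $\sqrt{\mathbb{E}\|x(t)-x(\tau^i_j(t))\|^2}\le\eta(b_{ij}+\Delta)\sqrt{V}$ times $\sqrt{\mathbb{E}[(D^i_j(t))^2\|z^i(\tau)\|^2]}\lesssim\sqrt{d_i}\,(G\sqrt{d}+\sigma/u)$. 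Summing $\frac1n\sum_{i,j}$ and applying Cauchy--Schwarz over node pairs, $\sum_{i,j}(b_{ij}+\Delta)\sqrt{d_i}\le\big(\sum_{i,j}(b_{ij}+\Delta)^2\big)^{1/2}\big(\sum_{i,j}d_i\big)^{1/2}=n\bar{b}\sqrt{nd}$, so the whole term is $\lesssim L\eta\,\sigma^2 d\,\bar{b}\sqrt{n}/u^2$ (the dominant part, for small $u$). This is exactly where $\bar{b}\sqrt n$ enters; had I bounded the displacement by its square I would get $\sum_{i,j}(b_{ij}+\Delta)^2=n^2\bar{b}^2$ and a looser $n\bar{b}^2$ factor, which is nevertheless dominated for $\epsilon$ small. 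The companion ``move-to-$t$'' error from the main piece contributes only $\lesssim GL\eta\sqrt{V}\,n\bar{b}$, with $\sqrt V\lesssim\sigma\sqrt d/u$ carrying an extra $u$ rather than $u^2$ in the denominator, and is likewise dominated for small $\epsilon$.

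Assembling, after telescoping and dividing by $\eta(T-B+1)$, the quantity $\frac{1}{T-B+1}\sum_{t=B}^T\mathbb{E}\|\nabla f(x(t))\|^2$ is bounded by a sum of: $\frac{\mathbb{E}[f(x(B))]-f^\ast}{\eta(T-B+1)}$; the global smoothing residual $\lesssim u^2L^2d$; the variance residual $\lesssim L\eta V$; and the delay residuals $\lesssim L\eta\,\sigma^2 d\,\bar{b}\sqrt{n}/u^2$ and $\lesssim GL\eta\,\sigma\sqrt{d}\,n\bar{b}/u$. I would bound $\mathbb{E}[f(x(B))]-f^\ast$ by running the same descent inequality over the burn-in $t=0,\dots,B-1$ (where some array entries are still zero) and crudely estimating $|\mathbb{E}\langle\nabla f,G\rangle|\le G\sqrt{nV}$, $\mathbb{E}\|G(t)\|^2\le V$, giving $\mathbb{E}[f(x(B))]-f^\ast\le(f(x(0))-f^\ast)+o(1)\le 2\max\{f(x(0))-f^\ast,1\}$ for $\epsilon$ small --- this is why the stated iteration count carries $\max\{f(x(0))-f^\ast,1\}$. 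Each hypothesis of the theorem ($u\le\sqrt{\epsilon}/(4L\sqrt{d})$ kills the $u^2L^2d$ residual; $\eta\le\epsilon u^2/(2L\sigma^2\bar{b}\sqrt{n}d)$ kills the binding $\bar{b}\sqrt n$ delay residual, and a fortiori the others; the lower bound on $T-B+1$ kills the initial term) then forces the corresponding residual below a fixed fraction of $\epsilon$. Finally, saturating all conditions, $u^2=\Theta(\epsilon/(L^2d))$, $\eta=\Theta(\epsilon^2/(L^3\sigma^2d^2\bar{b}\sqrt{n}))$, and $T-B+1=\Theta(1/(\eta\epsilon))$, so $T=\Theta(\bar{b}\sqrt{n}\,d^2/\epsilon^3)$. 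The main obstacle is the delay bookkeeping in step (i): each of the $n$ summands of $G^i(t)$ references a different past iterate, so one cannot use a single filtration, and one must both charge the smoothing bias globally (to avoid a spurious $\sqrt n$ in the $u$-condition) and keep displacements to the first power in the delay-coupling term (to get the sharp $\bar{b}\sqrt n$ rather than $n\bar{b}^2$); the remaining moment computations for Gaussian smoothing with additive noise mirror those in the proof of Theorem~\ref{theorem:nonconvex_noiseless}, with $\sigma^2/u^2$ replacing the noiseless variance --- which is precisely why $u$ can no longer be shrunk freely and the rate degrades from $\epsilon^{-2}$ to $\epsilon^{-3}$.
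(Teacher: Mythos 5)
Your proposal is correct and follows essentially the same route as the paper's proof: the descent lemma, the second-moment bound on $G(t)$, conditioning at the source times $\tau^i_j(t)$ to identify $\mathbb{E}[D_j z^i\mid\mathcal{F}_\tau]=\nabla^i f_j^u(x(\tau))$, splitting off the delay-coupling terms and summing $\sum_{i,j}(b_{ij}+\Delta)\sqrt{d_i}$ by Cauchy--Schwarz to extract $\bar b\sqrt{n}$, charging the smoothing bias once via $\|\nabla f^u-\nabla f\|\le uL\sqrt d$, telescoping, and a crude burn-in bound on $\mathbb{E}[f(x(B))]$ (which the paper gets from Lipschitzness rather than a second descent pass). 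The only substantive deviation is your second-moment estimate $\mathbb{E}\|G(t)\|^2\lesssim G^2d^2+\sigma^2 d/u^2$, which is a factor of $d$ looser in the noiseless part than the paper's $(12G^2+\sigma^2/(2u^2))d$ (obtained via the Shamir-type concentration argument in Lemmas~\ref{lemma:concentration_inequality}--\ref{lemma:bound_2moment_grad_est}); as you note, this is harmless here because $\sigma^2 d/u^2$ dominates under the stated choice of $u$.
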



The following are some discussions regarding the complexity results for the unconstrained nonconvex setting:
\begin{enumerate}
\item {\bf Convergence rates.} The complexity bound results can be equivalently represented as convergence rate results:
$$
\begin{aligned}
\textrm{noiseless:}& & \frac{1}{T\!-\!B\!+\!1}\sum_{t=B}^{T}
\mathbb{E}\!\left[\|\nabla f(x(t))\|^2\right]
\leq\ &
O\!\left(
\sqrt{\frac{\bar{b}\sqrt{n}d}{T}}
\right), \\
\textrm{noisy:}& &
\frac{1}{T\!-\!B\!+\!1}\sum_{t=B}^{T}
\mathbb{E}\!\left[\|\nabla f(x(t))\|^2\right]
\leq\ &
O\!\left(
\left(\frac{\bar{b}\sqrt{n}d^2}{T}\right)^{\!\!\frac{1}{3}}
\right).
\end{aligned}
$$
In terms of the dependence on the number of iterations $T$ and the problem dimension $d$, the result for the noiseless is consistent with its centralized zeroth-order counterparts~\citep{nesterov2017random}. The authors are not yet aware of convergence rate or complexity bound results for the centralized counterpart of Algorithm~\ref{alg:alg_zero_order} in the nonconvex noisy setting.

\item {\bf Dependence on network connectivity and size.} The complexity bounds for both the noiseless and noisy cases have an additional factor $\bar b\sqrt{n}$ {\color{black} compared to the centralized case}, representing the impacts of the number of agents and the network connectivity. Different from the convex setting, there is an explicit dependence on the number of agents $n$. Whether this dependence is a property of the algorithm or a proof artifact is still under investigation.
\end{enumerate}

\section{Proofs of Complexity Results}\label{sec:proofs}

Note that the iterations of Algorithm~\ref{alg:alg_zero_order} can be written as
\begin{equation}\label{eq:alg_basic_iter}
x(t+1)
=\argmin_{x\in(1-\delta)\mathcal{X}}
\left\{
\langle G(t),x-x(t)\rangle
+\frac{1}{\eta}\mathscr{D}_{\psi}(x|x(t))
\right\},
\end{equation}
where $G(t)$ is the $d$-dimensional vector that concatenates $G^1(t),\ldots,G^n(t)$, and
$
\psi(x)=\sum_{i=1}^n\psi_i(x^i)
$. 
Recall that each $\psi_i(x)$ is $1$-strongly convex, so that
$
\mathscr{D}_{\psi_i}(x|y)\geq \frac{1}{2}\|x-y\|^2$ for all 
$x,y\in\mathcal{X}_i$. In the unconstrained nonconvex setting, we simply have
$x(t+1)=x(t)-\eta G(t)$. {\color{black}For notational simplicity, we let $D_j(t)$ denote $D_j^j(t)$ for $t\geq 0$, and let
each $D_j(t)=0$ and $z(t)=0$ for $t<0$.} We let $\mathcal{F}_t$ denote the $\sigma$-algebra generated by $x(\tau)$ for $\tau\leq t$ and all $\tau^i_j(s)$ for $1\leq i,j\leq n$ and $0\leq s\leq T$.

\subsection{Auxiliary Results on the Gradient Estimator and the Mirror Descent}

In this section, we provide preliminary results on the statistics of the zeroth-order gradient estimator \eqref{eq:partial_grad_est_final} and on the mirror descent iterations \eqref{eq:mirror_descent_multi_agent_modified}.

First of all, we present the following lemma from existing works on zeroth-order gradient estimators; see \citet{nesterov2017random} and \citet[Lemma 6(b)]{malik2019derivative}:
\begin{lemma}\label{lemma:bias_grad_est_nonconvex}
Let $h:\mathbb{R}^d\rightarrow\mathbb{R}$ be an $L$-smooth function. Then
$$
\mathbb{E}_{z\sim\mathcal{N}(0,I_d)}
[\mathsf{G}_h(x;u,z)] = \nabla h^u(x),
\qquad\forall x\in\mathbb{R}^d,
\qquad 
$$
where $h^u:\mathbb{R}^d\rightarrow\mathbb{R}$ is an $L$-smooth function that satisfies
$$
\|\nabla h(x)-\nabla h^u(x)\|\leq uL\sqrt{d},
\qquad\forall x\in\mathbb{R}^d.
$$
\end{lemma}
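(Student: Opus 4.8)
The plan is to recall the standard Gaussian-smoothing argument of \citet{nesterov2017random}. First I would define the smoothed function explicitly as $h^u(x) \coloneqq \mathbb{E}_{y\sim\mathcal{N}(0,I_d)}[h(x+uy)]$, and verify the unbiasedness identity $\mathbb{E}_{z\sim\mathcal{N}(0,I_d)}[\mathsf{G}_h(x;u,z)] = \nabla h^u(x)$. The key computation here is that, because the Gaussian density is symmetric, $\mathbb{E}_z\!\left[\frac{h(x+uz)-h(x-uz)}{2u}z\right] = \mathbb{E}_z\!\left[\frac{h(x+uz)}{u}z\right]$ (the two half-terms contribute equally after the sign flip $z\mapsto -z$). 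Then a direct differentiation under the integral sign, or equivalently an integration by parts using $\nabla_x$ of the Gaussian density $\nabla_x\! \left[(2\pi)^{-d/2}e^{-\|y-x\|^2/(2u^2)}\right]$, yields $\nabla h^u(x) = \mathbb{E}_z\!\left[\frac{h(x+uz)}{u}z\right]$, matching the estimator's expectation. I would cite \citet{nesterov2017random} for this rather than reproduce the computation in full.

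Next I would establish that $h^u$ inherits $L$-smoothness. This follows because $\nabla h^u(x) = \mathbb{E}_y[\nabla h(x+uy)]$ (differentiation under the expectation, justified by the $L$-smoothness and hence local Lipschitzness of $\nabla h$), so for any $x_1,x_2$ we get $\|\nabla h^u(x_1)-\nabla h^u(x_2)\| \leq \mathbb{E}_y\|\nabla h(x_1+uy)-\nabla h(x_2+uy)\| \leq L\|x_1-x_2\|$ by Jensen and the smoothness of $h$.

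Finally, for the bias bound $\|\nabla h(x)-\nabla h^u(x)\|\leq uL\sqrt{d}$, I would write $\nabla h^u(x) - \nabla h(x) = \mathbb{E}_y[\nabla h(x+uy) - \nabla h(x)]$, bound the integrand by $\|\nabla h(x+uy)-\nabla h(x)\| \leq Lu\|y\|$ using $L$-smoothness, apply Jensen to pull the norm inside, and conclude $\|\nabla h^u(x)-\nabla h(x)\| \leq Lu\,\mathbb{E}_y\|y\| \leq Lu\sqrt{\mathbb{E}_y\|y\|^2} = Lu\sqrt{d}$, where the last step uses Jensen again (or Cauchy–Schwarz) together with $\mathbb{E}_{y\sim\mathcal{N}(0,I_d)}\|y\|^2 = d$. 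There is no real obstacle here; the only point requiring mild care is the justification of differentiation under the integral sign, which is standard given the Lipschitz gradient assumption, and in any case this lemma is quoted verbatim from \citet{nesterov2017random} and \citet[Lemma 6(b)]{malik2019derivative}, so I would keep the argument brief and defer to those references.
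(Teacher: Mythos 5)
Your proposal is correct and is exactly the standard Gaussian-smoothing argument that the paper itself does not reproduce but defers to \citet{nesterov2017random} and \citet[Lemma 6(b)]{malik2019derivative}; the symmetry reduction, the integration-by-parts (Stein) identity, and the two Jensen applications for the smoothness and bias bounds are all sound. The only point worth stating explicitly if you wrote it out would be that $L$-smoothness gives $h$ at most quadratic growth, so the Gaussian expectations defining $h^u$ and justifying differentiation under the integral are finite.
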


Lemma~\ref{lemma:bias_grad_est_nonconvex} will be used in the analysis of the unconstrained nonconvex setting, but does not suffice for the analysis of the constrained convex setting where $\mathcal{X}$ is compact and $z$ is sampled from $\mathcal{Z}(x,u)$ rather than $\mathcal{N}(0,I_d)$. The following lemma is derived for handling the constrained convex setting, whose proof is postponed to Appendix~\ref{sec:proof:lemma:2point_bias}.
\begin{lemma}\label{lemma:2point_bias}
Suppose $\mathcal{X}$ is compact and $\underline{r}\mathbb{B}_d\subseteq \mathcal{X}\subseteq\overline{R}\mathbb{B}_d$. Let $h:\mathcal{X}\rightarrow \mathbb{R}$ be a $G$-Lipschitz continuous and $L$-smooth function. Let $\delta\in(0,1)$ and suppose
$
0<u\leq \mfrac{\delta\underline{r}}{3\sqrt{d}}
$.
Then there exists some $\kappa(u)\in[199/200,1]$ such that 
$$
\left\|\mathbb{E}_{z\sim\mathcal{Z}(x,u)}\!\left[
\mathsf{G}_h(x;u,z)
\right]
-
\kappa(u)
\nabla h^{u}(x)
\right\|
\leq 
\frac{2G\overline{R}}{u}\exp\!\left(\frac{d}{2}-\frac{\delta^2\underline{r}^2}{4u^2}\right),
\qquad\forall x\in(1-\delta)\mathcal{X},
$$
where $h^{u}:(1-\delta)\mathcal{X}\rightarrow\mathbb{R}$ is given by
\begin{equation}\label{eq:def_hu_compact}
h^u(x)
=\mathbb{E}_{y\sim\mathcal{Y}(u)}
[h(x+uy)]
\end{equation}
for some compactly supported and isotropic distribution $\mathcal{Y}(u)$ that does not depend on the function $h$. Moreover, $h^u$ is a $G$-Lipschitz continuous and $L$-smooth function that satisfies
$$
\left|h^{u}(x)-h(x)\right|
\leq 
\min\!\left\{uG\sqrt{d},\frac{1}{2}u^2Ld\right\},\qquad
\forall  x\in(1-\delta)\mathcal{X}.
$$
\end{lemma}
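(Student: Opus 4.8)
The plan is to couple the projected‑Gaussian perturbation $z\sim\mathcal{Z}(x,u)$ with an ordinary Gaussian: write $z=\pi_x(\tilde z)$ where $\tilde z\sim\mathcal{N}(0,I_d)$ and $\pi_x=\bigl(\mathcal{P}_{\mathcal{S}_1(x^1,u)},\dots,\mathcal{P}_{\mathcal{S}_n(x^n,u)}\bigr)$, and set $\rho:=\delta\underline r/u\ge 3\sqrt d$. Two observations will drive everything. First, by \citet[Observation 3.2]{flaxman2004online} (as already noted before the lemma) $\tfrac{\delta\underline r_i}{u}\mathbb{B}_{d_i}\subseteq\mathcal{S}_i(x^i,u)$, and $\underline r_i\ge\underline r$ gives $\rho\mathbb{B}_{d_i}\subseteq\mathcal{S}_i(x^i,u)$; hence on the event $\{\|\tilde z\|\le\rho\}$ (which forces $\|\tilde z^i\|\le\rho$ for every $i$) no coordinate is projected, so $z=\tilde z$. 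Second, each $\mathcal{P}_{\mathcal{S}_i(x^i,u)}$ is nonexpansive and fixes the origin, so $\|z\|\le\|\tilde z\|$ always; and $z^i\in\mathcal{S}_i(x^i,u)$ gives $x^i\pm uz^i\in\mathcal{X}_i$, hence $x\pm uz\in\mathcal{X}\subseteq\overline R\mathbb{B}_d$, so $|h(x+uz)-h(x-uz)|\le 2G\overline R$ and therefore $\|\mathsf{G}_h(x;u,z)\|\le\tfrac{G\overline R}{u}\|z\|$. I would then take $\mathcal{Y}(u)$ to be $\mathcal{N}(0,I_d)$ conditioned on $\rho\mathbb{B}_d$ (compactly supported, isotropic, independent of $h$), $\kappa(u):=\mathbb{P}(\|\tilde z\|\le\rho)$, and $h^u(x):=\mathbb{E}_{y\sim\mathcal{Y}(u)}[h(x+uy)]$, which is well defined on $(1-\delta)\mathcal{X}$ because $x+\delta\underline r\mathbb{B}_d\subseteq\mathcal{X}$ there. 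Standard Gaussian concentration — verified directly for $d\le 2$, and for $d\ge 3$ by the Chernoff bound $\mathbb{P}(\|\tilde z\|^2\ge 9d)\le(\sqrt2\,e^{-9/4})^d$ — gives $\mathbb{P}(\|\tilde z\|>3\sqrt d)\le 1/200$, so $\kappa(u)\in[199/200,1]$.

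The core identity is the decomposition
\[
\mathbb{E}_{z\sim\mathcal{Z}(x,u)}\!\bigl[\mathsf{G}_h(x;u,z)\bigr]
=\kappa(u)\,\mathbb{E}_{y\sim\mathcal{Y}(u)}\!\bigl[\mathsf{G}_h(x;u,y)\bigr]
+\mathbb{E}_{\tilde z\sim\mathcal{N}(0,I_d)}\!\bigl[\mathbf{1}\{\|\tilde z\|>\rho\}\,\mathsf{G}_h(x;u,\pi_x(\tilde z))\bigr].
\]
For the first summand I would use symmetry of $\mathcal{Y}(u)$ to write $\mathbb{E}_y[\mathsf{G}_h(x;u,y)]=\tfrac1u\mathbb{E}_y[h(x+uy)y]$ and then integrate by parts over $\rho\mathbb{B}_d$ against the density $\phi$ of $\mathcal{N}(0,I_d)$, using $z\phi(z)=-\nabla\phi(z)$ and $\mathbb{E}_y[\nabla h(x+uy)]=\nabla h^u(x)$; this yields $\mathbb{E}_{y\sim\mathcal{Y}(u)}[\mathsf{G}_h(x;u,y)]=\nabla h^u(x)+E_\partial(x)$ with $E_\partial(x)=-\tfrac{1}{u\kappa(u)}\int_{\rho S^{d-1}}h(x+uz)\phi(z)\hat n(z)\,d\sigma(z)$. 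Since $\int_{\rho S^{d-1}}\hat n\,d\sigma=0$ one may replace $h(x+uz)$ by $h(x+uz)-h(x)$, bound this by $Gu\rho$ on the sphere, substitute the explicit value of $\phi$ on $\rho S^{d-1}$ and the sphere area $\tfrac{2\pi^{d/2}}{\Gamma(d/2)}\rho^{d-1}$, and use $\kappa(u)\ge 1/2$ to get $\|E_\partial(x)\|\le 4G\rho^d e^{-\rho^2/2}/(2^{d/2}\Gamma(d/2))$. For the tail term, the linear bound together with $\|\pi_x(\tilde z)\|\le\|\tilde z\|$ gives the norm bound $\tfrac{G\overline R}{u}\,\mathbb{E}[\|\tilde z\|\,\mathbf{1}\{\|\tilde z\|>\rho\}]$, and a direct computation with the chi density $\propto t^{d-1}e^{-t^2/2}$ — splitting $t^d e^{-t^2/2}=(t^d e^{-t^2/4})e^{-t^2/4}$, noting $t^d e^{-t^2/4}$ decreases for $t\ge\sqrt{2d}$, and $\int_\rho^\infty e^{-t^2/4}\,dt\le\tfrac2\rho e^{-\rho^2/4}$ — gives $\mathbb{E}[\|\tilde z\|\,\mathbf{1}\{\|\tilde z\|>\rho\}]\le 4\rho^{d-1}e^{-\rho^2/2}/(2^{d/2}\Gamma(d/2))$.

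It then remains to show each error term is at most $\tfrac{G\overline R}{u}\exp(d/2-\rho^2/4)$. Since $\overline R/u\ge\rho\ge 3\sqrt d$, it suffices to prove $\rho^{d-1}e^{-\rho^2/4}\le 2^{d/2}\Gamma(d/2)e^{d/2}/4$ for all $\rho\ge 3\sqrt d$ and $d\ge1$: the left side decreases in $\rho$ for $\rho\ge\sqrt{2(d-1)}$ hence is maximized at $\rho=3\sqrt d$, and the Stirling lower bound $\Gamma(d/2)\ge\sqrt{2\pi}(d/2)^{(d-1)/2}e^{-d/2}$ reduces the claim to $(3e^{-9/4})^d\le 3e^{-9/4}<1$. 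Adding the two contributions and using $\rho^2/4=\delta^2\underline r^2/(4u^2)$ gives the stated inequality. For the properties of $h^u$: $G$‑Lipschitzness and $L$‑smoothness pass through $\mathbb{E}_{y\sim\mathcal{Y}(u)}[\cdot]$ (with $\nabla h^u(x)=\mathbb{E}_{y\sim\mathcal{Y}(u)}[\nabla h(x+uy)]$ legitimate by the compact support of $\mathcal{Y}(u)$); $|h^u(x)-h(x)|\le uG\sqrt d$ follows from $|h(x+uy)-h(x)|\le Gu\|y\|$, Jensen, and $\mathbb{E}_{\mathcal{Y}(u)}[\|y\|^2]\le d$ (conditioning $\mathcal{N}(0,I_d)$ on $\{\|\cdot\|\le\rho\}$ with $\rho^2\ge d$ cannot increase the second moment); and $|h^u(x)-h(x)|\le\tfrac12u^2Ld$ follows additionally from $\mathbb{E}_{\mathcal{Y}(u)}[y]=0$ and the quadratic bound from $L$‑smoothness.

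The step I expect to be the main obstacle is the boundary‑term accounting for $E_\partial(x)$: in particular, justifying the integration by parts when $h$ is only assumed defined and smooth \emph{on} $\mathcal{X}$ (which I would handle either by integrating over $\rho'\mathbb{B}_d$ with $\rho'\uparrow\rho$, or by invoking $C^{1,1}$ regularity of $h$ up to $\partial\mathcal{X}$), and pushing the $\Gamma$‑function estimates through uniformly over all $d\ge1$. The rest is bookkeeping: realizing $\mathcal{Z}(x,u)$ as a projected Gaussian, identifying $\mathcal{Y}(u)$ and $\kappa(u)$, and the two elementary tail estimates.
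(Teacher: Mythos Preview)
Your argument is correct, but the paper takes a different route at the key step of choosing $\mathcal{Y}(u)$. You use the \emph{hard}-truncated Gaussian (the law of $\tilde z$ conditioned on $\{\|\tilde z\|\le\rho\}$), which forces you to carry the surface term $E_\partial(x)$ through the integration by parts and then bound it separately via sphere-area and Stirling estimates. The paper instead picks $\mathcal{Y}(u)$ with density proportional to
\[
\tilde p_u(\|y\|^2)\;=\;(2\pi)^{-d/2}\Bigl(e^{-\|y\|^2/2}-e^{-\rho^2/2}\Bigr)\,\mathsf{1}_{\{\|y\|\le\rho\}},
\]
i.e.\ a \emph{soft}-truncated Gaussian that vanishes continuously at $\|y\|=\rho$. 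Because this density is zero on the boundary of the ball, the divergence-theorem computation gives
\[
\int_{\rho\mathbb{B}_d}\mathsf{G}_h(x;u,z)\,\phi(z)\,dz \;=\;\kappa(u)\,\nabla h^u(x)
\]
\emph{exactly}, with no surface term; the only error left to control is the tail contribution $\{\|\tilde z\|>\rho\}$, and the paper's $\kappa(u)$ is the normalizing constant $\int\tilde p_u(\|y\|^2)\,dy$ rather than your $\mathbb{P}(\|\tilde z\|\le\rho)$. The trade-off is that the paper must then verify $\mathbb{E}_{\mathcal{Y}(u)}[\|y\|^2]\le d$ for this non-standard density (done via an incomplete-Gamma identity), whereas for your truncated Gaussian this follows immediately from $\mathbb{E}[\|\tilde z\|^2\mid\|\tilde z\|\le\rho]\le\mathbb{E}[\|\tilde z\|^2]$. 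Either way the Stirling/Chernoff bookkeeping is needed at the end to pin down $\kappa(u)\ge 199/200$. Your concern about justifying the integration by parts is not a real obstacle here: since $x\in(1-\delta)\mathcal{X}$ guarantees $x+u\rho\mathbb{B}_d\subseteq\mathcal{X}$ and $h$ is assumed $L$-smooth (hence $C^1$) on all of $\mathcal{X}$, the divergence theorem applies directly on the closed ball.
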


The following lemma deals with the second moment of the (delayed) gradient estimation \eqref{eq:partial_grad_est_final}, whose proof is given in Appendix~\ref{sec:proof:lemma:bound_2moment_grad_est_noisy}.
\begin{lemma}\label{lemma:bound_2moment_grad_est_noisy}
For any $t\geq 0$, we have
\begin{align*}
& \mathbb{E}\!\left[
\left.
\big\|D_j(t)\,z^i(t)\big\|^2
\right|\mathcal{F}_t
\right]
\leq
\left(
12G^2
+\frac{\sigma^2}{2u^2}\right)d_i, \\
& \mathbb{E}\!\left[\|G^i(t)\|^2\right]
\leq
\left(
12G^2
+\frac{\sigma^2}{2u^2}\right)d_i,
\qquad
\mathbb{E}\!\left[\|G(t)\|^2\right]
\leq
\left(
12G^2
+\frac{\sigma^2}{2u^2}\right)d.
\end{align*}
\end{lemma}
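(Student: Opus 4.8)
The plan is to bound $\mathbb{E}[\|D_j(t)z^i(t)\|^2 \mid \mathcal{F}_t]$ first, and then deduce the bounds on $\mathbb{E}[\|G^i(t)\|^2]$ and $\mathbb{E}[\|G(t)\|^2]$ by Jensen/Cauchy--Schwarz, since $G^i(t)=\frac{1}{n}\sum_j D^i_j(t)z^i(\tau^i_j(t))$ and each $D^i_j(t)z^i(\tau^i_j(t))$ has the same form $D_j(\tau)z^i(\tau)$ for some past time $\tau$ — the key observation being that the delayed information $D^i_j(t)$ is always paired with the perturbation $z^i(\tau^i_j(t))$ that was drawn \emph{at the same time} agent $j$'s difference quotient was generated, so the pair $(D_j(\tau),z^i(\tau))$ is exactly of the form analyzed in the single-time bound. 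Conditioning on $\mathcal{F}_t$ (which includes all the timestamps $\tau^i_j(s)$) freezes which past index $\tau$ is used, so the per-term bound transfers directly.

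For the single-time bound, I would condition further on $x(t)$ (equivalently on $x^i(t)$ and $x^j(t)$, which determine the sampling distributions). Write $D_j(t) = \frac{\hat f_j^+(t)-\hat f_j^-(t)}{2u} = \frac{f_j(x(t)+uz(t))-f_j(x(t)-uz(t))}{2u} + \frac{\varepsilon_j^+(t)-\varepsilon_j^-(t)}{2u}$. The noise term is independent of $z(t)$ and of everything in $\mathcal{F}_t$, has zero mean, and variance at most $\sigma^2/(2u^2)$, so after expanding $\|D_j(t)z^i(t)\|^2$ the cross term vanishes in expectation and the noise contributes at most $\frac{\sigma^2}{2u^2}\mathbb{E}[\|z^i(t)\|^2]$. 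For the deterministic part, $G$-Lipschitzness of $f_j$ gives $\left|\frac{f_j(x(t)+uz(t))-f_j(x(t)-uz(t))}{2u}\right| \le G\|z(t)\|$, so that piece contributes at most $G^2 \mathbb{E}[\|z(t)\|^2\,\|z^i(t)\|^2]$. It remains to control the moments of $z$ sampled from $\mathcal{Z}(x,u)$. Since $z^i = \mathcal{P}_{\mathcal{S}_i(x^i,u)}[\tilde z^i]$ with $\tilde z^i\sim\mathcal{N}(0,I_{d_i})$ and $0\in\mathcal{S}_i(x^i,u)$, projection onto a convex set containing the origin is $1$-Lipschitz and fixes $0$, so $\|z^i\|\le\|\tilde z^i\|$ pointwise; hence $\mathbb{E}[\|z^i\|^2]\le d_i$, $\mathbb{E}[\|z\|^2]\le d$, $\mathbb{E}[\|z^i\|^4]\le \mathbb{E}[\|\tilde z^i\|^4] = d_i(d_i+2)$, and the cross moment $\mathbb{E}[\|z\|^2\|z^i\|^2]$ can be bounded using independence of the subvectors $z^k$ across $k$ together with these Gaussian fourth-moment estimates. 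Assembling these pieces and using crude numerical bounds (e.g. $d_i(d_i+2)\le 3d_i\cdot\text{(something)}$, or bounding $\mathbb{E}[\|z\|^2\|z^i\|^2]\le 12 d_i$ type estimates via $(d+2)d_i \le \ldots$) yields the constant $12G^2$.

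The main obstacle I anticipate is pinning down the numerical constant $12$: it comes entirely from bounding the mixed fourth moment $\mathbb{E}[\|z(t)\|^2\|z^i(t)\|^2]$ by something like $12 d_i$ (or $\mathbb{E}[\|z(t)\|^2\|z^i(t)\|^2]\le \mathbb{E}[\|\tilde z(t)\|^2\|\tilde z^i(t)\|^2]$ and then a Gaussian computation), and getting a clean bound of the form $(\text{const})\,d_i$ rather than something involving $d\cdot d_i$ requires using that $\|\tilde z\|^2=\sum_k\|\tilde z^k\|^2$ and splitting off the $k=i$ term. Concretely $\mathbb{E}[\|\tilde z\|^2\|\tilde z^i\|^2] = \mathbb{E}[\|\tilde z^i\|^4] + \sum_{k\ne i}\mathbb{E}[\|\tilde z^k\|^2]\mathbb{E}[\|\tilde z^i\|^2] = d_i(d_i+2) + (d-d_i)d_i = d_i(d+2)$, which is \emph{not} $O(d_i)$ — so the bound as stated cannot come from this route, and I suspect the authors instead bound $\|z(t)\| \le \|z^i(t)\|$ is false, meaning the correct argument must use a different pairing: the perturbation $z^i(\tau)$ multiplying $D_j(\tau)$ lives only in $\mathbb{R}^{d_i}$, while $D_j(\tau)$ involves $\|z(\tau)\|$ over all coordinates. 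The resolution is that after taking expectation over $z^i(t)$ \emph{conditional on} $z^{k}(t)$ for $k\ne i$ (valid by independence), one gets $G^2\,\mathbb{E}[\|z^i\|^2\,(\|z^i\|^2+\sum_{k\ne i}\|z^k\|^2)]$, and here one must observe that the term $\sum_{k\ne i}\|z^k\|^2$ has conditional expectation at most $d-d_i$, which still leaves a $d\cdot d_i$ — so the honest bound must be that $\|z^i(t)\|\le \|\tilde z^i(t)\|$ AND $f_j$'s difference quotient is bounded via the \emph{$i$-th block only} somehow, which it is not. I would therefore look carefully at whether the intended bound actually reads $(12G^2 + \tfrac{\sigma^2}{2u^2})d_i$ with an implicit assumption (such as $d_i = O(1)$ or the Lipschitz constant being block-wise), or whether the factor should be $d$ rather than $d_i$ in the first inequality; pending that, the rest of the assembly is routine.
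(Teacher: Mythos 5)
Your reduction of the lemma to a single-time bound via the measurability of the timestamps in $\mathcal{F}_t$, your treatment of the additive noise (zero-mean cross term, variance $2\sigma^2/(4u^2)=\sigma^2/(2u^2)$ times $\mathbb{E}[\|z^i\|^2]\le d_i$), and your observation that the projection onto the convex set $\mathcal{S}_i(x^i,u)\ni 0$ is $1$-Lipschitz and contracts norms are all correct and match the paper. But the core estimate is where your proposal stalls, and your own diagnosis is accurate: bounding the difference quotient pointwise by $\bigl|\tfrac{f_j(x+uz)-f_j(x-uz)}{2u}\bigr|\le G\|z\|$ and then computing $\mathbb{E}[\|z\|^2\|z^i\|^2]=d_i(d+2)$ gives a bound of order $G^2 d\, d_i$, not $12G^2 d_i$. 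The lemma is nonetheless correct; the missing idea is \emph{Gaussian concentration of Lipschitz functions} (the trick from Lemmas 9--10 of Shamir (2017), which the paper invokes via Theorem 5.6 of Boucheron et al.). Define $\tilde h(z)=f_j\bigl(x+u\,\mathcal{P}_{\mathcal{S}(x,u)}[z]\bigr)$, which is $uG$-Lipschitz on all of $\mathbb{R}^d$, and center at $\bar h=\mathbb{E}_{z\sim\mathcal{N}(0,I_d)}[\tilde h(z)]$ instead of bounding $\tilde h(z)-\tilde h(-z)$ pointwise. Concentration gives $\mathbb{P}(|\tilde h(z)-\bar h|\ge s)\le 2e^{-s^2/(2u^2G^2)}$, hence $\mathbb{E}[(\tilde h(z)-\bar h)^4]\le 16u^4G^4$ \emph{independently of the dimension}. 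Then a per-coordinate Cauchy--Schwarz,
\begin{equation*}
\mathbb{E}\Bigl[\bigl|\tfrac{\tilde h(z)-\tilde h(-z)}{2u}z_k\bigr|^2\Bigr]
\le \frac{1}{u^2}\,\mathbb{E}\bigl[z_k^2(\tilde h(z)-\bar h)^2\bigr]
\le \frac{1}{u^2}\sqrt{\mathbb{E}[z_k^4]}\sqrt{\mathbb{E}[(\tilde h(z)-\bar h)^4]}
\le 12G^2,
\end{equation*}
summed over the $d_i$ coordinates of the block $z^i$, yields $12G^2d_i$. In other words, the fluctuation of $f_j(x+uz)$ over the Gaussian is $O(uG)$, not $O(uG\sqrt{d})$, and exploiting this is exactly what turns your $d\cdot d_i$ into $d_i$.

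A secondary point: you should not try to bound $\mathbb{E}[\|z\|^2\|z^i\|^2]$ at all --- the correct argument never decouples the difference quotient from the coordinate $z_k$ via a norm bound on $z$; it keeps the product $z_k^2(\tilde h(z)-\bar h)^2$ together and splits it only by Cauchy--Schwarz, which is what makes the fourth moment of the centered function (dimension-free) rather than the second moment of $\|z\|$ (dimension-dependent) the controlling quantity. With that substitution your assembly of the three displayed inequalities goes through as you planned.
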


The following lemma will be used for bounding the error in the gradient estimation \eqref{eq:partial_grad_est_final} caused by communication delays.
\begin{lemma}\label{lemma:dist_decision_var}
For any $t\geq 0$, we have
\begin{align*}
\mathbb{E}\!\left[
\|x^i(t)-x^i(\tau^i_j(t))\|^2
\right]
\leq\ &
\eta^2(b_{ij}+\Delta)^2
\left(12G^2
+\frac{\sigma^2}{2u^2}\right)d_i,
\\
\mathbb{E}\!\left[
\|x(t)-x(\tau^i_j(t))\|^2
\right]
\leq\ &
\eta^2(b_{ij}+\Delta)^2
\left(12G^2
+\frac{\sigma^2}{2u^2}\right)d
\end{align*}
\end{lemma}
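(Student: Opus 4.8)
The plan is to exploit that consecutive iterates move slowly---one mirror-descent step changes $x^i$ by at most $\eta$ times the norm of the gradient estimator, and $x^i(t)$ and $x^i(\tau^i_j(t))$ are separated by at most $b_{ij}+\Delta$ such steps, each of whose size is controlled by Lemma~\ref{lemma:bound_2moment_grad_est_noisy}. So the first ingredient I would establish is the one-step displacement bound $\|x^i(s+1)-x^i(s)\|\le\eta\|G^i(s)\|$. In the unconstrained nonconvex setting this is immediate, since there $x^i(s+1)=x^i(s)-\eta G^i(s)$. In the constrained convex setting, writing the step \eqref{eq:mirror_descent_multi_agent_modified} as $x^+=\argmin_{x\in(1-\delta)\mathcal{X}_i}\{\langle G^i(s),x-x^i(s)\rangle+\tfrac1\eta\mathscr{D}_{\psi_i}(x|x^i(s))\}$, I would test the first-order optimality condition at $x^+$ against $x^i(s)$ to get $\langle G^i(s)+\tfrac1\eta(\nabla\psi_i(x^+)-\nabla\psi_i(x^i(s))),x^i(s)-x^+\rangle\ge0$, then combine this with the $1$-strong convexity of $\psi_i$ (which gives $\langle\nabla\psi_i(x^+)-\nabla\psi_i(x^i(s)),x^+-x^i(s)\rangle\ge\|x^+-x^i(s)\|^2$) and Cauchy--Schwarz to conclude $\|x^+-x^i(s)\|\le\eta\|G^i(s)\|$.

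Next I would telescope: $x^i(t)-x^i(\tau^i_j(t))=\sum_{s=\tau^i_j(t)}^{t-1}\big(x^i(s+1)-x^i(s)\big)$, adopting the convention $x^i(s):=x^i(0)$ for $s<0$ so that the increments with negative index---present only in the first few iterations, when $\tau^i_j(t)=-1$---vanish. To dispose of the random lower summation limit, I would observe that the delay bound $t-\tau^i_j(t)\le b_{ij}+\Delta$ forces the index set $\{\tau^i_j(t),\dots,t-1\}$ to lie inside the \emph{deterministic} set $\{\max(0,t-(b_{ij}+\Delta)),\dots,t-1\}$, which has at most $b_{ij}+\Delta$ elements. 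The triangle inequality, the one-step bound, and Cauchy--Schwarz then give
$$\big\|x^i(t)-x^i(\tau^i_j(t))\big\|^2\le\eta^2(b_{ij}+\Delta)\!\!\sum_{s=\max(0,\,t-(b_{ij}+\Delta))}^{t-1}\!\!\|G^i(s)\|^2,$$
and taking expectations and applying $\mathbb{E}[\|G^i(s)\|^2]\le(12G^2+\sigma^2/(2u^2))d_i$ from Lemma~\ref{lemma:bound_2moment_grad_est_noisy} to each of the at most $b_{ij}+\Delta$ summands yields the first asserted bound. The second follows from the identical argument applied to the full profile, using $\|x(s+1)-x(s)\|^2=\sum_k\|x^k(s+1)-x^k(s)\|^2\le\eta^2\|G(s)\|^2$ and $\mathbb{E}[\|G(s)\|^2]\le(12G^2+\sigma^2/(2u^2))d$.

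The only point needing care is the randomness of the delay $\tau^i_j(t)$, which I handle by enlarging the random summation range to the fixed superset above \emph{before} passing to expectations, so that Lemma~\ref{lemma:bound_2moment_grad_est_noisy} can be applied termwise. The one-step displacement bound in the mirror-descent case is the only place where the structure of the algorithm (strong convexity of the $\psi_i$) enters; everything else is a routine telescoping and Cauchy--Schwarz estimate.
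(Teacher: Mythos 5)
Your proposal is correct and follows essentially the same route as the paper's proof: a one-step displacement bound $\|x^i(s+1)-x^i(s)\|\le\eta\|G^i(s)\|$ obtained from the first-order optimality condition of the mirror-descent step together with the $1$-strong convexity of $\psi_i$ (the paper phrases this via the Bregman-divergence identity, you via gradient monotonicity — these are equivalent), followed by telescoping over at most $b_{ij}+\Delta$ increments, Cauchy--Schwarz, and termwise application of Lemma~\ref{lemma:bound_2moment_grad_est_noisy}. Your explicit enlargement of the random index set $\{\tau^i_j(t),\dots,t-1\}$ to a deterministic superset before taking expectations is a slightly more careful rendering of the same step the paper performs implicitly.
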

\begin{proof}{Proof.}
Since $\psi_i$ is $1$-strongly convex, we have
$$
\begin{aligned}
\|x^i(t+1)-x^i(t)\|^2
\leq\ &
\mathscr{D}_{\psi_i}(x^i(t)|x^i(t+1))
+\mathscr{D}_{\psi_i}(x^i(t+1)|x^i(t)).
\end{aligned}
$$
The first-order optimality condition of \eqref{eq:alg_basic_iter} can be written as
$$
\left\langle
-\eta G^i(t) -
\left(\nabla\psi_i(x^i(t \!+\! 1))
-\nabla\psi_i(x^i(t))\right),\tilde{x}^i-x^i(t\!+\!1)
\right\rangle
\leq 0,
\ \ 
\forall \tilde{x}^i\in(1\!-\!\delta)\mathcal{X}^i,
$$
and together with the identity
$
\langle\nabla\psi_i(x)-\nabla\psi_i(y),
x-y\rangle
=\mathscr{D}_{\psi_i}(y|x)+\mathscr{D}_{\psi_i}(x|y)
$,
we see that
$$
\begin{aligned}
\|x^i(t+1)-x^i(t)\|^2
\leq\ &
\left\langle\nabla\psi_i(x^i(t+1))-\nabla\psi_i(x^i(t)),
x^i(t+1)-x^i(t)\right\rangle \\
\leq\ &
-\eta\langle G^i(t),
x^i(t+1)-x^i(t)\rangle
\leq
\eta \|G^i(t)\|\|x^i(t+1)-x^i(t)\|,
\end{aligned}
$$
which implies $\|x^i(t+1)-x^i(t)\|\leq \eta \|G^i(t)\|$. We then have
$$
\begin{aligned}
& \mathbb{E}\!\left[
\|x^i(t)-x^i(\tau^i_j(t))\|^2
\right]
\leq
\mathbb{E}\!\left[
\left(
\sum\nolimits_{\tau=-b_{ij}-\Delta}^{-1}
\left\|\eta G^i(\tau)\right\|
\right)^2
\right]\\
\leq\ &
\eta^2(b_{ij}+\Delta)
\sum\nolimits_{\tau=-b_{ij}-\Delta}^{-1}
\mathbb{E}\!\left[
\|G^i(\tau)\|^2\right]
\leq \eta^2(b_{ij}+\Delta)^2
\left(12G^2
+\frac{\sigma^2}{2u^2}\right)d_i,
\end{aligned}
$$
where we used the assumption $t-\tau^i_j(t)\leq b_{ij}+\Delta$ in the first step, and used Lemma~\ref{lemma:bound_2moment_grad_est_noisy} in the last step. Finally,
$$
\begin{aligned}
& \mathbb{E}\!\left[
\|x(t)-x(\tau^i_j(t))\|^2
\right]
\leq
\mathbb{E}\!\left[
\left(
\sum\nolimits_{\tau=-b_{ij}-\Delta}^{-1}
\left\|\eta G(\tau)\right\|
\right)^2
\right] \\
\leq\ &
\eta^2(b_{ij}+\Delta)
\sum\nolimits_{\tau=-b_{ij}-\Delta}^{-1}
\mathbb{E}\!\left[
\|G(\tau)\|^2\right]
\leq
\eta^2(b_{ij}+\Delta)^2
\left(12G^2
+\frac{\sigma^2}{2u^2}\right)d,
\end{aligned}
$$
which completes the proof.
\hfill\Halmos
\end{proof}
\begin{remark}
Lemmas~\ref{lemma:bound_2moment_grad_est_noisy} and \ref{lemma:dist_decision_var} do not assume the convexity of the objective functions nor addition conditions on the set $\mathcal{X}$. They will be used in the analysis for both the convex and the nonconvex settings.
\end{remark}

\subsection{Analysis of the Constrained Convex Setting}
In this subsection, we analyze the complexity of Algorithm~\ref{alg:alg_zero_order} for the constrained convex setting.

First, the following is a standard result for mirror descent (see \citet[Eq. (4.21)]{beck2003mirror}).
\begin{lemma}\label{lemma:mirror_descent_basic}
Let $\tilde{x}\in (1-\delta)\mathcal{X}$ be arbitrary. Then
\begin{equation}\label{eq:standard_mirror_descent}
\frac{1}{\eta}\left(\mathscr{D}_\psi(\tilde{x}|x(t+1))
-\mathscr{D}_\psi(\tilde{x}|x(t))\right)
\leq
\langle G(t),\tilde{x}-x(t)\rangle
+\frac{\eta}{2}\|G(t)\|^2.
\end{equation}
\end{lemma}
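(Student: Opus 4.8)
This is the standard one-step inequality for mirror descent, and the plan is to derive it directly from the first-order optimality condition of the update \eqref{eq:alg_basic_iter} (the same optimality condition already exploited in the proof of Lemma~\ref{lemma:dist_decision_var}), combined with the three-point identity for Bregman divergences. First I would record the optimality condition: since $x(t+1)$ minimizes the convex function $x\mapsto \langle G(t),x-x(t)\rangle+\frac{1}{\eta}\mathscr{D}_\psi(x|x(t))$ over the convex set $(1-\delta)\mathcal{X}$, and its gradient at $x(t+1)$ equals $G(t)+\frac{1}{\eta}\big(\nabla\psi(x(t+1))-\nabla\psi(x(t))\big)$, the variational inequality gives, for every $\tilde{x}\in(1-\delta)\mathcal{X}$,
\[
\big\langle \eta G(t)+\nabla\psi(x(t+1))-\nabla\psi(x(t)),\ \tilde{x}-x(t+1)\big\rangle \ge 0 .
\]

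Next I would apply the three-point identity $\langle\nabla\psi(b)-\nabla\psi(c),a-b\rangle=\mathscr{D}_\psi(a|c)-\mathscr{D}_\psi(a|b)-\mathscr{D}_\psi(b|c)$ with $a=\tilde{x}$, $b=x(t+1)$, $c=x(t)$ (this follows by expanding the definition of $\mathscr{D}_\psi$, and is the ``two-term'' companion of the identity $\langle\nabla\psi(x)-\nabla\psi(y),x-y\rangle=\mathscr{D}_\psi(y|x)+\mathscr{D}_\psi(x|y)$ used earlier). Substituting turns the displayed inequality into
\[
\eta\langle G(t),\tilde{x}-x(t+1)\rangle+\mathscr{D}_\psi(\tilde{x}|x(t))-\mathscr{D}_\psi(\tilde{x}|x(t+1))-\mathscr{D}_\psi(x(t+1)|x(t))\ge 0 .
\]
Then I would split $\tilde{x}-x(t+1)=(\tilde{x}-x(t))+(x(t)-x(t+1))$ and bound the extra inner product by Young's inequality, $\eta\langle G(t),x(t)-x(t+1)\rangle\le \frac{\eta^2}{2}\|G(t)\|^2+\frac{1}{2}\|x(t+1)-x(t)\|^2$. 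Since each $\psi_i$ is $1$-strongly convex, $\psi=\sum_i\psi_i$ is $1$-strongly convex, so $\mathscr{D}_\psi(x(t+1)|x(t))\ge\frac{1}{2}\|x(t+1)-x(t)\|^2$; this cancels the $\frac{1}{2}\|x(t+1)-x(t)\|^2$ term exactly. Rearranging and dividing by $\eta>0$ yields precisely \eqref{eq:standard_mirror_descent}.

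There is no genuine obstacle here; the only points that merit a moment's care are (i) writing the variational inequality over the correct feasible set $(1-\delta)\mathcal{X}$, so that the conclusion holds for all $\tilde{x}$ in that same set, and (ii) checking that the two quadratic-in-step-length terms cancel \emph{exactly} rather than up to a constant factor, which is the case because $\psi$ has strong-convexity modulus exactly $1$. Alternatively, since the statement is verbatim the standard bound, one may simply cite \citet[Eq.~(4.21)]{beck2003mirror}.
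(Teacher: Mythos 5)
Your proof is correct and is exactly the standard argument behind this bound: the paper itself offers no derivation, simply citing \citet[Eq.~(4.21)]{beck2003mirror}, and your chain of variational inequality, three-point identity, Young's inequality, and the $1$-strong convexity of $\psi=\sum_i\psi_i$ (which gives $\mathscr{D}_\psi(x|y)\ge\frac12\|x-y\|^2$ blockwise) reproduces that cited result verbatim. Nothing to add.
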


A standard routine in analysing the complexity of mirror-descent-type algorithms is to 1) bound the expectation of each term on the right-hand side of~\eqref{eq:standard_mirror_descent}, and 2) take the telescoping sum to cancel the terms from the left-hand side of~\eqref{eq:standard_mirror_descent}.

\vspace{3pt}
\noindent {\bf Step 1: Bounding the expectation of the right-hand side of~\eqref{eq:standard_mirror_descent}.} It can be seen that the expectation of $\eta\|G(t)\|^2/2$ can be bounded via Lemma~\ref{lemma:bound_2moment_grad_est_noisy}. In order to bound the expectation of $\langle G(t),\tilde{x}-x(t)\rangle$, we note that
\begin{equation}\label{eq:mirror_descent_temp}
\begin{aligned}
\mathbb{E}\!\left[\langle G(t),\tilde{x}-x(t)\rangle\right]
=\ &
\mathbb{E}\!\left[\frac{1}{n}
\sum\nolimits_{i,j=1}^n
\!\!\left\langle
D_j(\tau^i_j(t)) z^i(\tau^i_j(t)),\tilde{x}^i-x^i(\tau^i_j(t))
\right\rangle\right] \\
&
\!\! +
\mathbb{E}\left[\frac{1}{n}
\sum\nolimits_{i,j=1}^n
\!\!\left\langle
D_j(\tau^i_j(t)) z^i(\tau^i_j(t)),x^i(\tau^i_j(t))-x^i(t)
\right\rangle\right].
\end{aligned}
\end{equation}
The following two lemmas bound the two terms on the right-hand side of \eqref{eq:mirror_descent_temp} respectively.

\begin{lemma}\label{lemma:mirror_descent_temp_term1}
Let $\tilde{x}=(\tilde{x}^1,\ldots,\tilde{x}^n)\in\mathcal{X}$ be arbitrary. Suppose
$
0<u\leq \mfrac{\delta\underline{r}}{3\sqrt{d}}
$.
Then for $t\geq B$, we have
$$
\begin{aligned}
& \frac{1}{\kappa(u)}
\,\mathbb{E}\!\left[
\frac{1}{n}
\sum\nolimits_{i,j=1}^n
\left\langle
D_j(\tau^i_j(t)) z^i(\tau^i_j(t)),\tilde{x}^i-x^i(\tau^i_j(t))
\right\rangle\right] \\
\leq\ &
\mathbb{E}\!\left[f(\tilde{x})-f(x(t))\right]
+\min\!\left\{uG\sqrt{d},\frac{u^2Ld}{2}\right\}
+
2\sqrt{3}
\eta\bar{\mathfrak{b}}
\left(G^2
+\frac{\sigma^2}{24u^2}\right)\sqrt{d} \\
&
+
\eta L\bar{b}\sqrt{nd}
\sqrt{12G^2+\frac{\sigma^2}{2u^2}}\cdot\overline{R}
+
\frac{2G\overline{R}^2\sqrt{n}}{\kappa(u)u}
\exp\!\left(\frac{d}{2}
-\frac{\delta^2\underline{r}^2}{4u^2}\right),
\end{aligned}
$$
where $\kappa(u)\in[199/200,1]$.
\end{lemma}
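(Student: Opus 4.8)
The plan is to convert the two‑point quantities $D_j(\tau^i_j(t))z^i(\tau^i_j(t))$ into scaled smoothed partial gradients, run a convexity argument against the current iterate $x(t)$, and pay for the delay with the second‑moment and displacement estimates of Lemmas~\ref{lemma:bound_2moment_grad_est_noisy} and \ref{lemma:dist_decision_var}. First I would fix $(i,j)$, set $s=\tau^i_j(t)$, and note $s\ge0$ (since $t\ge B$ and $t-s\le b_{ij}+\Delta\le B$), so that $D_j(s)$ is a genuine difference quotient and $z^i(s)\sim\mathcal Z^i(x^i(s),u)$ a genuine perturbation. Conditioning on $\mathcal F_s$ makes $x(s)$, hence $\tilde x^i-x^i(s)$, measurable; writing $D_j(s)z^i(s)$ as the agent‑$i$ block of $\mathsf G_{f_j}(x(s);u,z(s))$ plus $\tfrac1{2u}(\varepsilon_j^+(s)-\varepsilon_j^-(s))z^i(s)$ — whose conditional mean is $0$, by independence of the noise from $z^i(s)$ and symmetry of $z^i(s)$ about the origin — and invoking Lemma~\ref{lemma:2point_bias} (legitimate since $0<u\le\delta\underline{r}/(3\sqrt d)$), I get $\mathbb E[D_j(s)z^i(s)\mid\mathcal F_s]=\kappa(u)\nabla^i f_j^u(x(s))+[e_j(x(s))]_i$ with $\|e_j(x(s))\|\le\tfrac{2G\overline R}{u}\exp(\tfrac d2-\tfrac{\delta^2\underline{r}^2}{4u^2})$. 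Summing over $i,j$ and dividing by $\kappa(u)\in[199/200,1]$ rewrites the left side as $\tfrac1n\sum_{i,j}\mathbb E[\langle\nabla^i f_j^u(x(\tau^i_j(t))),\tilde x^i-x^i(\tau^i_j(t))\rangle]$ plus $\tfrac1{\kappa(u)n}\sum_{i,j}\mathbb E[\langle[e_j(x(\tau^i_j(t)))]_i,\tilde x^i-x^i(\tau^i_j(t))\rangle]$; the second sum is bounded by Cauchy–Schwarz over the agent blocks (using $\sum_i\|[e_j]_i\|^2=\|e_j\|^2$) and the boundedness of the $\mathcal X_i$, which yields the exponential term $\tfrac{2G\overline R^2\sqrt n}{\kappa(u)u}\exp(\tfrac d2-\tfrac{\delta^2\underline{r}^2}{4u^2})$ (after using $\kappa(u)\ge199/200$ and $\sqrt n\ge1$ to absorb numerical factors).

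For the remaining sum I would pivot through the current iterate, splitting $\tilde x^i-x^i(\tau^i_j(t))=(\tilde x^i-x^i(t))+(x^i(t)-x^i(\tau^i_j(t)))$ and $\nabla^i f_j^u(x(\tau^i_j(t)))=\nabla^i f_j^u(x(t))+(\nabla^i f_j^u(x(\tau^i_j(t)))-\nabla^i f_j^u(x(t)))$, producing three groups. \emph{Synchronous group:} summing over $j$ first gives $\tfrac1n\sum_j\nabla^i f_j^u=\nabla^i f^u$ (as $f^u=\tfrac1n\sum_j f_j^u$ by linearity of the smoothing), so this group equals $\mathbb E[\langle\nabla f^u(x(t)),\tilde x-x(t)\rangle]\le\mathbb E[f^u(\tilde x)-f^u(x(t))]$ by convexity of $f^u$ (inherited from convexity of $f$); using $|f^u-f|\le\min\{uG\sqrt d,\tfrac12u^2Ld\}$ for the $\tilde x$‑term and Jensen's inequality $f^u\ge f$ (the smoothing has zero mean) for the $x(t)$‑term, only one copy of the bias survives, giving $\mathbb E[f(\tilde x)-f(x(t))]+\min\{uG\sqrt d,\tfrac12u^2Ld\}$. \emph{Gradient‑drift group:} bound $\|\nabla^i f_j^u(x(\tau^i_j(t)))-\nabla^i f_j^u(x(t))\|\le L\|x(\tau^i_j(t))-x(t)\|$ by $L$‑smoothness of $f_j^u$, combine with $\|\tilde x^i-x^i(t)\|\le2\overline R_i$, Cauchy–Schwarz over $i$, Jensen, and $\mathbb E\|x(t)-x(\tau^i_j(t))\|^2\le\eta^2(b_{ij}+\Delta)^2(12G^2+\tfrac{\sigma^2}{2u^2})d$ from Lemma~\ref{lemma:dist_decision_var}; summing the $(b_{ij}+\Delta)$‑factors through the definition of $\bar b$ gives $\eta L\bar b\sqrt{nd}\,\overline R\sqrt{12G^2+\tfrac{\sigma^2}{2u^2}}$. \emph{Iterate‑drift group:} pair the agent‑$i$ block $x^i(t)-x^i(\tau^i_j(t))$ — controlled by $\mathbb E\|x^i(t)-x^i(\tau^i_j(t))\|^2\le\eta^2(b_{ij}+\Delta)^2(12G^2+\tfrac{\sigma^2}{2u^2})d_i$ from Lemma~\ref{lemma:dist_decision_var} — with a quantity of comparable magnitude via Young's inequality with a per‑pair weight and the per‑block second‑moment bound of Lemma~\ref{lemma:bound_2moment_grad_est_noisy}; the sums of the $(b_{ij}+\Delta)$‑ and $d_i$‑factors now reproduce the dimension‑weighted average, yielding $2\sqrt3\,\eta\bar{\mathfrak b}(G^2+\tfrac{\sigma^2}{24u^2})\sqrt d$. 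Adding the five pieces finishes the proof.

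The hard part will be the delay bookkeeping in the second step, and in particular getting the right network constant in each group: the delay $\tau^i_j(t)$ is indexed by the \emph{pair} $(i,j)$ while the dimension factors are indexed by the single agent $i$, so the Cauchy–Schwarz groupings must be organized differently when the companion factor is the full‑space radius $\overline R$ (which yields the unweighted average $\bar b$) versus an agent‑$i$ block quantity of size $\sqrt{d_i}$ (which yields the dimension‑weighted average $\bar{\mathfrak b}$), and one must be careful not to pick up spurious powers of $n$ or $d$ when collapsing the double sums. A secondary subtlety is that the conditioning in the first step is legitimate even though $x^i(\tau^i_j(t))$ is built from a pair‑dependent past time — this relies on $\mathcal F_t$ being generated so as to include all the delay times $\tau^i_j(s)$.
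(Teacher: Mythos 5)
Your proposal is correct and follows essentially the same route as the paper: condition on the realized delay time, invoke Lemma~\ref{lemma:2point_bias} to replace $D_j z^i$ by $\kappa(u)\nabla^i f_j^u$ plus an exponentially small bias, pivot the smoothed-gradient inner product through the current iterate $x(t)$, handle the synchronous piece by convexity of $f^u$ together with $f^u\geq f$ and the one-sided bias bound, and absorb the two drift pieces via Lemma~\ref{lemma:dist_decision_var} and Young/Cauchy--Schwarz with exactly the weightings that produce $\bar{b}$ (when paired with $\overline{R}_i$) and $\bar{\mathfrak{b}}$ (when paired with $d_i$-weighted quantities). One bookkeeping caution: splitting \emph{both} factors produces four cross terms, not three, and your description leaves the term $\langle \nabla^i f_j^u(x(\tau^i_j(t)))-\nabla^i f_j^u(x(t)),\,x^i(t)-x^i(\tau^i_j(t))\rangle$ unassigned (it is $O(\eta^2)$ and not covered by the stated bound); the paper avoids this by splitting only the gradient factor, i.e.\ keeping the gradient-drift group paired with the full $\tilde{x}^i-x^i(\tau^i_j(t))$ — which costs nothing since that is bounded by $\overline{R}_i$ just as well — and pairing the iterate-drift $x^i(t)-x^i(\tau^i_j(t))$ with $\nabla^i f_j^u(x(t))$ evaluated at the \emph{common} point so that $\sum_i\|\nabla^i f_j^u(x(t))\|^2=\|\nabla f_j^u(x(t))\|^2\leq G^2$, a collapse you would lose if the companion were evaluated at the pair-dependent times.
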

\begin{proof}{Proof Sketch of Lemmas~\ref{lemma:mirror_descent_temp_term1}.}
We only provide proof sketches here; the whole proof can be found in Appendix~\ref{sec:proof:lemma:mirror_descent_temp_term1}. The first step is to apply Lemma~\ref{lemma:2point_bias} to obtain
$$
\begin{aligned}
& \mathbb{E}\!\left[
\frac{1}{n}
\sum\nolimits_{i,j=1}^n
\left\langle
D_j(\tau^i_j(t)) z^i(\tau^i_j(t)),\tilde{x}^i-x^i(\tau^i_j(t))
\right\rangle\right] \\
\leq\ &
\frac{\kappa(u)}{n}
\mathbb{E}\!\left[
\sum\nolimits_{i,j=1}^n
\left\langle\nabla^i f_j^{u}\big(x(\tau^i_j(t))\big),
\tilde{x}^i-x^i(\tau^i_j(t))\right\rangle
\right]
+
\frac{2G\overline{R}}{u}
\exp\!\left(\frac{d}{2}
-\frac{\delta^2\underline{r}^2}{4u^2}\right)
\sqrt{n}\cdot \overline{R}.
\end{aligned}
$$
We then notice that
\begin{align*}
& \frac{1}{n}\sum\nolimits_{i,j=1}^n
\left\langle \nabla^i f_j^{u}\big(x(\tau^i_j(t))\big),
\tilde{x}^i
-x^i(\tau^i_j(t))\right\rangle \\
=\ &
\left\langle
\nabla f^{u}(x(t)),
\tilde{x}
-x(t)
\right\rangle
+ 
\frac{1}{n}\sum\nolimits_{i,j=1}^n
\left\langle
\nabla^i f_j^{u}(x(t)),
x^i(t)-x^i(\tau^i_j(t))
\right\rangle \\
& +
\frac{1}{n}\sum\nolimits_{i,j=1}^n
\left\langle
\nabla^i f_j^{u}\big(x(\tau^i_j(t))\big)
-\nabla^i f_j^{u}(x(t)),
\tilde{x}^i-x^i(\tau^i_j(t))
\right\rangle,
\end{align*}
where $f^u(x)\coloneqq \frac{1}{n}\sum_j f^u_j(x)$, and it can be shown that $f^u$ is convex and satisfies $f^u(x)\geq f(x)$. By Lemma~\ref{lemma:2point_bias}, Lemma~\ref{lemma:dist_decision_var}, we can bound the expectation of each term by
$$
\mathbb{E}\!\left[
\left\langle
\nabla f^{u}(x(t)),
\tilde{x}
-x(t)
\right\rangle
\right]
\leq\mathbb{E}\left[
f(\tilde x)-f(x(t))
\right]
+\min\left\{
uG\sqrt{d},\frac{1}{2}u^2Ld
\right\},
$$
$$
\mathbb{E}\!\left[\frac{1}{n}\sum\nolimits_{i,j=1}^n
\left\langle
\nabla^i f_j^u(x(t)),
x^i(t)-x^i(\tau^i_j(t))
\right\rangle\right]
\leq
2\sqrt{3}\cdot \eta \bar{\mathfrak{b}}\left(G^2
+ \frac{\sigma^2}{24u^2}\right)\sqrt{d},
$$
and
\begin{align*}
\mathbb{E}\!\left[\frac{1}{n}\sum\nolimits_{i,j=1}^n
\!\!\left\langle
\nabla^i\! f_j^u\big(x(\tau^i_j(t))\big)
\!-\! \nabla^i\! f_j^u(x(t)),
\tilde{x}^i \!-\! x^i(\tau^i_j(t))
\right\rangle\right]
\leq
\eta L\bar{b}\sqrt{nd}
\sqrt{12G^2 \!+\! \frac{\sigma^2}{2u^2}}\overline{R}.
\end{align*}
We extensively used the Peter--Paul inequality in deriving these bounds. Summarizing these results gives Lemma~\ref{lemma:mirror_descent_temp_term1}.
\hfill\Halmos
\end{proof}

\begin{lemma}\label{lemma:mirror_descent_temp_term2}
For any $t\geq 0$,
$$
\mathbb{E}\!\left[\frac{1}{n}
\sum\nolimits_{i,j=1}^n
\left\langle
D_j(\tau^i_j(t)) z^i(\tau^i_j(t)),x^i(\tau^i_j(t))-x^i(t)
\right\rangle\right]
\leq
\eta\left(12G^2+\frac{\sigma^2}{2u^2}\right)
\bar{\mathfrak{b}} d.
$$
\end{lemma}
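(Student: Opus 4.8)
The plan is to bound each summand of $\frac1n\sum_{i,j}\langle D_j(\tau^i_j(t))z^i(\tau^i_j(t)),x^i(\tau^i_j(t))-x^i(t)\rangle$ individually and then aggregate. Fix a pair $(i,j)$ and abbreviate $s=\tau^i_j(t)$; when $i=j$ we have $s=t$ (agent $i$ records its own data with timestamp $t$), so the summand vanishes and we may assume $i\neq j$, in which case $b_{ij}+\Delta\geq 1>0$. First I would apply Cauchy--Schwarz,
$$
\left\langle D_j(s)\,z^i(s),\, x^i(s)-x^i(t)\right\rangle
\leq
\big\|D_j(s)\,z^i(s)\big\|\cdot\big\|x^i(t)-x^i(s)\big\|,
$$
followed by Young's inequality with the weight $\eta(b_{ij}+\Delta)$, chosen precisely so that the two resulting terms will balance after taking expectations:
$$
\big\|D_j(s)\,z^i(s)\big\|\cdot\big\|x^i(t)-x^i(s)\big\|
\leq
\frac{\eta(b_{ij}+\Delta)}{2}\big\|D_j(s)\,z^i(s)\big\|^2
+\frac{1}{2\eta(b_{ij}+\Delta)}\big\|x^i(t)-x^i(s)\big\|^2.
$$

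Next I would take expectations of the two terms using the auxiliary results already established. For the first term, Lemma~\ref{lemma:bound_2moment_grad_est_noisy} gives $\mathbb{E}\big[\|D_j(\tau)z^i(\tau)\|^2\,\big|\,\mathcal{F}_\tau\big]\leq\big(12G^2+\sigma^2/(2u^2)\big)d_i$ for each fixed $\tau$; since every $\tau^i_j(t)$ belongs to the generating set of $\mathcal{F}_0$ (hence may be treated as deterministic) and lies in $\{t-b_{ij}-\Delta,\dots,t\}$, the tower property yields $\mathbb{E}\big[\|D_j(s)z^i(s)\|^2\big]\leq\big(12G^2+\sigma^2/(2u^2)\big)d_i$ even with the random index $s$. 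For the second term, Lemma~\ref{lemma:dist_decision_var} directly provides $\mathbb{E}\big[\|x^i(t)-x^i(\tau^i_j(t))\|^2\big]\leq\eta^2(b_{ij}+\Delta)^2\big(12G^2+\sigma^2/(2u^2)\big)d_i$. Substituting these two bounds, the two contributions coincide, and I obtain
$$
\mathbb{E}\!\left[\left\langle D_j(s)\,z^i(s),\,x^i(s)-x^i(t)\right\rangle\right]
\leq
\eta(b_{ij}+\Delta)\left(12G^2+\frac{\sigma^2}{2u^2}\right)d_i.
$$

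Finally I would sum over $i,j$, divide by $n$, and reduce the claim to the elementary inequality $\frac{1}{n}\sum_{i,j=1}^n(b_{ij}+\Delta)d_i\leq\bar{\mathfrak{b}}\,d$. This is exactly where the (dimension-weighted) definition of $\bar{\mathfrak{b}}$ enters: by Cauchy--Schwarz over the index pairs,
$$
\sum_{i,j=1}^n(b_{ij}+\Delta)d_i
=\sum_{i,j=1}^n\big((b_{ij}+\Delta)\sqrt{d_i}\big)\sqrt{d_i}
\leq\left(\sum_{i,j=1}^n(b_{ij}+\Delta)^2 d_i\right)^{1/2}\left(\sum_{i,j=1}^n d_i\right)^{1/2},
$$
and since $\sum_{i,j}d_i=nd$ and $\sum_{i,j}(b_{ij}+\Delta)^2 d_i=\bar{\mathfrak{b}}^2\,nd$ by definition, the right-hand side equals $\bar{\mathfrak{b}}\cdot nd$, giving the stated bound.

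The argument is essentially a routine computation; the only points deserving care are (i) picking the Young's-inequality weight $\eta(b_{ij}+\Delta)$ so that the ``gradient-estimator variance'' term coming from Lemma~\ref{lemma:bound_2moment_grad_est_noisy} and the ``drift-due-to-delay'' term coming from Lemma~\ref{lemma:dist_decision_var} combine into a single clean expression, and (ii) justifying that the random delay index $\tau^i_j(t)$ can be pulled out of the conditional expectation, which follows from its $\mathcal{F}_0$-measurability. I expect the index-level Cauchy--Schwarz step tying $\sum_{i,j}(b_{ij}+\Delta)d_i$ to $\bar{\mathfrak{b}}$ to be the only place where the particular form of $\bar{\mathfrak{b}}$ is genuinely used, and — consistent with the remark following Lemma~\ref{lemma:dist_decision_var} — neither convexity of $f$ nor compactness of $\mathcal{X}$ is invoked anywhere.
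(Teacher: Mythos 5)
Your proof is correct and follows essentially the same route as the paper's: Young's (Peter--Paul) inequality to split the inner product, followed by Lemma~\ref{lemma:bound_2moment_grad_est_noisy} for the quotient term and Lemma~\ref{lemma:dist_decision_var} for the delay-drift term. The only cosmetic difference is that the paper uses the uniform weight $\eta\bar{\mathfrak{b}}$ in Young's inequality, which makes the final aggregation immediate from the definition of $\bar{\mathfrak{b}}$, whereas your pair-dependent weight $\eta(b_{ij}+\Delta)$ requires the extra index-level Cauchy--Schwarz step at the end; both yield the identical bound.
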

\begin{proof}{Proof.}
We have
$$
\begin{aligned}
& \mathbb{E}\!\left[\frac{1}{n}
\sum\nolimits_{i,j=1}^n
\left\langle
D_j(\tau^i_j(t)) z^i(\tau^i_j(t)),x^i(\tau^i_j(t))-x^i(t)
\right\rangle\right] \\
\leq\ &
\frac{1}{2n}
\sum\nolimits_{i,j=1}^n
\mathbb{E}\!\left[
\eta\bar{\mathfrak{b}} \left\|D_j(\tau^i_j(t))z^i(\tau^i_j(t))\right\|^2
+ \eta^{-1}\bar{\mathfrak{b}}^{-1}\|x^i(\tau^i_j(t))-x^i(t)\|^2\right] \\
\leq\ &
\frac{1}{2n}
\left(12G^2+\frac{\sigma^2}{2u^2}\right)
\sum\nolimits_{i,j=1}^n
\left[
\eta \bar{\mathfrak{b}} d_i
+
\eta\bar{\mathfrak{b}}^{-1}(b_{ij}+\Delta)^2 d_i
\right]
=
\eta\left(12G^2+\frac{\sigma^2}{2u^2}\right)
\bar{\mathfrak{b}} d,
\end{aligned}
$$
where we used Lemmas~\ref{lemma:bound_2moment_grad_est_noisy} and \ref{lemma:dist_decision_var} in the second step.
\hfill\Halmos
\end{proof}

\vspace{3pt}
\noindent {\bf Step 2: Taking the telescoping sum.} By taking the telescoping sum of~\eqref{eq:standard_mirror_descent} and summarizing the previous results, we get the following theorem.

\begin{theorem}\label{theorem:convex_main}
Let $x^\ast$ be a minimizer of $f(x)$ over $x\in\mathcal{X}$. Let $T\geq B$, and let
$
\bar{x}(T) = \mfrac{1}{T\!-\!B\!+\!1}\sum_{t=B}^{T}
x(t)
$.
Denote $\overline{\mathscr{D}}\coloneqq \max_{x\in\mathcal{X}}\mathscr{D}_\psi(x^\ast|x)$. Suppose
$
0<u\leq \mfrac{\delta\underline{r}}{3\sqrt{d}}
$.
Then
$$
\begin{aligned}
\mathbb{E}\!\left[f(\bar{x}(T))\right] -
f(x^\ast)
\leq\ &
\frac{5\overline{\mathscr{D}}}{4\eta (T\!-\!B\!+\!1)}
+uG\sqrt{d}
+
16
\eta
\bigg[G^2\!+\!\Big(\mfrac{L\overline{R}}{4}\Big)^{\!2}\bigg]\!\!\left(\bar{\mathfrak{b}}\!+\!\frac{1}{2}\right)
\!\!\left(\!\sqrt{d}\!+\!\frac{1}{6}\right)^{\!2} \\
&
\!\!+
\frac{2\eta\sigma^2}{3u^2}
\!\left(\bar{\mathfrak{b}}\!+\!\frac{1}{2}\right)
\!\!\left(\!\sqrt{d}\!+\!\frac{1}{6}\right)^{\!2}
+
\frac{5G\overline{R}^2\sqrt{n}}{2u}
\exp\!\left(\frac{d}{2}
\!-\!\frac{\delta^2\underline{r}^2}{4u^2}
\right)
+ G\overline{R}\delta.
\end{aligned}
$$
If it is known that $x^\ast\in\operatorname{int}\mathcal{X}$, then
$$
\begin{aligned}
\mathbb{E}\!\left[f(\bar{x}(T))\right] -
f(x^\ast)
\leq\ &
\frac{5\overline{\mathscr{D}}}{4\eta (T\!-\!B\!+\!1)}
+
\frac{u^2Ld}{2}
+
16
\eta
\bigg[G^2\!+\!\Big(\mfrac{L\overline{R}}{4}\Big)^{\!2}\bigg]\!\!\left(\bar{\mathfrak{b}}\!+\!\frac{1}{2}\right)
\!\!\left(\!\sqrt{d}\!+\!\frac{1}{6}\right)^{\!2} \\
&
\!\!+\!
\frac{2\eta\sigma^2}{3u^2}
\!\left(\bar{\mathfrak{b}}\!+\!\frac{1}{2}\right)
\!\!\left(\!\sqrt{d}\!+\!\frac{1}{6}\right)^{\!2}
\!+\!
\frac{5G\overline{R}^2\sqrt{n}}{2u}
\exp\!\left(\frac{d}{2}
\!-\!\frac{\delta^2\underline{r}^2}{4u^2}
\right)
\!+\!\frac{L\overline{R}^2\delta^2}{2}.
\end{aligned}
$$
\end{theorem}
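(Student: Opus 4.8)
The plan is to convert the standard mirror-descent inequality of Lemma~\ref{lemma:mirror_descent_basic} into a bound on $\mathbb{E}[f(\bar{x}(T))]-f(x^\ast)$ by a telescoping-sum argument, feeding in the term-by-term estimates of Lemmas~\ref{lemma:mirror_descent_temp_term1} and \ref{lemma:mirror_descent_temp_term2}, the second-moment bound of Lemma~\ref{lemma:bound_2moment_grad_est_noisy}, and Jensen's inequality for the convex $f$.

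First I would fix the comparator to be the shrunk optimum $\tilde{x}=(1-\delta)x^\ast$. Since $0\in\mathcal{X}$ and $\mathcal{X}$ is convex we have $\tilde{x}\in(1-\delta)\mathcal{X}\subseteq\mathcal{X}$, so Lemma~\ref{lemma:mirror_descent_basic} (which requires the comparator in $(1-\delta)\mathcal{X}$) and Lemma~\ref{lemma:mirror_descent_temp_term1} (which requires it in $\mathcal{X}$) both apply with this choice. Summing \eqref{eq:standard_mirror_descent} over $t=B,\dots,T$, the left-hand side telescopes, and since $\mathscr{D}_\psi\ge 0$ I obtain
$$\sum_{t=B}^{T}\langle G(t),x(t)-\tilde{x}\rangle\ \le\ \frac{\mathscr{D}_\psi(\tilde{x}\,|\,x(B))}{\eta}+\frac{\eta}{2}\sum_{t=B}^{T}\|G(t)\|^2.$$
Taking expectations I bound $\mathbb{E}\|G(t)\|^2\le(12G^2+\sigma^2/(2u^2))d$ by Lemma~\ref{lemma:bound_2moment_grad_est_noisy}, and for each $t\ge B$ I expand $\mathbb{E}[\langle G(t),\tilde{x}-x(t)\rangle]$ through \eqref{eq:mirror_descent_temp} into the two pieces controlled by Lemma~\ref{lemma:mirror_descent_temp_term1} (this is where the factor $\kappa(u)\in[199/200,1]$, the smoothing bias $f^u\ge f$, and the delay error terms enter) and Lemma~\ref{lemma:mirror_descent_temp_term2}. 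Since the error terms in Lemma~\ref{lemma:mirror_descent_temp_term1} are uniform in $t$ while its only $t$-dependent contribution is $\mathbb{E}[f(\tilde{x})-f(x(t))]$, rearranging, dividing by $\kappa(u)(T-B+1)$, and applying Jensen's inequality $f(\bar{x}(T))\le\frac{1}{T-B+1}\sum_{t=B}^{T}f(x(t))$ gives: $\mathbb{E}[f(\bar{x}(T))]-f(\tilde{x})$ is at most $\mathscr{D}_\psi(\tilde{x}\,|\,x(B))/(\kappa(u)\eta(T-B+1))$ plus a $1/\kappa(u)$-weighted sum of the error terms of the three lemmas.

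It then remains to pass from $\tilde{x}=(1-\delta)x^\ast$ back to $x^\ast$. By $G$-Lipschitzness $f((1-\delta)x^\ast)-f(x^\ast)\le G\|\delta x^\ast\|\le G\overline{R}\delta$, which produces the $G\overline{R}\delta$ term of the first bound; if moreover $x^\ast\in\operatorname{int}\mathcal{X}$, then $\nabla f(x^\ast)=0$ and $L$-smoothness sharpens this to $\tfrac{L}{2}\|\delta x^\ast\|^2\le\tfrac{L\overline{R}^2\delta^2}{2}$, giving the second bound; correspondingly the bias term $\min\{uG\sqrt{d},u^2Ld/2\}$ of Lemma~\ref{lemma:mirror_descent_temp_term1} is retained in its $uG\sqrt{d}$ form in the first case and in its $u^2Ld/2$ form in the second. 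The standing hypothesis $u\le\delta\underline{r}/(3\sqrt{d})$ is exactly what activates Lemmas~\ref{lemma:2point_bias} and \ref{lemma:mirror_descent_temp_term1}.

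The main obstacle is the final constant bookkeeping: collecting the $\tfrac{\eta}{2}\mathbb{E}\|G(t)\|^2$ term, the $\eta(12G^2+\sigma^2/(2u^2))\bar{\mathfrak b}d$ term from Lemma~\ref{lemma:mirror_descent_temp_term2}, and the $2\sqrt{3}\,\eta\bar{\mathfrak b}(G^2+\sigma^2/(24u^2))\sqrt{d}$, the $\eta L\bar{b}\sqrt{nd}\,\sqrt{12G^2+\sigma^2/(2u^2)}\,\overline{R}$, and the exponential terms from Lemma~\ref{lemma:mirror_descent_temp_term1}, each scaled by $1/\kappa(u)\le 200/199$, and repackaging them into the stated form $16\eta[G^2+(L\overline{R}/4)^2](\bar{\mathfrak b}+\tfrac12)(\sqrt{d}+\tfrac16)^2+\tfrac{2\eta\sigma^2}{3u^2}(\bar{\mathfrak b}+\tfrac12)(\sqrt{d}+\tfrac16)^2+\tfrac{5G\overline{R}^2\sqrt{n}}{2u}\exp(\tfrac{d}{2}-\tfrac{\delta^2\underline{r}^2}{4u^2})$. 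This uses: $\bar{b}\sqrt{nd}\le\bar{\mathfrak b}d$ (immediate from the definitions together with $d_i\ge 1$), which removes the stray $\sqrt{n}$ and merges the $\bar{b}$- and $\bar{\mathfrak b}$-weighted contributions; the observation that the constant $\tfrac12$ in $(\bar{\mathfrak b}+\tfrac12)$ is precisely the non-$\bar{\mathfrak b}$ term $\tfrac{\eta}{2}\mathbb{E}\|G(t)\|^2$ combined with the $\bar{\mathfrak b}$ of Lemma~\ref{lemma:mirror_descent_temp_term2}; splitting the mixed factor $\sqrt{12G^2+\sigma^2/(2u^2)}$ and the cross product $G\cdot L\overline{R}$ by Young's inequality so that one part enters the $G^2+(L\overline{R}/4)^2$ bracket and the residual $\sigma^2/u^2$ part enters the $\tfrac{2\eta\sigma^2}{3u^2}$ term; completing the square in $\sqrt{d}$ to form the $(\sqrt{d}+\tfrac16)^2$ factor; and folding $1/\kappa(u)$ together with a comparison of $\mathscr{D}_\psi((1-\delta)x^\ast\,|\,x(B))$ with $\overline{\mathscr{D}}$ into the leading constant $\tfrac54$. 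None of these steps is conceptually deep, but pinning down all the explicit numerical constants is the delicate part.
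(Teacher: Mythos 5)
Your proposal is correct and follows essentially the same route as the paper: telescoping the mirror-descent inequality, bounding the inner-product term via the decomposition \eqref{eq:mirror_descent_temp} and Lemmas~\ref{lemma:mirror_descent_temp_term1}--\ref{lemma:mirror_descent_temp_term2}, bounding $\mathbb{E}\|G(t)\|^2$ by Lemma~\ref{lemma:bound_2moment_grad_est_noisy}, applying Jensen, and then passing from the shrunk comparator back to $x^\ast$ with the Lipschitz (resp.\ smoothness-plus-$\nabla f(x^\ast)=0$) argument. The only deviation is your comparator $(1-\delta)x^\ast$ in place of the paper's $\mathcal{P}_{(1-\delta)\mathcal{X}}[x^\ast]$, which is immaterial since both lie in $(1-\delta)\mathcal{X}$ and satisfy $\|\tilde{x}-x^\ast\|\leq\delta\overline{R}$, and your constant-bookkeeping steps (including $\bar{b}\sqrt{nd}\leq\bar{\mathfrak{b}}d$ via $d_i\geq 1$ and the Young-inequality split producing the $G^2+(L\overline{R}/4)^2$ bracket) match the paper's.
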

\begin{proof}{Proof.}
By the previous lemmas, we see that for $t\geq B$,
$$
\begin{aligned}
& \frac{1}{\kappa(u)\eta}\,\mathbb{E}\!\left[\mathscr{D}_\psi(\tilde{x}|x(t \!+\! 1))
-\mathscr{D}_\psi(\tilde{x}|x(t))\right]
\leq
\frac{\mathbb{E}\!\left[\langle G(t),\tilde{x}-x(t)\rangle\right]}{\kappa(u)}
+\frac{\eta\,\mathbb{E}\!\left[\|G(t)\|^2\right]}{2\kappa(u)} \\
\leq\ &
\mathbb{E}\!\left[f(\tilde{x})-f(x(t))\right]
+
\min\!\left\{uG\sqrt{d},\frac{u^2Ld}{2}\right\}
+
2\sqrt{3}
\eta\bar{\mathfrak{b}}
\left(G^2
+\frac{\sigma^2}{24u^2}\right)\sqrt{d} \\
&
+
\eta L\bar{b}\sqrt{nd}
\sqrt{12G^2+\frac{\sigma^2}{2u^2}}\cdot\overline{R}
+
\frac{2G\overline{R}^2\sqrt{n}}{\kappa(u)u}
\exp\!\left(\frac{d}{2}
-\frac{\delta^2\underline{r}^2}{4u^2}
\right) \\
&
+
\frac{\eta}{\kappa(u)}\left(12G^2+\frac{\sigma^2}{2u^2}\right)
\bar{\mathfrak{b}} d
+
\frac{\eta}{2\kappa(u)}
\left(12G^2+\frac{\sigma^2}{2u^2}\right)d.
\end{aligned}
$$
By taking the telescoping sum and noting that $\kappa(u)\in[199/200,1]$, we get
$$
\begin{aligned}
& \mathbb{E}\!\left[\frac{1}{T\!-\!B\!+\!1}\sum\nolimits_{t=B}^{T}
f(x(t))\right] -
f(\tilde{x}) \\
\leq\ &
\frac{
\mathbb{E}\!\left[\mathscr{D}_\psi(\tilde{x}|x(B))\right]}{\kappa(u)\eta (T\!-\!B\!+\!1)}
+
\min\!\left\{uG\sqrt{d},\frac{u^2Ld}{2}\right\}
+
16
\eta \bigg[G^2\!+\!\Big(\mfrac{L\overline{R}}{4}\Big)^{\!2}\bigg]\!\!\left(\bar{\mathfrak{b}}+\frac{1}{2}\right)
\!\!\left(\!\sqrt{d}\!+\!\frac{1}{6}\right)^{\!2} \\
&
+
\frac{2\eta}{3}\cdot\frac{\sigma^2}{u^2}
\left(\bar{\mathfrak{b}}\!+\!\frac{1}{2}\right)
\left(\!\sqrt{d}\!+\!\frac{1}{6}\right)^{\!2}
+
\frac{2G\overline{R}^2\sqrt{n}}{\kappa(u)u}
\exp\!\left(\frac{d}{2}
-\frac{\delta^2\underline{r}^2}{4u^2}
\right),
\end{aligned}
$$
where we plugged in the following bounds derived by noting $\kappa(u)\in[199/200,1]$ and some inequality manipulation:
$$
\begin{aligned}
& 2\sqrt{3}
\eta\bar{\mathfrak{b}}
\left(G^2
+\frac{\sigma^2}{24u^2}\right)\sqrt{d}
+\frac{\eta}{\kappa(u)}\left(12G^2+\frac{\sigma^2}{2u^2}\right)
\bar{\mathfrak{b}} d
+
\frac{\eta}{2\kappa(u)}
\left(12G^2+\frac{\sigma^2}{2u^2}\right)d \\
\leq\ &
13\eta\left(\bar{\mathfrak{b}}+\frac{1}{2}\right)\left(\sqrt{d}+\frac{1}{6}\right)^2
\left(G^2+\frac{\sigma^2}{24u^2}\right)
\end{aligned}
$$
and
$$
\begin{aligned}
\eta L\bar{b}\sqrt{nd}
\sqrt{12G^2+\frac{\sigma^2}{2u^2}}\cdot\overline{R}
\leq\ &
\eta Ld
\sqrt{\frac{\sum_{i,j}(b_{ij}+\Delta)^2}{nd}}
\cdot\frac{1}{2}
\left[\frac{1}{2L}\left(12G^2+\frac{\sigma^2}{2u^2}\right)+2L\overline{R}^2\right] \\
\leq\ &
\eta 
\left(\!\sqrt{d}\!+\!\frac{1}{6}\right)^{\!2}
\!\left(\bar{\mathfrak{b}}\!+\!\frac{1}{2}\right)
\!\left(
3G^2
+\frac{\sigma^2}{8u^2}
+L^2\overline{R}^2
\right).
\end{aligned}
$$

Now let $\tilde{x}=\mathcal{P}_{(1-\delta)\mathcal{X}}[x^\ast]$. We can see that $\|\tilde{x}-x^\ast\|
\leq \|(1-\delta)x^\ast-x^\ast\|
\leq \delta\overline{R}$, and consequently
$
f(\tilde{x})-f(x^\ast)
\leq G \|\tilde{x}-x^\ast\| \leq G\delta\overline{R}
$; if we further know that $x^\ast\in\operatorname{int}\mathcal{X}$, then $\nabla f(x^\ast)=0$, and consequently
$
f(\tilde{x})-f(x^\ast)
\leq L\|\tilde{x}-x^\ast\|^2/2
\leq L\delta^2\overline{R}^2/2
$. Summarizing the above results and plugging in lower bounds of $\kappa(u)$, we can then get the desired results by further using $\mathbb{E}\!\left[\mathscr{D}_\psi(\tilde{x}|x(B))\right]\leq\overline{\mathscr{D}}$ and noting that $\mathbb{E}\!\left[\frac{1}{T\!-\!B\!+\!1}\sum_{t=B}^{T}
f(x(t))\right]\geq \mathbb{E}\!\left[f(\bar{x}(T))\right]$ by the convexity of $f$.
\hfill\Halmos
\end{proof}

Now Theorems~\ref{theorem:convex_noiseless} and \ref{theorem:convex_noisy} can be derived as corollaries of Theorem~\ref{theorem:convex_main}.
\begin{proof}{Proof of Theorem~\ref{theorem:convex_noiseless}.}
The condition on $u$ implies
$$
u\leq \frac{u}{\sqrt{d}}\cdot\sqrt{d
+\frac{4}{9}
\left[\ln
\mfrac{20G\overline{R}^2\sqrt{n}}{u\epsilon}\right]_+}
\leq\frac{\delta\underline{r}}{3\sqrt{d}},
$$
meaning that the condition of Theorem~\ref{theorem:convex_main} is satisfied.
The condition on $u$ and the assumption that $d\geq 2$ also implies
$$
\frac{5G\overline{R}^2\sqrt{n}}{2u}
\exp\left(\frac{d}{2}\!-\!\frac{\delta^2\underline{r}^2}{4u^2}\right)
=
\frac{e^{-7d/4}\epsilon}{8}
\exp
\left[
\frac{9}{4}
\!\left(\!
d\!+\!\frac{4}{9}
\ln\mfrac{20G\overline{R}^2\sqrt{n}}{u\epsilon}
\!-\!
\left(\mfrac{\delta\underline{r}}{3u}\right)^2\right)
\right]
\leq
\frac{e^{-7/2}\epsilon}{8}.
$$
Then we note that the conditions on $\delta$, $\eta$ and $T$ further guarantee
$$
\begin{aligned}
& \frac{5\overline{\mathscr{D}}}{4\eta (T\!-\!B\!+\!1)}
+uG\sqrt{d}
+
16
\eta \bigg[G^2\!+\!\Big(\mfrac{L\overline{R}}{4}\Big)^{\!2}\bigg]\!\!\left(\bar{\mathfrak{b}}+\mfrac{1}{2}\right)
\!\left(\sqrt{d}+\mfrac{1}{6}\right)^{\!2}
+G\overline{R}\delta \\
\leq\ &
\frac{\epsilon}{6}
+\frac{\epsilon \underline{r}}{15\overline{R}}
+\frac{\epsilon}{2}
+\frac{\epsilon}{5}
\leq \frac{13}{15}\epsilon.
\end{aligned}
$$
Summarizing all these bounds and using Theorem~\ref{alg:alg_zero_order} with $\sigma=0$ shows that $\mathbb{E}[f(\bar{x}(t))]-f(x^\ast)\leq\epsilon$.
\hfill\Halmos
\end{proof}

\begin{proof}{Proof of Theorem~\ref{theorem:convex_noisy}.}
Just as in the proof of Theorem~\ref{theorem:convex_noiseless}, We can similarly show that $u\leq\delta\underline{r}/(3\sqrt{d})$ and that
$
\mfrac{5G\overline{R}^2\sqrt{n}}{2u}
\exp\left(\mfrac{d}{2}-\mfrac{\delta^2\underline{r}^2}{4u^2}\right)
\leq
\mfrac{e^{-7/2}}{8}\epsilon
$.
Moreover, the condition on $u$ implies that, for sufficiently small $\epsilon>0$,
\begin{equation}\label{eq:main_convex_noisy_u_asymptotic}
u\leq \Theta\bigg(
\frac{\delta}{\sqrt{d+\ln(1/\epsilon)}}
\bigg),
\end{equation}
in which equality can be achieved if the condition on $u$ is satisfied with equality. By plugging in the conditions on the parameters, it can be established that
$$
\begin{aligned}
& \frac{5\overline{\mathscr{D}}}{4\eta (T\!-\!B\!+\!1)}
+16
\eta \bigg[G^2\!+\!\Big(\mfrac{L\overline{R}}{4}\Big)^{\!2}\bigg]\!\!\left(\bar{\mathfrak{b}}\!+\!\frac{1}{2}\right)
\!\!\left(\sqrt{d}\!+\!\frac{1}{6}\right)^{\!2}
+
\frac{2\eta\sigma^2}{3u^2}
\left(\bar{\mathfrak{b}}\!+\!\frac{1}{2}\right)
\!\left(\sqrt{d}\!+\!\frac{1}{6}\right)^{\!2} \\
\leq\ &
\frac{\epsilon}{6}
+\frac{12u^2\epsilon}{\sigma^2}\bigg[G^2\!+\!\Big(\mfrac{L\overline{R}}{4}\Big)^{\!2}\bigg]
+\frac{\epsilon}{4}
\leq\frac{\epsilon}{2}
\end{aligned}
$$
for sufficiently small $\epsilon>0$.

Now, if $\delta$ satisfies \eqref{eq:convex_noisy_cond_delta_case1}, then
$
uG\sqrt{d}
+G\overline{R}\delta
\leq \mfrac{\epsilon\underline{r}}{15\overline{R}}+\mfrac{\epsilon}{5}
\leq\mfrac{4\epsilon}{15}
$,
and since $e^{-7d/4}/8\leq 1/15$, by summarizing the above results and using Theorem~\ref{theorem:convex_main}, we get $\mathbb{E}[f(\bar{x}(T))]-f(x^\ast)\leq\epsilon$.

If $x^\ast\in\operatorname{int}\mathcal{X}$ and $\delta$ satisfies \eqref{eq:convex_noisy_cond_delta_case1}, then
$
\mfrac{u^2Ld}{2}
+\mfrac{L\overline{R}^2}{2}\delta^2
\leq \mfrac{\epsilon \underline{r}^2}{36\overline{R}^2}
+\mfrac{\epsilon}{4}
\leq \mfrac{5\epsilon}{18},
$
and since $e^{-7d/4}/8\leq 1/18$, by summarizing the above results and using Theorem~\ref{theorem:convex_main}, we get $\mathbb{E}[f(\bar{x}(T))]-f(x^\ast)\leq\epsilon$.

The asymptotic behavior of $T$ can be derived from \eqref{eq:main_convex_noisy_u_asymptotic} and the conditions on the parameters.\hfill\Halmos
\end{proof}

\subsection{Analysis of the Unconstrained Nonconvex Setting}

Recall that the iterations of Algorithm~\ref{alg:alg_zero_order} in the unconstrained nonconvex setting can be written as
\begin{equation}\label{eq:alg_basic_iter_nonconvex}
x(t+1)
= x(t)-\eta G(t),
\end{equation}
where $G(t)$ is the $d$-dimensional vector that concatenates $G^1(t),\ldots,G^n(t)$.
Since $f$ is $L$-smooth, we have
\begin{equation}\label{eq:nonconvex_basic_ineq}
f(x(t+1))-
f(x(t))
\leq
-\eta\langle \nabla f(x(t)), G(t)\rangle +\mfrac{\eta^2L}{2}\|G(t)\|^2.
\end{equation}
The analysis of the nonconvex case follows a similar outline: 1) bounding the expectation of the right-hand side of~\eqref{eq:nonconvex_basic_ineq}, and then 2) taking the telescoping sum to cancel the terms from the left-hand side of~\eqref{eq:nonconvex_basic_ineq}.

\vspace{3pt}
\noindent {\bf Step 1: Bounding the expectation of the right-hand side of~\eqref{eq:nonconvex_basic_ineq}.}
The expectation of $\eta^2 L\|G(t)\|^2/2$ can be bounded by Lemma~\ref{lemma:bound_2moment_grad_est_noisy}. In order to bound the expectation of $-\eta\langle\nabla f(x(t)),G(t)\rangle$, we notice that
\begin{align*}
& -\langle\nabla f(x(t)), G(t)\rangle \\
=\ &
-\frac{1}{n}
\sum\nolimits_{i,j=1}^n
\langle \nabla^i f\big(x(\tau^i_j(t))\big),
D_j(\tau^i_j(t)) z^i(\tau^i_j(t))\rangle, \\
& -\frac{1}{n}
\sum\nolimits_{i,j=1}^n
\left\langle \nabla^i f(x(t))
-\nabla^i f(x(\tau^i_j(t))),\nabla^i f_j^u\big(x(\tau^i_j(t))\big)
\right\rangle \\
& -\frac{1}{n}
\sum\nolimits_{i,j=1}^n
\langle \nabla^i f(x(t))
-\nabla^i f(x(\tau^i_j(t))),
D_j(\tau^i_j(t)) z^i(\tau^i_j(t))
-\nabla^i f_j^u\big(x(\tau^i_j(t))\big)
\rangle.
\end{align*}
The expectation of the right-hand side of the above equation can be bounded by the following lemmas, whose proofs are given in Appendices~\ref{sec:proof:lemma:nonconvex_inner_product_bound_1} and \ref{sec:proof:lemma:nonconvex_inner_product_bound_2}.

\begin{lemma}\label{lemma:nonconvex_inner_product_bound_1}
For any $t\geq B$, we have
$$
\begin{aligned}
& \mathbb{E}\!\left[
-\frac{1}{n}
\!\sum\nolimits_{i,j=1}^n
\!\left\langle \nabla^i f(x(t))
-\nabla^i f(x(\tau^i_j(t))),
D_j(\tau^i_j(t)) z^i(\tau^i_j(t))
-\nabla^i f_j^u\big(x(\tau^i_j(t))\big)
\right\rangle
\right] \\
\leq\ &
\eta L\bar{b}\sqrt{n}d
\left(12 G^2 + \frac{\sigma^2}{2u^2}\right).
\end{aligned}
$$
\end{lemma}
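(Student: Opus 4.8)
The plan is to bound the inner product by Cauchy--Schwarz followed by Peter--Paul, separating the ``smoothness-of-$f$'' factor from the ``gradient-estimate noise'' factor, and then to control each factor using results already established. First I would write, for each pair $(i,j)$,
$$
\mathbb{E}\!\left[
-\left\langle \nabla^i f(x(t))
-\nabla^i f(x(\tau^i_j(t))),
D_j(\tau^i_j(t)) z^i(\tau^i_j(t))
-\nabla^i f_j^u\big(x(\tau^i_j(t))\big)
\right\rangle
\right]
\leq
\frac{1}{2\alpha_{ij}}\mathbb{E}\!\left[\|\nabla^i f(x(t))-\nabla^i f(x(\tau^i_j(t)))\|^2\right]
+\frac{\alpha_{ij}}{2}\mathbb{E}\!\left[\big\|D_j(\tau^i_j(t))z^i(\tau^i_j(t))-\nabla^i f_j^u(x(\tau^i_j(t)))\big\|^2\right]
$$
for a weight $\alpha_{ij}>0$ to be chosen. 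Since the intended final bound scales like $\eta L\bar b\sqrt n d\,(12G^2+\sigma^2/(2u^2))$ and $\bar b\sqrt n$ comes with a factor $\sqrt{\sum_{i,j}(b_{ij}+\Delta)^2}$, I expect $\alpha_{ij}$ to be proportional to $1/((b_{ij}+\Delta)\sqrt{\text{something}})$, so that $\eta$-dependence enters only through the first term.

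For the first factor, $L$-smoothness of $f$ (Assumption~\ref{assumption:1}, inherited by $f=\frac1n\sum_i f_i$) gives $\|\nabla^i f(x(t))-\nabla^i f(x(\tau^i_j(t)))\|\leq\|\nabla f(x(t))-\nabla f(x(\tau^i_j(t)))\|\leq L\|x(t)-x(\tau^i_j(t))\|$, and then Lemma~\ref{lemma:dist_decision_var} bounds $\mathbb{E}[\|x(t)-x(\tau^i_j(t))\|^2]$ by $\eta^2(b_{ij}+\Delta)^2(12G^2+\sigma^2/(2u^2))d$. For the second factor, I would condition on $\mathcal{F}_{\tau^i_j(t)}$: since $z^i(\tau^i_j(t))$ is sampled after $x^i(\tau^i_j(t))$ is fixed and the additive noises are independent with variance $\le\sigma^2$, the quantity $D_j(\tau^i_j(t))z^i(\tau^i_j(t))$ is essentially the (partial) two-point estimator $\mathsf{G}$ evaluated at $x(\tau^i_j(t))$, whose conditional expectation is (up to the constant $\kappa(u)$ and a negligible exponential term, but here in the unconstrained case it is exactly) $\nabla^i f_j^u(x(\tau^i_j(t)))$; hence this difference has bounded second moment, controlled by Lemma~\ref{lemma:bound_2moment_grad_est_noisy}, which already gives $\mathbb{E}[\|D_j(t)z^i(t)\|^2\mid\mathcal{F}_t]\le(12G^2+\sigma^2/(2u^2))d_i$ — and the centered version is no larger. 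So the second factor is at most $(12G^2+\sigma^2/(2u^2))d_i$ (or a small constant multiple thereof).

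Summing over $i,j$ and choosing $\alpha_{ij}$ to balance, the bound becomes on the order of
$$
\eta L \left(12G^2+\frac{\sigma^2}{2u^2}\right)\cdot\frac{1}{n}\sum_{i,j}(b_{ij}+\Delta)\, d_i
\;\lesssim\;
\eta L\left(12G^2+\frac{\sigma^2}{2u^2}\right)\sqrt{\frac{\sum_{i,j}(b_{ij}+\Delta)^2 d_i}{nd}}\cdot\sqrt{nd}\cdot\sqrt{d}
=\eta L\bar b\sqrt n\, d\left(12G^2+\frac{\sigma^2}{2u^2}\right),
$$
where I apply Cauchy--Schwarz over the index set to convert $\sum_{i,j}(b_{ij}+\Delta)d_i$ into the weighted $\ell_2$ average defining $\bar b$; the exact bookkeeping of dimensional weights $d_i$ versus $d$ is where one must be careful to land on $\bar b\sqrt n d$ rather than $\bar{\mathfrak b}$-type constants. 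The main obstacle is precisely this index manipulation: choosing the $\alpha_{ij}$ and grouping the $\sqrt{d_i}$, $\sqrt{d}$, $\sqrt n$ factors so that the aggregate collapses exactly to $\bar b\sqrt n d$ with the clean constant claimed, while ensuring the ``centered'' second-moment bound does not cost more than a constant. Everything else is a routine application of Lemmas~\ref{lemma:bias_grad_est_nonconvex}, \ref{lemma:bound_2moment_grad_est_noisy} and \ref{lemma:dist_decision_var}.
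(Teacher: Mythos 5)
Your proposal is correct and follows essentially the same route as the paper: Peter--Paul to split the smoothness term from the centered estimator term, Lemma~\ref{lemma:dist_decision_var} with $L$-smoothness for the former, and the conditional-expectation/second-moment argument via Lemmas~\ref{lemma:bias_grad_est_nonconvex} and \ref{lemma:bound_2moment_grad_est_noisy} for the latter. The index bookkeeping you flag as the main obstacle dissolves if you take a single uniform weight $\alpha_{ij}=\eta L\bar{b}\sqrt{n}$, since $\sum_{i,j}(b_{ij}+\Delta)^2=n^2\bar{b}^2$ and $\sum_{i,j}d_i=nd$ then make each half contribute exactly $\tfrac{1}{2}\eta L\bar{b}\sqrt{n}d\,(12G^2+\sigma^2/(2u^2))$, which is what the paper does (your balanced $\alpha_{ij}$ plus Cauchy--Schwarz lands on the same constant).
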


\begin{lemma}\label{lemma:nonconvex_inner_product_bound_2}
For any $t\geq B$, we have
$$
\begin{aligned}
& -\mathbb{E}\!\left[
\frac{1}{n}\sum\nolimits_{i,j=1}^n
\left\langle\nabla^i f(x(\tau^i_j(t))),
D_j(\tau^i_j(t)) z^i(\tau^i_j(t))\right\rangle\right] \\
&\quad
-\mathbb{E}\!\left[
\frac{1}{n}\sum\nolimits_{i,j=1}^n\left\langle \nabla^i f(x(t))
-\nabla^i f(x(\tau^i_j(t))),\nabla^i f_j^u\big(x(\tau^i_j(t))\big)
\right\rangle
\right] \\
\leq\ &
-\frac{5}{6}\mathbb{E}\!\left[\|\nabla f(x(t))\|^2\right]
+
\frac{\eta L\bar{b}\sqrt{nd}}{2\sqrt{3}}
\left(12G^2+\frac{\sigma^2}{2u^2}\right)
+\frac{3}{2}u^2L^2d.
\end{aligned}
$$
\end{lemma}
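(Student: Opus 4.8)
The plan is to remove the randomness of the two-point estimator from the first sum, recombine with the second sum to expose a genuine descent term, and then dominate what remains using the delay bound of Lemma~\ref{lemma:dist_decision_var}.

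First I would use conditional unbiasedness. In the unconstrained setting $\mathcal{X}_i=\mathbb{R}^{d_i}$, so each $z^i(s)\sim\mathcal{N}(0,I_{d_i})$ is independent of $\mathcal{F}_s$ and of the additive noises; writing $D_j(s)\,z^i(s)$ as the $i$-th block of $\mathsf{G}_{f_j}(x(s);u,z(s))$ plus a term of zero conditional mean, Lemma~\ref{lemma:bias_grad_est_nonconvex} gives $\mathbb{E}[D_j(s)\,z^i(s)\mid\mathcal{F}_s]=\nabla^i f_j^u(x(s))$. Since all the timestamps belong to $\mathcal{F}_t$, I would slice on the value of the random delay time $\tau^i_j(t)$ and apply the tower property to replace $D_j(\tau^i_j(t))\,z^i(\tau^i_j(t))$ by $\nabla^i f_j^u(x(\tau^i_j(t)))$ inside the first sum (in expectation). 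Adding the second sum, the contributions $\nabla^i f(x(\tau^i_j(t)))$ cancel, leaving $-\frac{1}{n}\sum_{i,j}\mathbb{E}\big[\langle\nabla^i f(x(t)),\nabla^i f_j^u(x(\tau^i_j(t)))\rangle\big]$; adding and subtracting $\nabla^i f_j^u(x(t))$ and using $\frac1n\sum_j\nabla^i f_j^u=\nabla^i f^u$, this splits into the descent term $-\mathbb{E}[\langle\nabla f(x(t)),\nabla f^u(x(t))\rangle]$ and a delay remainder $-\frac{1}{n}\sum_{i,j}\mathbb{E}\big[\langle\nabla^i f(x(t)),\nabla^i f_j^u(x(\tau^i_j(t)))-\nabla^i f_j^u(x(t))\rangle\big]$.

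For the descent term, $f$ is $L$-smooth, so Lemma~\ref{lemma:bias_grad_est_nonconvex} gives $\|\nabla f(x)-\nabla f^u(x)\|\le uL\sqrt d$; hence $-\langle\nabla f(x(t)),\nabla f^u(x(t))\rangle\le-\|\nabla f(x(t))\|^2+uL\sqrt d\,\|\nabla f(x(t))\|$, and the Peter--Paul inequality $uL\sqrt d\,\|g\|\le\frac16\|g\|^2+\frac32u^2L^2d$ bounds this by $-\frac56\|\nabla f(x(t))\|^2+\frac32u^2L^2d$ --- exactly the first and last terms of the claimed estimate. For the delay remainder, for each fixed $j$ I would apply Cauchy--Schwarz over the block index $i$, using $\sum_i\|\nabla^i f(x(t))\|^2=\|\nabla f(x(t))\|^2\le G^2$ (since $f$ is $G$-Lipschitz) and the $L$-smoothness of $f_j^u$, to obtain a bound of the form $\frac{GL}{n}\,\sqrt{\sum_i\|x(\tau^i_j(t))-x(t)\|^2}$; taking expectations with Jensen, substituting Lemma~\ref{lemma:dist_decision_var}, summing over $j$ with one more Cauchy--Schwarz, and using $\sum_{i,j}(b_{ij}+\Delta)^2=n^2\bar b^2$ yields $GL\eta\,\bar b\sqrt{nd}\,\sqrt{12G^2+\sigma^2/(2u^2)}$. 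Finally, since $12G^2\le 12G^2+\sigma^2/(2u^2)$ gives $G\sqrt{12G^2+\sigma^2/(2u^2)}\le\frac{1}{2\sqrt3}\big(12G^2+\sigma^2/(2u^2)\big)$, this is at most $\frac{\eta L\bar b\sqrt{nd}}{2\sqrt3}\big(12G^2+\frac{\sigma^2}{2u^2}\big)$; combining the two pieces gives the lemma.

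The step I expect to be the main obstacle is the delay remainder. A naive estimate that bounds $\|\nabla^i f(x(t))\|$ by $G$ and then applies a single Cauchy--Schwarz over all pairs $(i,j)$ loses a factor $\sqrt n$; one must instead apply Cauchy--Schwarz first over $i$ for each fixed $j$ and only afterwards over $j$, and keep the factor $G$ outside the square root so that it merges with $\sqrt{\mathbb{E}[\|x(\tau^i_j(t))-x(t)\|^2]}$ into $12G^2+\sigma^2/(2u^2)$ rather than its square root --- reproducing the precise $\bar b\sqrt{nd}$ scaling in the statement depends on this bookkeeping. A secondary technical point is the measurability step: since $\tau^i_j(t)$ is a random, delay-dependent time, one has to justify conditioning at that time, which I would do by slicing over its possible values as indicated above.
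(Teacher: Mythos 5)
Your proposal is correct and follows essentially the same route as the paper's proof: the same slicing/tower argument to replace $D_j(\tau^i_j(t))z^i(\tau^i_j(t))$ by $\nabla^i f_j^u(x(\tau^i_j(t)))$ via Lemma~\ref{lemma:bias_grad_est_nonconvex}, the same recombination into a descent term plus a delay remainder, and the same use of Lemma~\ref{lemma:dist_decision_var}. The only (immaterial) difference is in the delay remainder, where you use Cauchy--Schwarz over $i$ then $j$ together with $G\sqrt{S}\leq S/(2\sqrt{3})$ for $S=12G^2+\sigma^2/(2u^2)$, whereas the paper applies the Peter--Paul inequality with parameter $2\sqrt{3}\eta L\bar{b}\sqrt{nd}$; both yield the stated constant.
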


\vspace{3pt}
\noindent {\bf Step 2: Taking the telescoping sum.} By taking the telescoping sum of~\eqref{eq:nonconvex_basic_ineq} and summarizing the previous lemmas, we can show the following theorem for the unconstrained nonconvex setting.
\begin{theorem}\label{theorem:nonconvex_main}
Let $T\geq B$, and let $f^\ast=\inf_{x\in\mathbb{R}^d} f(x)$. Then
$$
\begin{aligned}
\frac{\sum_{t=B}^{T}
\mathbb{E}\!\left[\|\nabla f(x(t))\|^2\right]}{T\!-\!B\!+\!1}
\leq\, &
\frac{6\left(
f(x(0))-f^\ast
\right)}{5\eta (T\!-\!B\!+\!1)}
+\frac{12}{5}\eta L\bar{b}\sqrt{n}d\left(12G^2 + \frac{\sigma^2}{2u^2}\right)
+2 u^2L^2d \\
&+\frac{GB}{T\!-\!B\!+\!1}
\sqrt{\left(12G^2 + \frac{\sigma^2}{2u^2}\right)d}.
\end{aligned}
$$
\end{theorem}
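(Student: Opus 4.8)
The plan is to start from the $L$-smoothness inequality \eqref{eq:nonconvex_basic_ineq}, take expectations, and sum over $t=0,\dots,T$, treating the warm-up indices $0\le t<B$ and the steady-state indices $t\ge B$ separately. Write $M\coloneqq 12G^2+\sigma^2/(2u^2)$. For $t\ge B$ the delay bound gives $\tau^i_j(t)\ge t-b_{ij}-\Delta\ge t-B\ge 0$, so every array entry has already been refreshed at least once and Lemmas~\ref{lemma:nonconvex_inner_product_bound_1}, \ref{lemma:nonconvex_inner_product_bound_2} apply. Feeding the three-term decomposition of $-\langle\nabla f(x(t)),G(t)\rangle$ displayed just above into those two lemmas, and bounding $\tfrac{\eta^2L}{2}\mathbb{E}[\|G(t)\|^2]\le\tfrac{\eta^2L}{2}Md$ via Lemma~\ref{lemma:bound_2moment_grad_est_noisy}, one gets for each $t\ge B$
$$
\mathbb{E}\big[f(x(t{+}1))-f(x(t))\big]\ \le\ -\tfrac{5\eta}{6}\,\mathbb{E}\big[\|\nabla f(x(t))\|^2\big]\ +\ 2\eta^2 L\bar b\sqrt n\,dM\ +\ \tfrac{3\eta}{2}u^2L^2d ,
$$
where the three $\bar b$-dependent outputs (from the two lemmas and the second-moment bound) have been merged using $\sqrt{nd}\le\sqrt n\,d$ and $1\le\bar b\sqrt n$.

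For the warm-up indices $0\le t<B$, the estimator $G(t)$ still carries stale or uninitialized (hence zero) entries and no descent is available; there I would simply use $\mathbb{E}[-\eta\langle\nabla f(x(t)),G(t)\rangle]\le\eta\,\mathbb{E}[\|\nabla f(x(t))\|\,\|G(t)\|]\le\eta G\sqrt{Md}$ (by $G$-Lipschitzness, Cauchy--Schwarz and Lemma~\ref{lemma:bound_2moment_grad_est_noisy}) together with $\tfrac{\eta^2L}{2}\mathbb{E}[\|G(t)\|^2]\le\tfrac{\eta^2L}{2}Md$; equivalently, since $\psi_i=\tfrac12\|\cdot\|^2$ here and $\|x(t{+}1)-x(t)\|=\eta\|G(t)\|$, one bounds $\mathbb{E}[f(x(B))-f(x(0))]\le G\,\mathbb{E}\|x(B)-x(0)\|\le \eta BG\sqrt{Md}$ directly.

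Summing \eqref{eq:nonconvex_basic_ineq} over $t=0,\dots,T$, the left-hand side telescopes to $\mathbb{E}[f(x(T{+}1))]-f(x(0))\ge f^\ast-f(x(0))$; substituting the two regimes and rearranging isolates $\tfrac{5\eta}{6}\sum_{t=B}^T\mathbb{E}[\|\nabla f(x(t))\|^2]\le f(x(0))-f^\ast+\eta BG\sqrt{Md}+(T-B+1)\big(2\eta^2L\bar b\sqrt n dM+\tfrac{3\eta}{2}u^2L^2d\big)$, plus a lower-order $O(B\eta^2 LMd)$ remainder from the warm-up. Dividing through by $\tfrac{5\eta}{6}(T-B+1)$ and absorbing constants produces exactly the four terms in the statement: $\tfrac{6(f(x(0))-f^\ast)}{5\eta(T-B+1)}$, $\tfrac{12}{5}\eta L\bar b\sqrt n dM$, $\le 2u^2L^2d$, and $\le\tfrac{GB}{T-B+1}\sqrt{Md}$.

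There is no real conceptual obstacle in this theorem on its own: it is telescoping plus elementary inequalities once Lemmas~\ref{lemma:nonconvex_inner_product_bound_1}--\ref{lemma:nonconvex_inner_product_bound_2} are granted, and those lemmas carry the genuine work. The only delicate point is the constant bookkeeping — making the outputs of the two inner-product lemmas, the second-moment lemma and the crude warm-up estimate collapse into precisely the coefficients $5/6$, $12/5$ and $2$ requires the inequalities $\sqrt{nd}\le\sqrt n\,d$, $1\le\bar b\sqrt n$ (for $n\ge 2$) and $\bar b\le B$, together with a check that the $O(B\eta^2)$ warm-up remainder is dominated by the retained terms.
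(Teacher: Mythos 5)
Your proposal is correct and follows essentially the same route as the paper: combine Lemmas~\ref{lemma:nonconvex_inner_product_bound_1} and \ref{lemma:nonconvex_inner_product_bound_2} with the second-moment bound of Lemma~\ref{lemma:bound_2moment_grad_est_noisy} to get the per-step descent inequality for $t\geq B$, telescope, and control the warm-up via $\mathbb{E}[f(x(B))]\leq f(x(0))+\eta GB\sqrt{(12G^2+\sigma^2/(2u^2))d}$. The paper telescopes only over $t=B,\dots,T$ and uses exactly the direct Lipschitz bound on $f(x(B))-f(x(0))$ that you mention as the ``equivalent'' alternative, which avoids the extra $O(B\eta^2 L M d)$ remainder your sum-from-zero variant would otherwise need to absorb.
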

\begin{proof}{Proof.}
By Lemmas~\ref{lemma:nonconvex_inner_product_bound_1} and \ref{lemma:nonconvex_inner_product_bound_2}, we get
$$
\mathbb{E}\!\left[-\langle\nabla\! f(x(t)),G(t)\rangle\right]
\!\leq\!
-\frac{5}{6}\mathbb{E}\!\left[\|\nabla \!f(x(t))\|^2\right]
+
\eta L\bar{b}\sqrt{n}\!\left(\!12G^2 \!+\! \frac{\sigma^2}{2u^2}\!\right)\!\!
\left(\!d\!+\!\mfrac{\sqrt{d}}{2\sqrt{3}}\!\right)
+\frac{3u^2L^2d}{2},
$$
and together with the bound in Lemma~\ref{lemma:bound_2moment_grad_est_noisy}, we have
$$
\begin{aligned}
\mathbb{E}\!\left[
f(x(t \!+\!1)) \!-\!f(x(t))
\right]
\leq\,&
\!-\!\frac{5\eta}{6}\mathbb{E}\!\left[\|\nabla f(x(t))\|^2\right]
+
\eta^2 L\bar{b}\sqrt{n}\!\left(\!12G^2 \!+\! \frac{\sigma^2}{2u^2}\!\right)\!
\left(d\!+\!\frac{\sqrt{d}}{2\sqrt{3}}\right) \\
&
+\frac{3}{2}\eta u^2L^2d
+\frac{\eta^2 Ld}{2}\left(12G^2 + \frac{\sigma^2}{2u^2}\right) \\
\leq\,&
\!-\!\frac{5\eta}{6}\mathbb{E}\!\left[\|\nabla f(x(t))\|^2\right]
+
2\eta^2 L\bar{b}\sqrt{n}d\!\left(\!12G^2 \!+\! \frac{\sigma^2}{2u^2}\!\right)\!
+\frac{3}{2}\eta u^2L^2d.
\end{aligned}
$$
By taking the telescoping sum, we get
$$
\begin{aligned}
\frac{\sum_{t=B}^{T}
\mathbb{E}\!\left[\|\nabla f(x(t))\|^2\right] }{T\!-\!B\!+\!1}
\leq
\frac{6\,\mathbb{E}\!\left[
f(x(B))\!-\!f^\ast
\right]}{5\eta(T\!-\!B\!+\!1)}
+\frac{12}{5}\eta L\bar{b}\sqrt{n}d
\!\left(12G^2 \!+\! \frac{\sigma^2}{2u^2}\right)
+2 u^2L^2d.
\end{aligned}
$$
For $\mathbb{E}\!\left[f(x(B))\right]$, by the $G$-Lipschitz continuity of $f$, we have
$$
\mathbb{E}\!\left[f(x(B))\right]
\leq
f(x(0))
+\eta G\sum_{t=0}^{B-1}\mathbb{E}
\!\left[
\|G(t)\|
\right]
\leq
f(x(0)) + \eta G
B\sqrt{\left(12G^2
+\frac{\sigma^2}{2u^2}\right)d},
$$
and plugging it into the bound on $\mfrac{1}{T\!-\!B\!+\!1}\sum_{t=B}^{T}
\mathbb{E}\!\left[\|\nabla f(x(t))\|^2\right]$ completes the proof.
\hfill\Halmos
\end{proof}

Now Theorems~\ref{theorem:nonconvex_noiseless} and \ref{theorem:nonconvex_noisy} in the main text can be derived as corollaries of Theorem~\ref{theorem:nonconvex_main} by plugging in the conditions on the parameters and noticing that terms of the order $O(\epsilon^2)$ will be less than $\alpha\epsilon$ for sufficiently small $\epsilon>0$, where $\alpha$ is any arbitrary positive number.

\section{Impacts of Knowledge of Local Function Dependence}

In the previous sections we assume that each local cost function $f_j$ may be affected by any other agent's action, i.e., $\nabla^i f_j$ can be nonzero for any $i$. However, in some situations, $f_j$ may only depend on the actions of a subset of agents, and the agents may have knowledge of this dependence. For example, in the distributed routing control problem, each route may only be shared by a subset of the agents, and each agent may be aware of the other agents who will share the same routes; in the wind power maximization problem, one can usually adopt the approximation that the power generated by a turbine will not be affected by the actions of the turbines downstream. In this section, we briefly discuss what benefits it will bring when the agents have additional knowledge of such local function dependence information.

Let $\mathcal{A}_i$ be the set of agents whose local costs will be affected by agent $i$'s action (i.e., $\nabla^i f_j$ is not always zero for each $j\in\mathcal{A}_i$). Then, if each agent $i$ knows its associated set $\mathcal{A}_i$, due to the fact that
$
\nabla^i f(x) = \frac{1}{n} \sum_{j \in \mathcal{A}_i} \nabla^i f_j(x)
$, the partial gradient estimator \eqref{eq:partial_grad_est_final} can be further simplified as
\begin{equation}\label{eq:partial_grad_est_final_known_depn}
G^i(t) = \dfrac{1}{n}\sum_{j\in\mathcal{A}_i}  D_j^i(t) \,z^i(\tau_j^i(t)).
\end{equation}
In this case, the following lemma shows that the second-moment of the gradient estimator will be reduced:
\begin{lemma}\label{lemma:function_depn_reduced_moment}
We have
$$
\mathbb{E}\!\left[
\left\|\dfrac{1}{n}\sum\nolimits_{j\in\mathcal{A}_i}  D_j^i(t) \,z^i(\tau_j^i(t))\right\|^2
\right]
\leq
\left(12G^2+\frac{\sigma^2}{2u^2}\right)
\frac{|\mathcal{A}_i|^2}{n^2}d_i.
$$
\end{lemma}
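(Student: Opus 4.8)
The plan is to mimic the proof of Lemma~\ref{lemma:bound_2moment_grad_est_noisy} but track the cardinality $|\mathcal{A}_i|$ instead of $n$. Writing $W_j(t) \coloneqq D_j^i(t)\, z^i(\tau_j^i(t))$, the quantity to bound is $\mathbb{E}[\| \frac{1}{n}\sum_{j\in\mathcal{A}_i} W_j(t)\|^2]$. First I would apply the Cauchy--Schwarz (or power-mean) inequality in the form $\|\sum_{j\in\mathcal{A}_i} W_j(t)\|^2 \leq |\mathcal{A}_i| \sum_{j\in\mathcal{A}_i}\|W_j(t)\|^2$, so that
$$
\mathbb{E}\!\left[\left\|\tfrac{1}{n}\sum\nolimits_{j\in\mathcal{A}_i} W_j(t)\right\|^2\right]
\leq \frac{|\mathcal{A}_i|}{n^2}\sum\nolimits_{j\in\mathcal{A}_i} \mathbb{E}\!\left[\|W_j(t)\|^2\right].
$$
Then, conditioning on $\mathcal{F}_t$ and using the same per-term bound already established inside the proof of Lemma~\ref{lemma:bound_2moment_grad_est_noisy} — namely $\mathbb{E}[\|D_j(\tau)\,z^i(\tau)\|^2 \mid \mathcal{F}_\tau] \leq (12G^2 + \sigma^2/(2u^2))\, d_i$ for each relevant time index, which holds because $D_j$ involves the $G$-Lipschitz cost and independent noise of variance $\le\sigma^2$, and $z^i$ is a projected Gaussian in $\mathbb{R}^{d_i}$ whose second moment is dominated by that of $\mathcal{N}(0,I_{d_i})$ — each summand is at most $(12G^2 + \sigma^2/(2u^2))\,d_i$. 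Summing over the $|\mathcal{A}_i|$ indices and dividing yields $\frac{|\mathcal{A}_i|}{n^2}\cdot|\mathcal{A}_i|\cdot(12G^2 + \sigma^2/(2u^2))\,d_i = (12G^2 + \sigma^2/(2u^2))\,\frac{|\mathcal{A}_i|^2}{n^2}\,d_i$, which is exactly the claim.

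The one point requiring a little care is that the delayed perturbation directions $z^i(\tau_j^i(t))$ appearing in $W_j(t)$ for different $j$ need not be independent (indeed they may coincide), so I cannot hope for a variance-style cancellation that would give $|\mathcal{A}_i|$ rather than $|\mathcal{A}_i|^2$; the crude Cauchy--Schwarz bound above is what the statement asks for and is the natural analogue of the $n^2$-versus-$n$ situation in Lemma~\ref{lemma:bound_2moment_grad_est_noisy}. The second mild subtlety is that $\tau_j^i(t)$ is $\mathcal{F}_t$-measurable (it is part of the $\sigma$-algebra by construction), so the per-term conditional bound can be invoked at the random time $\tau_j^i(t)$ by first conditioning on the timestamps and then taking total expectation, exactly as done in Lemmas~\ref{lemma:bound_2moment_grad_est_noisy} and \ref{lemma:dist_decision_var}. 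Neither of these is a real obstacle — the whole argument is a one-line modification of an already-proved lemma — so I expect the "hard part" to be purely cosmetic: making sure the reader sees that the only change from the $j\in\{1,\dots,n\}$ case is replacing one factor of $n$ in the Cauchy--Schwarz step by $|\mathcal{A}_i|$, with everything downstream of $\|W_j(t)\|^2$ untouched.
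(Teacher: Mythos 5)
Your proposal is correct and follows exactly the paper's argument: Cauchy--Schwarz to pull out a factor of $|\mathcal{A}_i|$, then the per-term second-moment bound $\bigl(12G^2+\sigma^2/(2u^2)\bigr)d_i$ from Lemma~\ref{lemma:bound_2moment_grad_est_noisy} applied to each of the $|\mathcal{A}_i|$ summands. The two subtleties you flag (no independence across $j$, and handling the random timestamps by conditioning) are exactly the right ones and are resolved the same way in the paper.
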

\begin{proof}{Proof.}
By Lemma~\ref{lemma:bound_2moment_grad_est_noisy}, we have
$$
\begin{aligned}
\mathbb{E}\!\left[
\left\|\dfrac{1}{n}\sum\nolimits_{j\in\mathcal{A}_i}  D_j^i(t) \,z^i(\tau_j^i(t))\right\|^2
\right]
\leq\ &
\frac{|\mathcal{A}_i|}{n^2}
\sum_{j\in\mathcal{A}_i}
\mathbb{E}\!\left[
\left\|\sum\nolimits_{j\in\mathcal{A}_i}  D_j^j(\tau^i_j(t)) \,z^i(\tau_j^i(t))\right\|^2
\right] \\
\leq\ &
\frac{|\mathcal{A}_i|}{n^2}
\sum_{j\in\mathcal{A}_i}
\left(12G^2+\frac{\sigma^2}{2u^2}\right)d_i
=
\frac{|\mathcal{A}_i|^2}{n^2}
\left(12G^2+\frac{\sigma^2}{2u^2}\right)d_i,
\end{aligned}
$$
which completes the proof.
\hfill\Halmos
\end{proof}
Compared to the result on $\mathbb{E}\!\left[\|G^i(t)\|^2\right]$ in Lemma~\ref{lemma:bound_2moment_grad_est_noisy}, we see that the second-order moment of $G^i(t)$ is reduced by a factor of $|\mathcal{A}_i|^2/n^2$. Consequently, the complexity of Algorithm~\ref{alg:alg_zero_order} can be further improved with better dependence on the network topology and the number of agents. We omit detailed analysis here but provide brief numerical comparison in Section~\ref{sec:simulation}.

Another benefit brought by the knowledge of $\mathcal{A}_i$ is communication savings. Originally, in Algorithm~\ref{alg:alg_zero_order}, each agent needs to send the whole array~\eqref{eq:table_Dij_tauij} to its neighbors. On the other hand, the following theorem shows that, the communication burden can be relieved if $\mathcal{A}_i$ is known to each agent $i$ and the communication network has a structure compatible with the sets $\mathcal{A}_i$.

\begin{theorem}\label{theorem:communication_savings}
\label{theorem:only keep track of what you affect}
Suppose for any $i,j,l$ such that $j \in \mathcal{A}_l\backslash\mathcal{A}_i$ (i.e., $f_j$ depends on $x^l$ but not $x^i$), the following conditions hold:
\begin{enumerate}
\item There exists a path $P_{lj}$ in $\mathcal{G}$ connecting $l$ and $j$ which does not contain $i$.
\item For any agent $r$ on the path $P_{lj}$, $f_j$ depends on $x_r$.
\end{enumerate}
Further, suppose no communication failures occur at any link. Then, in order for each agent to be able to construct the partial gradient estimator~\eqref{eq:partial_grad_est_final_known_depn} with $t-\tau^i_j(t)$ being bounded, each agent $i$ only needs to record, update and pass $(D^i_j(t),\tau^i_j(t))$ for $j \in \mathcal{A}_i$.
\end{theorem}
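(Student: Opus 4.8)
\medskip
\noindent\textbf{Proof proposal.} The plan is to track, for each index $j$, which agents are ever allowed to hold and relay the $j$-th column of the array~\eqref{eq:table_Dij_tauij} under the reduced scheme, to show that those agents induce a connected subgraph of $\mathcal{G}$, and then to repeat on that subgraph the elementary delay bookkeeping already carried out in Section~\ref{subsec:maintain_table} for the full algorithm.

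First I would make the reduced protocol precise: each agent $i$ deletes every column $j\notin\mathcal{A}_i$ from its array, so it records, updates and transmits $(D^i_j(t),\tau^i_j(t))$ only for $j\in\mathcal{A}_i$, and on reception it only inspects the columns it keeps. The immediate consequence is that a datum pertaining to $f_j$ can be held or relayed by an agent $r$ only if $j\in\mathcal{A}_r$. Writing $N_j\coloneqq\{\,r:j\in\mathcal{A}_r\,\}$ for the set of agents on whose action $f_j$ depends (we may take $j\in N_j$, since agent $j$ regenerates $D^j_j(t)$ from scratch at every step, per Line~6 of Algorithm~\ref{alg:alg_zero_order}), all information about $f_j$ therefore lives inside the induced subgraph $\mathcal{G}[N_j]$, and the claim reduces to showing that, for every $j$, the data $D^j_j(\cdot)$ reaches every agent of $N_j$ along $\mathcal{G}[N_j]$ with bounded delay.

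The crux is that $\mathcal{G}[N_j]$ is connected. If $N_j$ is the whole agent set this is just connectivity of $\mathcal{G}$; otherwise fix any $i_0\notin N_j$, which is the same as $j\notin\mathcal{A}_{i_0}$. Then every $l\in N_j$ satisfies $j\in\mathcal{A}_l\setminus\mathcal{A}_{i_0}$, so conditions~1--2 of the statement furnish a path from $l$ to $j$ in $\mathcal{G}$ every vertex $r$ of which has $f_j$ depending on $x^r$ --- i.e.\ a path lying entirely in $N_j$. Hence every $l\in N_j$ is joined to $j$ inside $\mathcal{G}[N_j]$, so $\mathcal{G}[N_j]$ is connected with diameter at most $n-1$. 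Given this, I would finish by induction on $d_{\mathcal{G}[N_j]}(i,j)$ exactly as in Section~\ref{subsec:maintain_table}: with no communication failures each round advances the freshest $j$-timestamp one hop within $\mathcal{G}[N_j]$, and since every agent updates $\tau^i_j(\cdot)$ by the running maximum of the timestamps it has seen, one gets $\tau^i_j(t)\ge t-d_{\mathcal{G}[N_j]}(i,j)$ for $t\ge d_{\mathcal{G}[N_j]}(i,j)$, hence $0\le t-\tau^i_j(t)\le\operatorname{diam}\mathcal{G}[N_j]\le n-1$ for every $i\in N_j$, i.e.\ for every $j\in\mathcal{A}_i$. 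This is precisely the bounded-delay guarantee needed to form~\eqref{eq:partial_grad_est_final_known_depn}.

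The hard part, such as it is, is this connectivity step: recognizing that conditions~1--2 are exactly what forbids the failure mode in which $D^j_j$ would have to be relayed by an agent $r$ with $j\notin\mathcal{A}_r$, who under the reduced scheme silently discards it. The rest is a definitional unfolding of the reduced protocol and a near-verbatim reuse of the delay analysis already in the paper; the only items to watch are the boundary case $N_j=\{1,\dots,n\}$, the convention $j\in N_j$, and the observation that the sharper delay bound $\operatorname{diam}\mathcal{G}[N_j]$ (in place of $b_{ij}+\Delta$) is what should be propagated into the complexity theorems.
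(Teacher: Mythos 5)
Your proof is correct and follows essentially the same route as the paper's: both hinge on the observation that conditions 1--2 supply, for every agent $l$ needing $f_j$'s data, a relay path to $j$ consisting entirely of agents $r$ with $j\in\mathcal{A}_r$, who therefore keep and forward column $j$ under the reduced protocol. Your packaging of this as connectivity of the induced subgraph $\mathcal{G}[N_j]$, together with the explicit delay bound $t-\tau^i_j(t)\le\operatorname{diam}\mathcal{G}[N_j]$, is simply a more detailed rendering of the paper's terser per-path argument.
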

\begin{proof}{Proof.}
It suffices to show that for each $i=1,\ldots,n$, agent $i$ does not need to \emph{pass} information about the difference quotient of $f_j$ for any $j \notin \mathcal{A}_i$ for the sake of other agents' updates.

Let $i\in\{1,\ldots,n\}$ and $j\notin\mathcal{A}_i$ be arbitrary, and let $l$ be an arbitrary agent such that $j\in\mathcal{A}_l$. By the first condition stated in the theorem, we know that there exists a path $P_{lj}$ not containing $i$. Moreover, by the second condition, for any agent $r$ on the path $P_{lj}$, $f_j$ is a function of $x_r$, so agent $r$ receives and passes on information about $f_j$. This then implies that agent $l$ can successfully receive the information it needs from $f_j$ via the path $P_{lj}$, and further that $t-t^l_j(t)$ is upper bounded by the length of $P_{lj}$. Hence, agent $i$ does not need to pass on information about $f_j$ for agent $l$, and by the arbitrariness of $i$, $j$ and $l$, we get the desired conclusion.
\hfill\Halmos
\end{proof}

Theorem~\ref{theorem:communication_savings} shows that, when the communication graph is ``compatible'' with the local function dependence (in the sense stated in the conditions of the theorem), the number of columns of the array~\eqref{eq:table_Dij_tauij} can then be reduced from $n$ to $|\mathcal{A}_i|$ for each agent $i$, which also leads to reduced communication burden. We mention that Theorem~\ref{theorem:communication_savings} analyzes only one possibility of ``compatibility'' between the communication network and the local function dependence, and one can propose other compatibility conditions for the communication network so that the size of the array~\eqref{eq:table_Dij_tauij} and/or the communication burden can be reduced. Investigating other notions of compatibility between the communication network and the local function dependence for the ZFO algorithm will be an interesting direction which we leave as future work.

\section{Numerical Examples}\label{sec:simulation}

In this section, we demonstrate the performance of our ZFO algorithm on the distributed routing control problem introduced in Section~\ref{subsec:distributed_routing}. It is not hard to see that the global objective function is given by
$$
f(v)
=\frac{1}{n}\sum_{r=1}^m
q_r(v)\cdot c_r(q_r(v)),
\qquad q_r(v)=\sum_{j:r\in\mathcal{R}_j} v^j_r Q_j.
$$
Therefore $f$ is a convex function of $v$.

Before presenting the detailed simulation setups, we first note that in the distributed routing control problem, each action vector $v^i$ has to lie in the probability simplex, which is a compact convex set but with an empty interior. In addition, recall that the algorithm requires shrinking the feasible set in the mirror descent step, so that sufficient space will be reserved for the sampling of each $z^i(t)$. We therefore reformulate the decision variables and feasible sets as
$$
\tilde{v}^i=\big(v^i_1,\ldots,v^i_{|\mathcal{R}_i|-1}\big),
\qquad
\tilde{\mathcal{X}}_i
=\left\{
\big(v^i_1,\ldots,v^i_{|\mathcal{R}_i|-1}\big):
v^i_r\geq 0,
\sum_{r=1}^{\mathcal{R}_i-1} v^i_r\leq 1\right\}.
$$
In other words, we eliminate one entry from $v^i$, so that the new feasible set $\tilde{\mathcal{X}}_i$ will have a nonempty interior; $v^i_{|\mathcal{R}_i|}$ can be recovered by $1-\sum_{r=1}^{|\mathcal{R}_i|-1}v^i_r$. The feasible set after shrinkage will be given by
$$
\begin{aligned}
\tilde{\mathcal{X}}_i^\delta
=\ &
(1-\delta)\big(\tilde{\mathcal{X}}_i-|\mathcal{R}_i|^{-1}\mathbf{1}\big)+|\mathcal{R}_i|^{-1}\mathbf{1} \\
=\ &
\left\{
\big(v^i_1,\ldots,v^i_{|\mathcal{R}_i|-1}\big)
\in\mathbb{R}^{|\mathcal{R}_i|-1}:
v^i_r\geq \frac{\delta}{|\mathcal{R}_i|},\ \ 
\sum_{r=1}^{|\mathcal{R}_i|-1} v^i_r\leq 1-\frac{\delta}{|\mathcal{R}_i|}\right\},
\end{aligned}
$$
in which we perform a translation of $\tilde{\mathcal{X}}_i$ so that the interior of $\tilde{\mathcal{X}}_i-|\mathcal{R}_i|^{-1}\mathbf{1}$ contains the origin. We let the Bregman divergence to be $\mathscr{D}_{\psi_i}(x^i|y^i)=\frac{1}{2}\|x^i-y^i\|^2$ for $x^i,y^i\in\mathbb{R}^{|\mathcal{R}_i|-1}$ for each $i$, and the resulting mirror descent step is then the projection of $\tilde{v}^i(t)-\eta G^i(t)$ onto $\tilde{\mathcal{X}}_i^\delta$.

We now introduce the detailed setup of the numerical test case. The test case consists of $60$ agents and $22$ routes. We partition the agents into $10$ groups indexed from $1$ to $10$, each group having $6$ agents. We let Route $1$ and Route $2$ be shared by the agents in Group $1$, let Route $21$ and Route $22$ be shared within Group $10$, and let Routes $2i+1$ and $2i+2$ be shared across Group $i$ and Group $i+1$. We can see that each agent is able to use $4$ routes. The total traffic that each agent has to send, $Q_i$, is picked randomly by taking the absolute value of a Gaussian random variable with distribution $\mathcal{N}(1,0.2)$. We let the congestion function to be $c_r(x) = a_r x^2 + b_r x + c_r$ for each route $r$, where $a_r, b_r, c_r$ are also randomly selected via $a_r = |\tilde{a}_r|, b_r = |\tilde{b}_r|, c_r = |\tilde{c}_r|$ with $\tilde{a}_r, \tilde{b}_r, \tilde{c}_r$ independently sampled from $\mathcal{N}(0,0.8)$. The bidirectional communication network is randomly generated so that each agent has $2$ to $4$ neighbors, and we assume no additional delays occur so that $\Delta=0$. In this paper, we will only present the simulation results for one particular instance, for which we have
$$
\bar{b} = \bar{\mathfrak{b}} = 6.03757,
\qquad
B = 15,
$$
and the optimal value $f^\ast=4.18852$. For our ZFO algorithm, we set the initial point to be $v^i_r=1/4$, i.e., each agent initially distributes their traffic evenly among the routes it is able to use.

\begin{figure}[t]
\subfloat[][Noiseless.]{
\label{fig:depn_unknown_noiseless}%
\includegraphics[width=.45\textwidth]{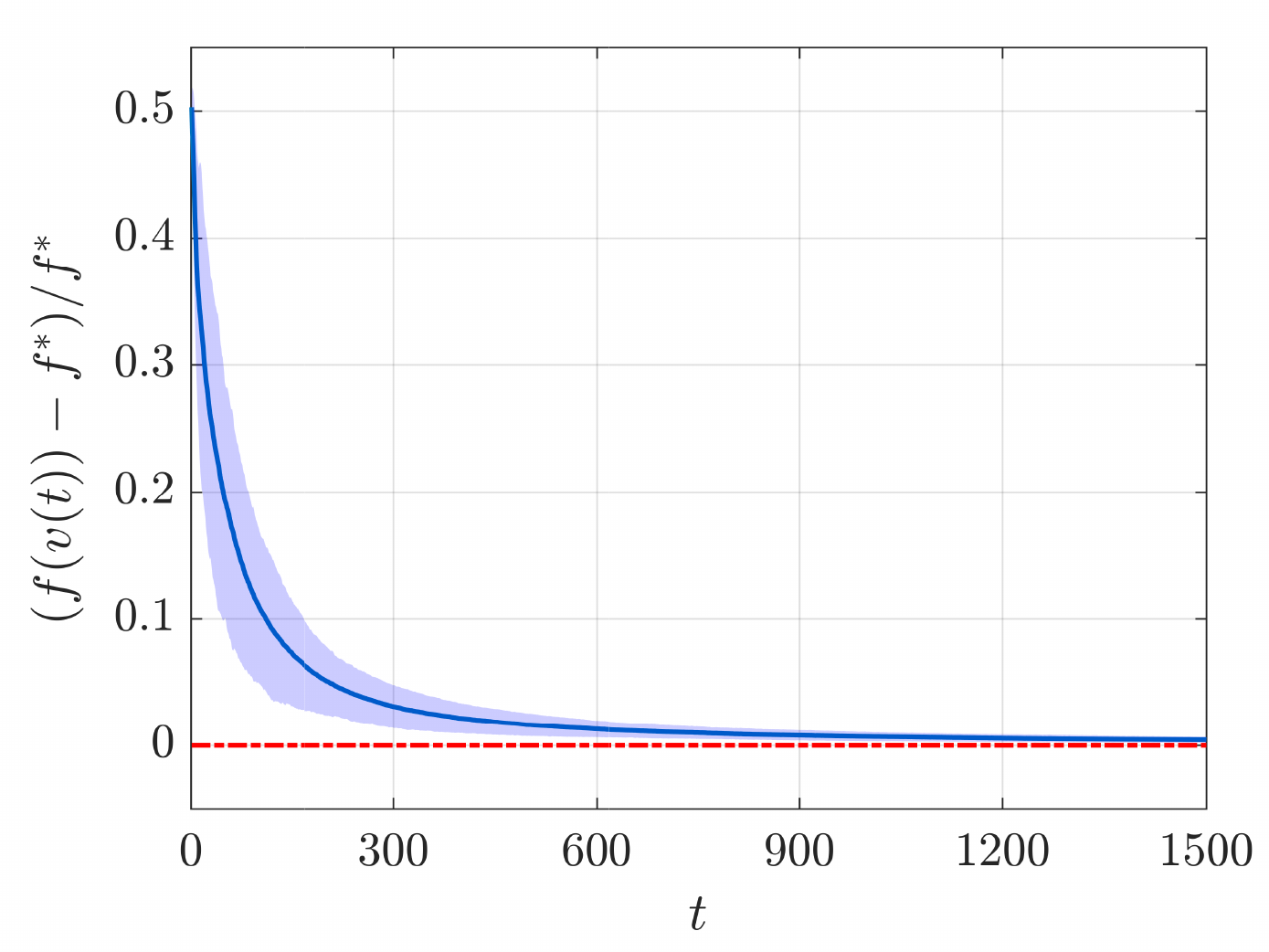}}
\hfil
\subfloat[][Noisy.]{
\label{fig:depn_unknown_noisy}%
\includegraphics[width=.45\textwidth]{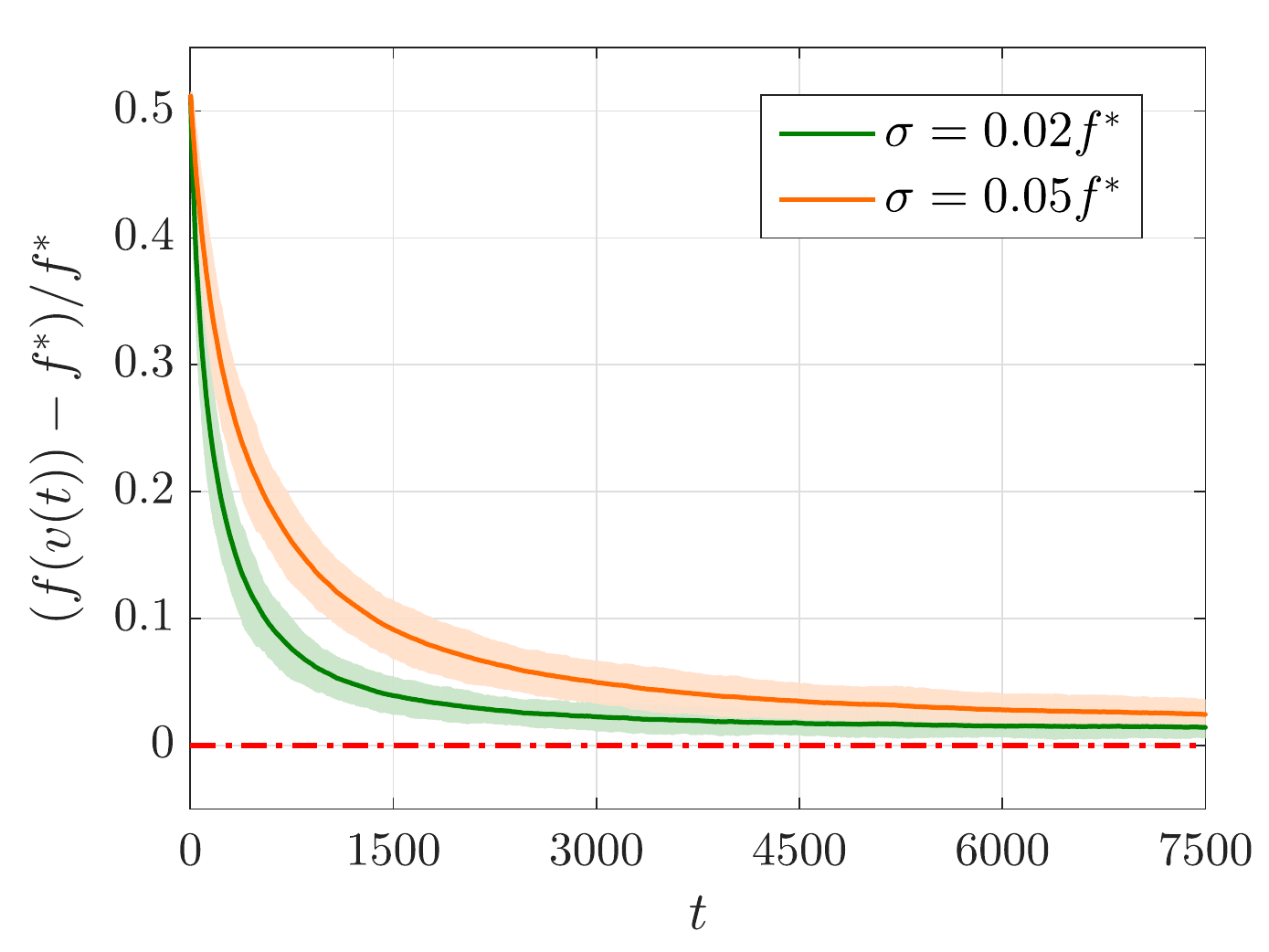}}
\centering
\caption{Numerical results for the distributed routing control problem, where the agents do not know the local function dependence. The dark curves represent the average of the relative optimality gap $(f(v(t))-f^\ast)/f^\ast$ over $100$ random trials, and the light bands around the average trajectory indicates a $3.0$-standard deviation confidence interval.
}
\label{fig:depn_unknown}
\end{figure}

\paragraph{Noiseless setting.} We first simulate the setting where the function value observations are noiseless, i.e., $\sigma=0$. In this case, we set $\eta=3\times 10^{-2}/f^\ast$, $u=2\times 10^{-3}$ and $\delta=0.05$. We do not assume knowledge of the function dependence in this setting. The results are shown in Fig~\ref{fig:depn_unknown_noiseless}. We can see that the agents are able to approach the optimal objective value by our ZFO algorithm with satisfactory convergence behavior.

\paragraph{Noisy setting.} We then consider the setting where the function value observations are noisy. We simulate two cases for this case:
\begin{enumerate}
\item $\sigma=0.02f^\ast$, and $\eta=5\times 10^{-3}/f^\ast$, $u=4\times 10^{-3}$, $\delta=0.1$.
\item $\sigma=0.05f^\ast$, and $\eta=2\times 10^{-3}/f^\ast$, $u=6\times 10^{-3}$, $\delta=0.15$.
\end{enumerate}
It can be seen that as the noise level $\sigma$ increases, we choose to decrease the step size $\eta$ and increase the smoothing radius $u$ as well as the shrinkage factor $\delta$, in order to suppress the variance associated with the noise in the zeroth-order gradient estimator. We do not assume knowledge of the function dependence for both cases. The results are shown in Fig.~\ref{fig:depn_unknown_noisy}. Compared to the noiseless case, we can see that the convergence is substantially slower. Also, as the noise level increases, the convergence becomes slower, and the final optimality gap becomes larger.

\begin{figure}[t]
\subfloat[][Noiseless.]{
\label{fig:depn_known_noiseless}%
\includegraphics[width=.45\textwidth]{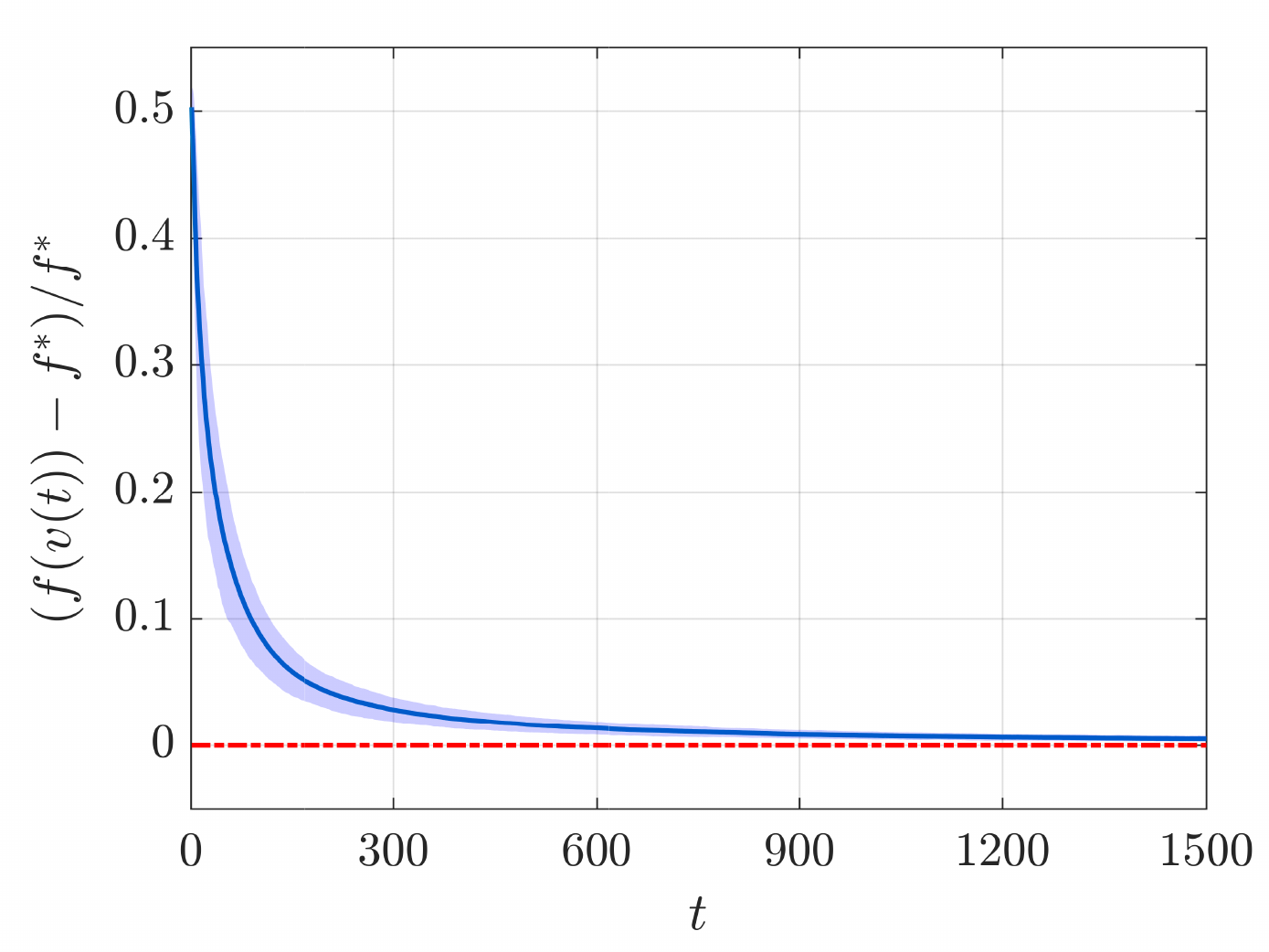}}
\hfil
\subfloat[][Noisy.]{
\label{fig:depn_known_noisy}%
\includegraphics[width=.45\textwidth]{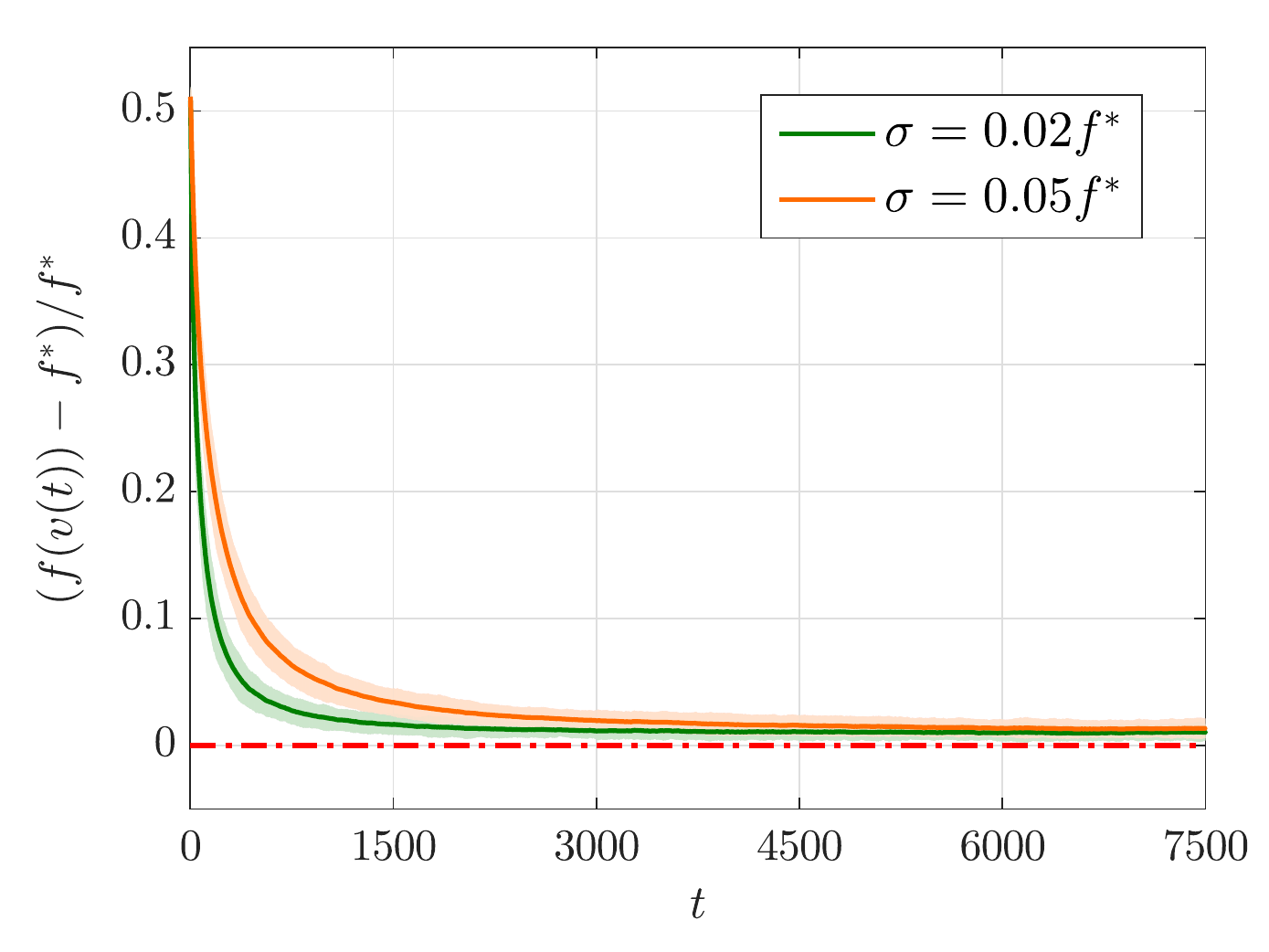}}
\centering
\caption{Numerical results for the distributed routing control problem, where the agents utilize the local function dependence as in~\eqref{eq:partial_grad_est_final_known_depn}. The dark curves represent the average of the relative optimality gap $(f(v(t))-f^\ast)/f^\ast$ over $100$ random trials, and the light bands around the average trajectory indicates a $3.0$-standard deviation confidence interval.}
\label{fig:depn_known}
\end{figure}

\paragraph{Known local function dependence.} In this setting, we assume that each agent knows the set $\mathcal{A}_i$ that characterizes the local function dependence, and employs~\eqref{eq:partial_grad_est_final_known_depn} for gradient estimation. Fig.~\ref{fig:depn_known} shows the simulation results for the following three cases:
\begin{enumerate}
\item $\sigma=0$, and $\eta=3\times 10^{-2}/f^\ast$, $u=2\times 10^{-3}$, $\delta=0.05$.
\item $\sigma=0.02f^\ast$, and $\eta=1.5\times 10^{-2}/f^\ast$, $u=4\times 10^{-3}$, $\delta=0.1$.
\item $\sigma=0.05f^\ast$, and $\eta=6\times 10^{-3}/f^\ast$, $u=6\times 10^{-3}$, $\delta=0.15$.
\end{enumerate}
Notice that for the noiseless case, we choose the same algorithmic parameters as in the setting without utilizing $\mathcal{A}_i$, and Fig.~\ref{fig:depn_known_noiseless} shows that, the trajectory of $(f(v(t))-f^\ast)/f^\ast$ has reduced variation but similar average behavior compared with the setting without exploiting $\mathcal{A}_i$. Numerical simulations with other choices of parameters suggest that, except for reduced variation in the convergence trajectory, utilizing local function dependence makes little difference in the average convergence behavior; we suspect that this is because the second-moment of the gradient estimator does not have dominating influence on the average convergence behavior in the noiseless case. On the other hand, for the noisy setting, our simulation shows that utilizing $\mathcal{A}_i$ indeed leads to improved convergence behavior. In the simulation, we increase the step size $\eta$ compared to the setting without using local function dependence, and it can be seen from Fig.~\ref{fig:depn_known_noisy} that both the convergence rates and the final optimality gaps are improved when the agents utilize the local function dependence.

\section{Conclusion and Future Directions}

In this paper, we consider the cooperative multi-agent optimization problem, in which a group of agents determine their actions cooperatively through observations of only their local cost values, and each local cost is affected by all agents’ actions. We propose a zeroth-order feedback optimization (ZFO) algorithm for cooperative multi-agent optimization, and conduct theoretical analysis on its performance. Specifically, we provide complexity bounds of our algorithm for both constrained convex and unconstrained nonconvex problems with noiseless and noisy function value observations. We also briefly discuss the benefits of utilizing local function dependence in the algorithm. Numerically, we test our algorithm on a distributed routing control problem.

Some interesting future directions include 1) extending the algorithm to handle coupled constraints on the actions, 2) analysis for constrained nonconvex problems, 3) improving the algorithm's complexity by incorporating, e.g., variance reduction techniques, 4) further investigation on how local function dependence can be exploited, and how it interacts with the communication network topology.

\bibliography{references.bib}

\begin{thebibliography}{42}
\providecommand{\natexlab}[1]{#1}
\providecommand{\url}[1]{\texttt{#1}}
\expandafter\ifx\csname urlstyle\endcsname\relax
  \providecommand{\doi}[1]{doi: #1}\else
  \providecommand{\doi}{doi: \begingroup \urlstyle{rm}\Url}\fi

\bibitem[Agarwal and Duchi(2011)]{agarwal2011distributed}
A.~Agarwal and J.~C. Duchi.
\newblock Distributed delayed stochastic optimization.
\newblock In \emph{Proceedings of the 24th International Conference on Neural
  Information Processing Systems}, page 873–881, 2011.

\bibitem[Agarwal et~al.(2010)Agarwal, Dekel, and Xiao]{agarwal2010optimal}
A.~Agarwal, O.~Dekel, and L.~Xiao.
\newblock Optimal algorithms for online convex optimization with multi-point
  bandit feedback.
\newblock In \emph{Proceedings of the 23rd Annual Conference on Learning
  Theory}, pages 28--40, 2010.

\bibitem[Agarwal et~al.(2013)Agarwal, Foster, Hsu, Kakade, and
  Rakhlin]{agarwal2013stochastic}
A.~Agarwal, D.~P. Foster, D.~Hsu, S.~M. Kakade, and A.~Rakhlin.
\newblock Stochastic convex optimization with bandit feedback.
\newblock \emph{SIAM Journal on Optimization}, 23\penalty0 (1):\penalty0
  213--240, 2013.

\bibitem[Bach and Perchet(2016)]{bach2016highly}
F.~Bach and V.~Perchet.
\newblock Highly-smooth zero-th order online optimization.
\newblock In \emph{Proceedings of the 29th Annual Conference on Learning
  Theory}, pages 257--283, 2016.

\bibitem[Beck and Teboulle(2003)]{beck2003mirror}
A.~Beck and M.~Teboulle.
\newblock Mirror descent and nonlinear projected subgradient methods for convex
  optimization.
\newblock \emph{Operations Research Letters}, 31\penalty0 (3):\penalty0
  167--175, 2003.

\bibitem[Boucheron et~al.(2013)Boucheron, Lugosi, and
  Massart]{boucheron2013concentration}
S.~Boucheron, G.~Lugosi, and P.~Massart.
\newblock \emph{Concentration Inequalities: A Nonasymptotic Theory of
  Independence}.
\newblock Oxford University Press, 2013.

\bibitem[Candogan et~al.(2010)Candogan, Menache, Ozdaglar, and
  Parrilo]{candogan2010near}
U.~O. Candogan, I.~Menache, A.~Ozdaglar, and P.~A. Parrilo.
\newblock Near-optimal power control in wireless networks: A potential game
  approach.
\newblock In \emph{2010 Proceedings IEEE INFOCOM}, pages 1--9, 2010.

\bibitem[Chang et~al.(2014)Chang, Hong, and Wang]{chang2014multi}
T.-H. Chang, M.~Hong, and X.~Wang.
\newblock Multi-agent distributed optimization via inexact consensus {ADMM}.
\newblock \emph{IEEE Transactions on Signal Processing}, 63\penalty0
  (2):\penalty0 482--497, 2014.

\bibitem[Cort\'{e}s et~al.(2004)Cort\'{e}s, Martinez, Karatas, and
  Bullo]{cortes2004coverage}
J.~Cort\'{e}s, S.~Martinez, T.~Karatas, and F.~Bullo.
\newblock Coverage control for mobile sensing networks.
\newblock \emph{IEEE Transactions on robotics and Automation}, 20\penalty0
  (2):\penalty0 243--255, 2004.

\bibitem[Dougherty and Guay(2016)]{dougherty2016extremum}
S.~Dougherty and M.~Guay.
\newblock An extremum-seeking controller for distributed optimization over
  sensor networks.
\newblock \emph{IEEE Transactions on Automatic Control}, 62\penalty0
  (2):\penalty0 928--933, 2016.

\bibitem[Duchi et~al.(2015)Duchi, Jordan, Wainwright, and
  Wibisono]{duchi2015optimal}
J.~C. Duchi, M.~I. Jordan, M.~J. Wainwright, and A.~Wibisono.
\newblock Optimal rates for zero-order convex optimization: The power of two
  function evaluations.
\newblock \emph{IEEE Transactions on Information Theory}, 61\penalty0
  (5):\penalty0 2788--2806, 2015.

\bibitem[Flaxman et~al.(2005)Flaxman, Kalai, and McMahan]{flaxman2004online}
A.~D. Flaxman, A.~T. Kalai, and H.~B. McMahan.
\newblock Online convex optimization in the bandit setting: gradient descent
  without a gradient.
\newblock In \emph{Proceedings of the Sixteenth Annual ACM-SIAM Symposium on
  Discrete Algorithms}, pages 385--394, 2005.

\bibitem[Ghadimi and Lan(2013)]{ghadimi2013stochastic}
S.~Ghadimi and G.~Lan.
\newblock Stochastic first-and zeroth-order methods for nonconvex stochastic
  programming.
\newblock \emph{SIAM Journal on Optimization}, 23\penalty0 (4):\penalty0
  2341--2368, 2013.

\bibitem[Hajinezhad et~al.(2019)Hajinezhad, Hong, and
  Garcia]{hajinezhad2019zone}
D.~Hajinezhad, M.~Hong, and A.~Garcia.
\newblock {ZONE}: Zeroth-order nonconvex multiagent optimization over networks.
\newblock \emph{IEEE Transactions on Automatic Control}, 64\penalty0
  (10):\penalty0 3995--4010, 2019.

\bibitem[Larson et~al.(2019)Larson, Menickelly, and Wild]{larson2019derivative}
J.~Larson, M.~Menickelly, and S.~M. Wild.
\newblock Derivative-free optimization methods.
\newblock \emph{Acta Numerica}, 28:\penalty0 287--404, 2019.

\bibitem[Li and Marden(2013)]{li2013designing}
N.~Li and J.~R. Marden.
\newblock Designing games for distributed optimization.
\newblock \emph{IEEE Journal of Selected Topics in Signal Processing},
  7\penalty0 (2):\penalty0 230--242, 2013.

\bibitem[Li et~al.(2019)Li, Tang, Zhang, and Li]{li2019distributed}
Y.~Li, Y.~Tang, R.~Zhang, and N.~Li.
\newblock Distributed reinforcement learning for decentralized linear quadratic
  control: A derivative-free policy optimization approach.
\newblock \emph{arXiv preprint arXiv:1912.09135}, 2019.

\bibitem[Lian et~al.(2015)Lian, Huang, Li, and Liu]{lian2015asynchronous}
X.~Lian, Y.~Huang, Y.~Li, and J.~Liu.
\newblock Asynchronous parallel stochastic gradient for nonconvex optimization.
\newblock In \emph{Proceedings of the 28th International Conference on Neural
  Information Processing Systems - Volume 2}, pages 2737--2745, 2015.

\bibitem[Lian et~al.(2016)Lian, Zhang, Hsieh, Huang, and
  Liu]{lian2016comprehensive}
X.~Lian, H.~Zhang, C.-J. Hsieh, Y.~Huang, and J.~Liu.
\newblock A comprehensive linear speedup analysis for asynchronous stochastic
  parallel optimization from zeroth-order to first-order.
\newblock In \emph{Proceedings of the 30th International Conference on Neural
  Information Processing Systems}, pages 3062--3070, 2016.

\bibitem[Liu and Wright(2015)]{liu2015asynchronous}
J.~Liu and S.~J. Wright.
\newblock Asynchronous stochastic coordinate descent: Parallelism and
  convergence properties.
\newblock \emph{SIAM Journal on Optimization}, 25\penalty0 (1):\penalty0
  351--376, 2015.

\bibitem[Malik et~al.(2020)Malik, Pananjady, Bhatia, Khamaru, Bartlett, and
  Wainwright]{malik2019derivative}
D.~Malik, A.~Pananjady, K.~Bhatia, K.~Khamaru, P.~L. Bartlett, and M.~J.
  Wainwright.
\newblock Derivative-free methods for policy optimization: Guarantees for
  linear quadratic systems.
\newblock \emph{Journal of Machine Learning Research}, 21\penalty0
  (21):\penalty0 1--51, 2020.

\bibitem[Marden et~al.(2013)Marden, Ruben, and Pao]{marden2013model}
J.~R. Marden, S.~D. Ruben, and L.~Y. Pao.
\newblock A model-free approach to wind farm control using game theoretic
  methods.
\newblock \emph{IEEE Transactions on Control Systems Technology}, 21\penalty0
  (4):\penalty0 1207--1214, 2013.

\bibitem[Marden et~al.(2014)Marden, Young, and Pao]{marden2014achieving}
J.~R. Marden, H.~P. Young, and L.~Y. Pao.
\newblock Achieving {Pareto} optimality through distributed learning.
\newblock \emph{SIAM Journal on Control and Optimization}, 52\penalty0
  (5):\penalty0 2753--2770, 2014.

\bibitem[Menon and Baras(2013{\natexlab{a}})]{menon2013convergence}
A.~Menon and J.~S. Baras.
\newblock Convergence guarantees for a decentralized algorithm achieving
  {Pareto} optimality.
\newblock In \emph{Proceedings of the 2013 American Control Conference}, pages
  1932--1937, 2013{\natexlab{a}}.

\bibitem[Menon and Baras(2013{\natexlab{b}})]{menon2013distributed}
A.~Menon and J.~S. Baras.
\newblock A distributed learning algorithm with bit-valued communications for
  multi-agent welfare optimization.
\newblock In \emph{Proceedings of the 52nd IEEE Conference on Decision and
  Control}, pages 2406--2411, 2013{\natexlab{b}}.

\bibitem[Menon and Baras(2014)]{menon2014collaborative}
A.~Menon and J.~S. Baras.
\newblock Collaborative extremum seeking for welfare optimization.
\newblock In \emph{Proceedings of the 53rd IEEE Conference on Decision and
  Control}, pages 346--351, 2014.

\bibitem[Nedi\'{c}(2010)]{nedic2010asynchronous}
A.~Nedi\'{c}.
\newblock Asynchronous broadcast-based convex optimization over a network.
\newblock \emph{IEEE Transactions on Automatic Control}, 56\penalty0
  (6):\penalty0 1337--1351, 2010.

\bibitem[Nedic and Ozdaglar(2009)]{nedic2009distributed}
A.~Nedic and A.~Ozdaglar.
\newblock Distributed subgradient methods for multi-agent optimization.
\newblock \emph{IEEE Transactions on Automatic Control}, 54\penalty0
  (1):\penalty0 48--61, 2009.

\bibitem[Nedi\'{c} and Ozdaglar(2010)]{nedic201010}
A.~Nedi\'{c} and A.~Ozdaglar.
\newblock Cooperative distributed multi-agent optimization.
\newblock In D.~P. Palomar and Y.~C. Eldar, editors, \emph{Convex Optimization
  in Signal Processing and Communications}, pages 340--386. Cambridge
  University Press, 2010.

\bibitem[Nesterov and Spokoiny(2017)]{nesterov2017random}
Y.~Nesterov and V.~Spokoiny.
\newblock Random gradient-free minimization of convex functions.
\newblock \emph{Foundations of Computational Mathematics}, 17\penalty0
  (2):\penalty0 527--566, 2017.

\bibitem[Pu and Nedi{\'c}(2018)]{pu2018distributed}
S.~Pu and A.~Nedi{\'c}.
\newblock A distributed stochastic gradient tracking method.
\newblock In \emph{Proceedings of the 57th IEEE Conference on Decision and
  Control}, pages 963--968, 2018.

\bibitem[Qu and Li(2017)]{qu2017harnessing}
G.~Qu and N.~Li.
\newblock Harnessing smoothness to accelerate distributed optimization.
\newblock \emph{IEEE Transactions on Control of Network Systems}, 5\penalty0
  (3):\penalty0 1245--1260, 2017.

\bibitem[Reddi et~al.(2016)Reddi, Hefny, Sra, P\'{o}cz\'{o}s, and
  Smola]{reddi2016stochastic}
S.~J. Reddi, A.~Hefny, S.~Sra, B.~P\'{o}cz\'{o}s, and A.~Smola.
\newblock Stochastic variance reduction for nonconvex optimization.
\newblock In \emph{Proceedings of the 33rd International Conference on Machine
  Learning}, volume~48 of \emph{Proceedings of Machine Learning Research},
  pages 314--323, 2016.

\bibitem[Sahu et~al.(2018)Sahu, Jakovetic, Bajovic, and
  Kar]{sahu2018distributed}
A.~K. Sahu, D.~Jakovetic, D.~Bajovic, and S.~Kar.
\newblock Distributed zeroth order optimization over random networks: A
  {Kiefer-Wolfowitz} stochastic approximation approach.
\newblock In \emph{Proceedings of the 57th IEEE Conference on Decision and
  Control}, pages 4951--4958, 2018.

\bibitem[Scholbrock(2011)]{scholbrock2011optimizing}
A.~K. Scholbrock.
\newblock Optimizing wind farm control strategies to minimize wake loss
  effects.
\newblock Master's thesis, University of Colorado, Boulder, 2011.

\bibitem[Shamir(2017)]{shamir2017optimal}
O.~Shamir.
\newblock An optimal algorithm for bandit and zero-order convex optimization
  with two-point feedback.
\newblock \emph{Journal of Machine Learning Research}, 18\penalty0
  (1):\penalty0 1703--1713, 2017.

\bibitem[Shi et~al.(2015)Shi, Ling, Wu, and Yin]{shi2015extra}
W.~Shi, Q.~Ling, G.~Wu, and W.~Yin.
\newblock {EXTRA}: An exact first-order algorithm for decentralized consensus
  optimization.
\newblock \emph{SIAM Journal on Optimization}, 25\penalty0 (2):\penalty0
  944--966, 2015.

\bibitem[Tang et~al.(2020{\natexlab{a}})Tang, Ren, and Li]{tang2020zeroth}
Y.~Tang, Z.~Ren, and N.~Li.
\newblock Zeroth-order feedback optimization for cooperative multi-agent
  systems.
\newblock In \emph{Proceedings of the 59th IEEE Conference on Decision and
  Control}, pages 3649--3656, 2020{\natexlab{a}}.

\bibitem[Tang et~al.(2020{\natexlab{b}})Tang, Zhang, and
  Li]{tang2020distributed}
Y.~Tang, J.~Zhang, and N.~Li.
\newblock Distributed zero-order algorithms for nonconvex multi-agent
  optimization.
\newblock \emph{IEEE Transactions on Control of Network Systems},
  2020{\natexlab{b}}.

\bibitem[Torczon(1997)]{torczon1997convergence}
V.~Torczon.
\newblock On the convergence of pattern search algorithms.
\newblock \emph{SIAM Journal on Optimization}, 7\penalty0 (1):\penalty0 1--25,
  1997.

\bibitem[Yu et~al.(2019)Yu, Ho, and Yuan]{yu2019distributed}
Z.~Yu, D.~W.~C. Ho, and D.~Yuan.
\newblock Distributed randomized gradient-free mirror descent algorithm for
  constrained optimization.
\newblock \emph{arXiv preprint arXiv:1903.04157}, 2019.

\bibitem[Zhang and Kwok(2014)]{zhang2014asynchronous}
R.~Zhang and J.~Kwok.
\newblock Asynchronous distributed {ADMM} for consensus optimization.
\newblock In \emph{Proceedings of the 31st International Conference on Machine
  Learning}, volume~32 of \emph{Proceedings of Machine Learning Research},
  pages II--1701--II--1709, 2014.

\end{thebibliography}

\appendix
\section{Proof of Lemma~\ref{lemma:2point_bias}}
\label{sec:proof:lemma:2point_bias}
We denote $\mathcal{S}(x,u)\coloneqq \prod_{i=1}^d\mathcal{S}_i(x^i,u)$ for $x=(x^1,\ldots,x^n)\in\operatorname{int}\mathcal{X}$ and $u>0$. We have
$
\mathcal{P}_{\mathcal{S}(x,u)}
[z]
=(\mathcal{P}_{\mathcal{S}_1(x^1,u)}[z^1],
\ldots,
\mathcal{P}_{\mathcal{S}_n(x^n,u)}[z^n])
$
for any $z=(z^1,\ldots,z^n)\in\mathbb{R}^d$.

Let $x=(x^1,\ldots,x^n)\in (1-\delta)\mathcal{X}$ be arbitrary. Since $\underline{r}\mathbb{B}_d\subseteq\mathcal{X}$ and $\mathcal{X}$ is convex, we have~\citep[Observation 2]{flaxman2004online}
$$
(1-\delta)\mathcal{X}
+\delta \underline{r}\mathbb{B}_d
\subseteq
(1-\delta)\mathcal{X}
+\delta \mathcal{X}
=\mathcal{X},
$$
which implies $x+ \delta\underline{r}
\mathbb{B}_d\subseteq\mathcal{X}$. Consequently, the distribution $\mathcal{Z}(x,u)$ has a standard Gaussian density function in the interior of $u^{-1}\delta\underline{r}\mathbb{B}_d$, which we denote by $p_{\mathcal{N}(0,I_d)}(z)$. Then we have
$$
\begin{aligned}
& \left\|\mathbb{E}_{z\sim\mathcal{Z}(x,u)}\!\left[
\mathsf{G}_h(x;u,z)
\right]
-
\int_{\mathbb{R}^d} \mathsf{G}_h(x;u,z)
p_{\mathcal{N}(0,I_{d_i})}(z)
\mathsf{1}_{u^{-1}\delta\underline{r}\mathbb{B}_d}(z)
\,dz
\right\| \\
\leq\ &
\sup\left\{\|\mathsf{G}_h(x;u,z)\|
:z\in\mathcal{S}(x,u),\|z\|\geq \frac{\delta\underline{r}}{u}\right\}
\cdot
\left(1-\mathbb{P}_{z\sim\mathcal{Z}(x,u)}
\!\left(\|z\|< \frac{\delta\underline{r}}{u}\right)\right) \\
\leq\ &
\frac{2G\overline{R}}{u}
\!\left(1-
\!\int_{\mathbb{R}^d}\!
\mathsf{1}_{\|z\|< \delta\underline{r}/u}(z)
\cdot p_{\mathcal{N}(0,I_d)}(z)\,dz\right)
=
\frac{2G\overline{R}}{u}
\cdot
\mathbb{P}_{z\sim\mathcal{N}(0,I_d)}
\!\left(\sum_{i=1}^d z_i^2\geq
\frac{\delta^2\underline{r}^2}{u^2}
\right).
\end{aligned}
$$
We note that for any $\beta> 1$,
\begin{equation}\label{eq:chi-square_tail}
\begin{aligned}
& \mathbb{P}_{z\sim\mathcal{N}(0,I_d)}
\!\left(\sum\nolimits_{i=1}^d
\! z_i^2\geq
\beta^2 d
\right) \\
=\ &
\mathbb{P}_{z\sim\mathcal{N}(0,I_d)}
\!\left(
\exp\!\left(\mfrac{1 \!-\! \beta^{-2}}{2}\sum\nolimits_{i=1}^d z_i^2\right)
\!\geq
\exp\!\left(\mfrac{1 \!-\! \beta^{-2}}{2}\beta^2d\right)
\!\right) \\
\leq\ &
\exp\!\mfrac{d(1-\beta^2)}{2}\,
\mathbb{E}_{z\sim\mathcal{N}(0,I_d)}
\!\left[\exp\!\left(\mfrac{1-\beta^2}{2}
\sum\nolimits_{i=1}^d z_i^2\right)\right]
=
\left(\beta^2e^{1-\beta^2}\right)^{d/2}.
\end{aligned}
\end{equation}
Therefore
$$
\begin{aligned}
& \left\|\mathbb{E}_{z\sim\mathcal{Z}(x,u)}\!\left[
\mathsf{G}_h(x;u,z)
\right]
-
\int_{\mathbb{R}^d} \mathsf{G}_h(x;u,z)
p_{\mathcal{N}(0,I_{d_i})}(z)
\mathsf{1}_{u^{-1}\delta\underline{r}\mathbb{B}_d}(z)
\,dz
\right\| \\
\leq\ &
\frac{2G\overline{R}}{u}
\left[
\frac{\delta^2\underline{r}^2}{u^2d}
\exp\!\left(1-\frac{\delta^2\underline{r}^2}{u^2d}\right)
\right]^{d/2}
\leq
\frac{2G\overline{R}}{u}
\exp\!\left(\frac{d}{2}-\frac{\delta^2\underline{r}^2}{4u^2}\right),
\end{aligned}
$$
where we used $x\leq e^{x/2}$ for any real $x$.

Now let
$$
\tilde{p}_{u}(s) = 
\left\{
\begin{aligned}
& \frac{1}{(2\pi)^{d/2}}
\left[\exp\!\left(-\mfrac{s}{2}\right)-
\exp\!\left(-\mfrac{\delta^2\underline{r}^2}{2u^2}\right)\right],
& \quad & 0\leq s\leq \mfrac{\delta^2\underline{r}^2}{u^2}, \\
& 0 & & s > \mfrac{\delta^2\underline{r}^2}{u^2}.
\end{aligned}
\right.
$$
Then we have
\begin{align*}
& \nabla_x \int_{\mathbb{R}^d} h(x+uy) \,\tilde{p}_{u}(\|y\|^2)\,dy \\
=\ &
\nabla_x \int_{\mathbb{R}^d} h(v) \,\tilde{p}_{u}\!\left(\left\|\mfrac{v-x}{u}\right\|^2\right)\,\frac{1}{u}dv
=
\int_{\mathbb{R}^d} h(v)\, \nabla_x\!\left[\tilde{p}_{u}\!\left(\left\|\mfrac{v-x}{u}\right\|^2\right)\right]\,\frac{1}{u}dv \\
=\ &
\int_{\mathbb{R}^d}
h(v) \left[-2\tilde{p}'_{u}\!\left(\left\|\mfrac{v-x}{u}\right\|^2\right)\right]
\frac{v-x}{u}\cdot\frac{1}{u^2}\,dv
=
\int_{\mathbb{R}^d}
\frac{h(x+uz)}{u}z
\cdot \left(-2\tilde{p}'_{u}\!\left(\|z\|^2\right)\right)\,dz \\
=\ &
\int_{\mathbb{R}^d} \mathsf{G}_h(x;u,z)
p_{\mathcal{N}(0,I_{d_i})}(z)
\mathsf{1}_{u^{-1}\delta\underline{r}\mathbb{B}_d}(z)\,dz.
\end{align*}
We now let $\mathcal{Y}(u)$ be the distribution with density
$
p_{\mathcal{Y}(u)}(y) = \mfrac{\tilde{p}_{u}(\|y\|^2)}{\int_{\mathbb{R}^d}
\tilde{p}_{u}(\|y\|^2)\,dy}
$,
and define
\begin{equation*}\label{eq:def_kappa}
\begin{aligned}
h^{u}(w)
\coloneqq\ &
\mathbb{E}_{y\sim\mathcal{Y}(u)}
\!\left[h(w+uy)\right],
\qquad w\in(1-\delta)\mathcal{X}, \\
\kappa(u)
\coloneqq\ &
\int_{\mathbb{R}^d}
\tilde{p}_{u}(\|y\|^2)\,dy
=
\frac{1}{2^{d/2-1}\Gamma\!\left(\frac{d}{2}\right)}
\int_0^{u^{-1}\delta\underline{r}}
\left[\exp\!\left(-\mfrac{s}{2}\right)-
\exp\!\left(-\mfrac{\delta^2\underline{r}^2}{2u^2}\right)\right]s^{d-1}\,ds.
\end{aligned}
\end{equation*}
We then have
$$
\int_{\mathbb{R}^d} \mathsf{G}_h(x;u,z)
p_{\mathcal{N}(0,I_{d_i})}(z)
\mathsf{1}_{u^{-1}\delta\underline{r}\mathbb{B}_d}(z)
\,dz
=\kappa(u)\nabla h^u(x).
$$
It's not hard to see that $h^{u}$ is $G$-Lipschitz and $L$-smooth on $\mathcal{X}$. Moreover, since $\mathcal{Y}(u)$ is isotropic, we have
$$
h^{u}(x)-h(x)
=
\mathbb{E}_{y\sim\mathcal{Y}(u)}[h(x+uy)-h(x)
-\langle\nabla f(x),uy\rangle],
$$
and therefore
$$
\left|h^{u}(x)-h(x)\right|
\leq
\frac{L}{2}u^2\,\mathbb{E}_{y\sim\mathcal{Y}(u)}
\left[\|y\|^2\right],
$$
where the first inequality follows from the convexity and the second follows from the $L$-smoothness of $h$. We further notice that
$$
\mathbb{E}_{y\sim\mathcal{Y}(u}
\left[\|y\|^2\right]
=
\frac{1}{\int_{\mathbb{R}^d}
\tilde{p}_{u}(\|y\|^2)\,dy}
\int_{\mathbb{R}^d}
\|y\|^2\, \tilde{p}_{u}(\|y\|^2)\,dy,
$$
and since
\begin{align*}
& d\int_0^{+\infty}\tilde{p}_{u}(s^2)s^{d-1}\,ds
-\int_0^{+\infty}\tilde{p}_{u}(s^2)s^{d+1}\,ds \\
=\ &
\exp\!\left(-\frac{\delta^2\underline{r}^2}{2u^2}\right)
\!\left(\frac{(\delta\underline{r}/u)^{d+2}}{d+2}
-(\delta\underline{r}/u)^{d}\right)
+2^{d/2}
\!\left(
\frac{d}{2}\gamma\!\left(\frac{d}{2},\frac{\delta^2\underline{r}^2}{2u^2}\right)
\!-\gamma\!\left(1\!+\!\frac{d}{2},\frac{\delta^2\underline{r}^2}{2u^2}\right)
\!\right) \\
=\ &
\exp\!\left(-\frac{\delta^2\underline{r}^2}{2u^2}\right)
\frac{(\delta\underline{r}/u)^{d+2}}{d+2} \geq 0
\end{align*}
(where we used $\gamma(s+1,x)=s\gamma(s,x)-x^s e^{-x}$ for the lower incomplete Gamma function $\gamma(s,x)$), we see that
$
\mathbb{E}_{y\sim\mathcal{Y}(u)}
\left[\|y\|^2\right]
\leq d
$, and therefore
$
|h^{u}(x)-h(x)|
\leq\mfrac{1}{2}u^2Ld
$. We also have
$$
\left|h^u(x)-h(x)\right|
\leq
\mathbb{E}_{y\sim\mathcal{Y}(u)}
[|h(x+uy)-h(x)|]
\leq
uG\,\mathbb{E}_{y\sim\mathcal{Y}(u)}
[\|y\|]
\leq uG\sqrt{d}.
$$

Finally, for the quantity $\kappa(u)$, it's straightforward to see that $\kappa(u)\leq 1$, and we proceed to derive a lower bound on $\kappa(u)$. Let $\beta=u^{-1}\delta\underline{r}/\sqrt{d}$, and we have
$$
1-\kappa(u)
=
\frac{\left(\frac{d}{2}\right)^{d/2-1}\left(\beta e^{-\beta^2/2}\right)^d
+\Gamma\!\left(\frac{d}{2},\beta^2\frac{d}{2}\right)}{\Gamma\!\left(\frac{d}{2}\right)}.
$$
Noticing that
$$
\frac{\Gamma\!\left(\frac{d}{2},\beta^2\frac{d}{2}\right)}{\Gamma\!\left(\frac{d}{2}\right)}
=\mathbb{P}_{z\sim\mathcal{N}(0,I_d)}
\!\left(
\sum_{i=1}^d z_i^2\geq \beta^2d
\right),
$$
By \eqref{eq:chi-square_tail} and Stirling's formula
$
\Gamma\!\left(d/2\right)
\geq\sqrt{2\pi}\left(d/2\right)^{\frac{d-1}{2}} e^{-d/2}
$,
we have
$$
\begin{aligned}
1-\kappa(u)
\leq\ &
\frac{1}{\sqrt{\pi d}}
\left(\beta e^{-(\beta^2-1)/2}\right)^d
+
\left(\beta^2e^{1-\beta^2}\right)^{d/2}
\leq 1/200.
\end{aligned}
$$
when $d\geq 2$ and $\beta=u^{-1}\delta\underline{r}/\sqrt{d}\geq 3$. 
We then get
$$
\left\|\mathbb{E}_{z\sim\mathcal{Z}(x,u)}\!\left[
\mathsf{G}_h(x;u,z)
\right]
-
\kappa(u)
\nabla h^{u}(x)
\right\|
\leq 
\frac{2G\overline{R}}{u}\exp\!\left(\frac{d}{2}-\frac{\delta^2\underline{r}^2}{4u^2}\right),
$$
and $\kappa(u)\geq 199/200$ when $\delta\underline{r}\geq 3u\sqrt{d}$.

\section{Proof of Lemma~\ref{lemma:bound_2moment_grad_est_noisy}}\label{sec:proof:lemma:bound_2moment_grad_est_noisy}

We first provide some lemmas that will be used for subsequent analysis.
\begin{lemma}[Concentration inequality {\citep[Theorem 5.6]{boucheron2013concentration}}]
\label{lemma:concentration_inequality}
Let $h:\mathbb{R}^d\rightarrow\mathbb{R}$ be $G$-Lipschitz. Then we have
$$
\mathbb{P}_z\!\left(|h(z)-\mathbb{E}_z[h(z)]|\geq t\right)
\leq 2\exp\left(-t^2/(2G^2)\right),
$$
where $z\sim\mathcal{N}(0,I_d)$.
\end{lemma}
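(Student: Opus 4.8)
I would prove this classical Gaussian Lipschitz concentration inequality by the logarithmic-Sobolev/Herbst route. Write $\gamma=\mathcal{N}(0,I_d)$. Note first that $|h(z)|\le |h(0)|+G\|z\|$, so $h$ is $\gamma$-integrable and a priori has finite exponential moments, legitimizing the differentiations under the integral sign below. The plan is to reduce the two-sided tail bound to the sub-Gaussian estimate
$$
F(\lambda):=\mathbb{E}_\gamma\!\left[e^{\lambda(h-\mathbb{E}_\gamma h)}\right]\le e^{\lambda^2 G^2/2},\qquad\lambda\in\mathbb{R}.
$$
Given this, Markov's inequality applied to $e^{\lambda(h-\mathbb{E}_\gamma h)}$ with $\lambda=t/G^2$ gives $\mathbb{P}(h-\mathbb{E}_\gamma h\ge t)\le e^{-t^2/(2G^2)}$; running the same argument with $-h$ in place of $h$ and taking a union bound yields the stated inequality with the factor $2$. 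A second, routine reduction: convolving $h$ with the density $\phi_\varepsilon$ of $\mathcal{N}(0,\varepsilon^2 I_d)$ yields a $C^\infty$ function $h_\varepsilon$ that is still $G$-Lipschitz (by Rademacher, $\nabla h_\varepsilon=(\nabla h)*\phi_\varepsilon$ and $\|\nabla h\|\le G$ a.e.) and converges uniformly to $h$ as $\varepsilon\to0$, so it suffices to prove the moment generating function bound for smooth $h$ with $\|\nabla h\|\le G$ everywhere and then pass to the limit.

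For smooth $h$ I would invoke the Gaussian logarithmic Sobolev inequality $\operatorname{Ent}_\gamma(g^2)\le 2\,\mathbb{E}_\gamma[\|\nabla g\|^2]$, where $\operatorname{Ent}_\gamma(f)=\mathbb{E}_\gamma[f\log f]-\mathbb{E}_\gamma[f]\log\mathbb{E}_\gamma[f]$, applied to $g=e^{\lambda h/2}$. Writing $\Phi(\lambda)=\mathbb{E}_\gamma[e^{\lambda h}]$, one has $\operatorname{Ent}_\gamma(e^{\lambda h})=\lambda\Phi'(\lambda)-\Phi(\lambda)\log\Phi(\lambda)$ and $\mathbb{E}_\gamma[\|\nabla g\|^2]=\tfrac{\lambda^2}{4}\mathbb{E}_\gamma[\|\nabla h\|^2 e^{\lambda h}]\le\tfrac{\lambda^2 G^2}{4}\Phi(\lambda)$, so the inequality becomes $\lambda\Phi'(\lambda)-\Phi(\lambda)\log\Phi(\lambda)\le\tfrac{\lambda^2 G^2}{2}\Phi(\lambda)$, equivalently
$$
\frac{d}{d\lambda}\!\left(\frac{\log\Phi(\lambda)}{\lambda}\right)\le\frac{G^2}{2}.
$$
Integrating from $0^+$, using $\log\Phi(\lambda)/\lambda\to\mathbb{E}_\gamma[h]$ as $\lambda\to0$ (since $\Phi(\lambda)=1+\lambda\mathbb{E}_\gamma[h]+O(\lambda^2)$), gives $\log\Phi(\lambda)\le\lambda\mathbb{E}_\gamma[h]+\lambda^2 G^2/2$, which is exactly the sub-Gaussian bound on $F(\lambda)$ after centering.

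The one genuinely substantive ingredient — and the step I would expect to be the main obstacle if one does not simply cite it — is the Gaussian log-Sobolev inequality itself. I would obtain it by tensorization: subadditivity of entropy over independent coordinates reduces the $d$-dimensional statement to the scalar one, and the scalar Gaussian log-Sobolev inequality follows either from the Ornstein–Uhlenbeck semigroup (differentiating $t\mapsto\mathbb{E}_\gamma[(P_t g^2)\log(P_t g^2)]$ along the semigroup) or, as in Boucheron–Lugosi–Massart, as a central-limit limit of the two-point log-Sobolev inequality on $\{-1,+1\}^N$. Everything else — the mollification step, the Herbst ODE manipulation, and the Chernoff optimization — is bookkeeping. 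I note in passing that there is a softer interpolation proof, writing $h(X)-h(Y)=\int_0^{\pi/2}\langle\nabla h(X_\theta),X_\theta'\rangle\,d\theta$ with $X_\theta=X\sin\theta+Y\cos\theta$ for independent $X,Y\sim\gamma$ and using Jensen on the $\theta$-integral, which avoids the log-Sobolev inequality but only yields an exponent of order $t^2/(\pi^2 G^2)$; matching the sharp constant $t^2/(2G^2)$ really requires the log-Sobolev or Gaussian isoperimetric approach.
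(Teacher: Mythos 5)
Your proof is correct, but note that the paper does not prove this lemma at all: it is imported verbatim as Theorem 5.6 of the cited reference (Boucheron--Lugosi--Massart), and the argument you give --- mollification to reduce to smooth $G$-Lipschitz $h$, the Gaussian logarithmic Sobolev inequality applied to $e^{\lambda h/2}$, Herbst's differential inequality $\frac{d}{d\lambda}(\lambda^{-1}\log\Phi(\lambda))\leq G^2/2$, integration, the Chernoff bound at $\lambda=t/G^2$, and a union bound for the two-sided statement --- is exactly the proof given there. So your write-up is a faithful reconstruction of the standard argument behind the citation, with the correct sharp constant, and no gaps.
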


With the help of the concentration inequality, we can prove the following lemma.
\begin{lemma}\label{lemma:bound_2moment_grad_est_prelim}
Let $h:\mathbb{R}^d\rightarrow\mathbb{R}$ be $G$-Lipschitz. Then
$$
\mathbb{E}_z\!\left[
\left|\frac{h(z)-h(-z)}{2}z_i\right|^2\right]
\leq 12 G^2,
\quad z=(z_1,\ldots,z_d)\sim\mathcal{N}(0,I_d).
$$
\end{lemma}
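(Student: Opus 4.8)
The plan is to prove the dimension-free constant by reducing the quantity to a fourth‑moment estimate that is controlled by Gaussian concentration rather than by the crude Lipschitz bound. Writing $\mu$ for $\mathbb{E}_z[h(z)]$, I would first observe that $h(z)-h(-z) = (h(z)-\mu)-(h(-z)-\mu)$, so that $(h(z)-h(-z))^2 \le 2(h(z)-\mu)^2 + 2(h(-z)-\mu)^2$. Multiplying by $z_i^2/4$, taking expectations, and using that $\mathcal{N}(0,I_d)$ is invariant under $z\mapsto -z$ (which also leaves $z_i^2$ unchanged, while turning $h(-z)$ into $h(z)$), the two resulting terms coincide, and I get
$$
\mathbb{E}_z\!\left[\left|\frac{h(z)-h(-z)}{2}z_i\right|^2\right] \le \mathbb{E}_z\!\left[(h(z)-\mu)^2 z_i^2\right].
$$

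Next I would apply the Cauchy–Schwarz inequality to split the right-hand side as $\big(\mathbb{E}_z[(h(z)-\mu)^4]\big)^{1/2}\big(\mathbb{E}_z[z_i^4]\big)^{1/2}$. The second factor equals $\sqrt{3}$ since $z_i\sim\mathcal{N}(0,1)$. For the first factor I would invoke Lemma~\ref{lemma:concentration_inequality}: $h$ is $G$-Lipschitz, hence $\mathbb{P}_z(|h(z)-\mu|\ge t)\le 2e^{-t^2/(2G^2)}$, and then the layer-cake identity $\mathbb{E}_z[(h(z)-\mu)^4]=\int_0^\infty 4t^3\,\mathbb{P}_z(|h(z)-\mu|\ge t)\,dt$ together with the elementary Gaussian integral $\int_0^\infty t^3 e^{-t^2/(2G^2)}\,dt = 2G^4$ yields $\mathbb{E}_z[(h(z)-\mu)^4]\le 16G^4$. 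Combining, the left-hand side is at most $\sqrt{16G^4}\cdot\sqrt{3}=4\sqrt{3}\,G^2 < 12G^2$, which is the claim (with room to spare, leaving slack for the looser stated constant).

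The one place that genuinely requires care — and the reason the statement is not trivial — is that the naive route, bounding $|h(z)-h(-z)|\le G\,\|2z\|$ and hence $\big|\tfrac{h(z)-h(-z)}{2}z_i\big|^2\le G^2\|z\|^2 z_i^2$, only gives $\mathbb{E}_z[\|z\|^2 z_i^2]\,G^2 = (d+2)G^2$, which grows with the dimension and is therefore useless for the subsequent analysis. The structural fact that rescues a dimension-free bound is that $h$ concentrates around its mean at the Gaussian scale $G$ \emph{independently of $d$}; recentering at $\mu$ before estimating is exactly what converts the $\|z\|^2$ factor (whose typical size is $d$) into a bounded sub-Gaussian fourth moment. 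Note that only the Lipschitz property of $h$ is used — no convexity or smoothness — which is consistent with the role this lemma plays in feeding both the convex and the nonconvex parts of the analysis (via Lemma~\ref{lemma:bound_2moment_grad_est_noisy}).
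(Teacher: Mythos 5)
Your proof is correct and follows essentially the same route as the paper's: recenter at the mean, use symmetry of the Gaussian to reduce to $\mathbb{E}[(h(z)-\mu)^2 z_i^2]$, apply Cauchy--Schwarz, and bound the fourth moment of $h(z)-\mu$ via the Gaussian concentration inequality and a layer-cake integral. In fact your constant $4\sqrt{3}\,G^2$ is slightly sharper than the paper's, which writes $\sqrt{\mathbb{E}[z_i^4]}$ as $3$ rather than $\sqrt{3}$ and thereby lands exactly on $12G^2$; either way the stated bound holds.
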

\begin{proof}{Proof.}
The proof follows \citet[Lemmas 9 \& 10]{shamir2017optimal} closely. Denote $\bar{h}= \mathbb{E}_z[h(z)]$. We have
$$
\begin{aligned}
& \mathbb{E}_z\!\left[
\left|\frac{h(z)-h(-z)}{2}z_i\right|^2
\right] \\
=\ &
\frac{1}{4}\mathbb{E}_z\!\left[ z_i^2
(h(z)-h(-z))^2
\right]
=
\frac{1}{4}\mathbb{E}_z\!\left[ z_i^2
\left((h(z)-\bar{h})-(h(-z)-\bar{h})\right)^2
\right] \\
\leq\ &
\frac{1}{2}\mathbb{E}_z\!\left[ z_i^2
\left((h(z)-\bar{h})^2+(h(-z)-\bar{h})^2\right)
\right]
=
\mathbb{E}_z\!\left[ z_i^2
(h(z)-\bar{h})^2
\right].
\end{aligned}
$$
Then
$$
\begin{aligned}
\mathbb{E}_z\!\left[ z_i^2
(h(z)\!-\!\bar{h})^2
\right]
\leq\ &
\sqrt{\mathbb{E}_z[z_i^4]}
\cdot \sqrt{\mathbb{E}_z[(h(z)\!-\!\bar{h})^4]}
=
3\left(
\int_0^{+\infty}
\!\!\mathbb{P}_z\!
\left((h(z) \!-\! \bar{h})^4
\geq t\right) dt
\right)^{\! 1/2} \\
\leq\ &
3\left(
\int_0^{+\infty}
2\exp\left(-\frac{\sqrt{t}}{2G^2}\right)\,dt
\right)^{1/2}
=12 G^2,
\end{aligned}
$$
where we used Lemma~\ref{lemma:concentration_inequality} in the third step.
\hfill\Halmos
\end{proof}

We then derive bounds on the second moment of the gradient estimator \eqref{eq:2point_grad_est} with $z\sim\mathcal{Z}(x,u)$.
\begin{lemma}\label{lemma:bound_2moment_grad_est}
Let $h:\mathcal{X}\rightarrow\mathbb{R}$ be $G$-Lipschitz, and let $\delta\in(0,1)$ be arbitrary. Then for any $x\in (1-\delta)\mathcal{X}$ and any $i=1,\ldots,d$,
$$
\mathbb{E}_{z\sim\mathcal{Z}(x,u)}\!\left[
\left\|\frac{h(x\!+\!uz)-h(x\!-\!uz)}{2u}z^i\right\|^2\right]
\leq 12 G^2 d_i.
$$
\end{lemma}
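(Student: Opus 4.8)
The plan is to transport the whole estimate back to the standard Gaussian that generates $z$ through projection, so that Lemma~\ref{lemma:bound_2moment_grad_est_prelim} applies directly. Write $\tilde z=(\tilde z^1,\dots,\tilde z^n)\sim\mathcal{N}(0,I_d)$ and $z=\mathcal{P}_{\mathcal{S}(x,u)}[\tilde z]$; by the product structure $\mathcal{S}(x,u)=\prod_i\mathcal{S}_i(x^i,u)$ recalled in Appendix~\ref{sec:proof:lemma:2point_bias}, this gives $z^i=\mathcal{P}_{\mathcal{S}_i(x^i,u)}[\tilde z^i]$ and $z\sim\mathcal{Z}(x,u)$. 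Define $H:\mathbb{R}^d\to\mathbb{R}$ by $H(w)=\tfrac{1}{u}\,h\!\left(x+u\,\mathcal{P}_{\mathcal{S}(x,u)}[w]\right)$. This is well-defined on all of $\mathbb{R}^d$ because $\mathcal{P}_{\mathcal{S}(x,u)}[w]\in\mathcal{S}(x,u)$ forces $x+u\,\mathcal{P}_{\mathcal{S}(x,u)}[w]\in\mathcal{X}$, and since projection onto a closed convex set is $1$-Lipschitz while $h$ is $G$-Lipschitz, $H$ is $G$-Lipschitz on $\mathbb{R}^d$.

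Next I would identify the difference quotient in the lemma as an antisymmetrization of $H$. Because each $\mathcal{S}_i(x^i,u)$, and hence $\mathcal{S}(x,u)$, is centrally symmetric, the projection satisfies $\mathcal{P}_{\mathcal{S}(x,u)}[-w]=-\mathcal{P}_{\mathcal{S}(x,u)}[w]$, so $H(-\tilde z)=\tfrac{1}{u}\,h(x-uz)$ and therefore
$$
\frac{h(x+uz)-h(x-uz)}{2u}=\frac{H(\tilde z)-H(-\tilde z)}{2}.
$$
I also need to control $\|z^i\|$ by $\|\tilde z^i\|$: since $0\in\mathcal{S}_i(x^i,u)$ (indeed $\tfrac{\delta\underline{r}_i}{u}\mathbb{B}_{d_i}\subseteq\mathcal{S}_i(x^i,u)$), we have $\mathcal{P}_{\mathcal{S}_i(x^i,u)}[0]=0$, so $1$-Lipschitzness of the projection yields $\|z^i\|=\|\mathcal{P}_{\mathcal{S}_i(x^i,u)}[\tilde z^i]\|\le\|\tilde z^i\|$.

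Combining these two observations pointwise in $\tilde z$,
$$
\left\|\frac{h(x+uz)-h(x-uz)}{2u}\,z^i\right\|^2
=\left(\frac{H(\tilde z)-H(-\tilde z)}{2}\right)^{\!2}\|z^i\|^2
\le\sum_{k=1}^{d_i}\left(\frac{H(\tilde z)-H(-\tilde z)}{2}\,\tilde z^i_k\right)^{\!2},
$$
where the last inequality uses $\|z^i\|^2\le\|\tilde z^i\|^2=\sum_{k}(\tilde z^i_k)^2$ together with nonnegativity of the prefactor. Taking the expectation over $\tilde z\sim\mathcal{N}(0,I_d)$ and applying Lemma~\ref{lemma:bound_2moment_grad_est_prelim} to the $G$-Lipschitz function $H$ for each coordinate index $k$ gives the bound $12G^2$ per term, hence $12G^2 d_i$ in total, which is the claim.

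The only genuinely non-routine point is the realization that $z$, although not Gaussian, is a $1$-Lipschitz image of a Gaussian, and that central symmetry of $\mathcal{S}(x,u)$ makes $H(-\tilde z)$ produce exactly the ``$-uz$'' evaluation — so the quantity of interest lands in the precise form $\tfrac{H(\tilde z)-H(-\tilde z)}{2}\tilde z^i_k$ required by Lemma~\ref{lemma:bound_2moment_grad_est_prelim}. A small technical check worth stating explicitly is that $H$ is globally $G$-Lipschitz on $\mathbb{R}^d$ even though $h$ is defined only on $\mathcal{X}$; this holds precisely because the projection keeps the argument of $h$ inside $\mathcal{X}$.
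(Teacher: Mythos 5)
Your proposal is correct and follows essentially the same route as the paper's proof: define the auxiliary function $w\mapsto h\!\left(x+u\,\mathcal{P}_{\mathcal{S}(x,u)}[w]\right)$ on all of $\mathbb{R}^d$, note it is Lipschitz because the projection is $1$-Lipschitz, use the central symmetry of $\mathcal{S}(x,u)$ to identify the difference quotient with the antisymmetrization evaluated at the Gaussian $\tilde z$, bound $\|z^i\|\le\|\tilde z^i\|$ via the contraction property of the projection toward the origin, and invoke Lemma~\ref{lemma:bound_2moment_grad_est_prelim} coordinatewise. The only cosmetic difference is your $1/u$ normalization of the auxiliary function, versus the paper keeping the factor $uG$ in the Lipschitz constant and dividing it out at the end.
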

\begin{proof}{Proof.}
Define the auxiliary function
$
\tilde{h}(z)
=h\!\left(
x+u\cdot \mathcal{P}_{\mathcal{S}(x,u)}[z]\right),\forall z\in\mathbb{R}^d
$.
We then have
$$
\begin{aligned}
\left|\tilde{h}(z_1)
-\tilde{h}(z_2)\right|
=\ &
\!\left|h\!\left(
x+u\cdot \mathcal{P}_{\mathcal{S}(x,u)}[z_1]\right)
-h\!\left(
x+u\cdot \mathcal{P}_{\mathcal{S}(x,u)}[z_2]\right)\right| \\
\leq\ &
uG \left\|\mathcal{P}_{\mathcal{S}(x,u)}[z_1]
-\mathcal{P}_{\mathcal{S}(x,u)}[z_2]\right\|
\leq uG \left\|z_1-z_2\right\|,
\end{aligned}
$$
showing that $\tilde h$ is a $uG$-Lipschitz continuous function on $\mathbb{R}^d$. Moreover, we have
$$
\begin{aligned}
& \mathbb{E}_{z\sim\mathcal{Z}(x,u)}\!\left[\left\|
(h(x+uz)-h(x-uz))z^i\right\|^2\right] \\
=\ &
\mathbb{E}_{z\sim\mathcal{N}(0,I_d)}\!\left[\left\|
(\tilde{h}(z)-\tilde{h}(-z))
\cdot\mathcal{P}_{\mathcal{S}_i(x^i,u)}[z^i]\right\|^2\right]
\leq
\mathbb{E}_{z\sim\mathcal{N}(0,I_d)}\!\left[\left\|
(\tilde{h}(z)-\tilde{h}(-z)) z^i\right\|^2\right],
\end{aligned}
$$
where the last inequality follows from $\|\mathcal{P}_{\mathcal{S}_i(x^i,u)}[z^i]\|\leq \|z^i\|$ as $\mathcal{S}_i(x^i,u)$ is a convex set containing the origin.

Then by Lemma~\ref{lemma:bound_2moment_grad_est_prelim}, we have
$$
\mathbb{E}_{z\sim\mathcal{N}(0,I_d)}\!\left[\left\|
\frac{\tilde{h}(z)-\tilde{h}(-z)}{2u} z^i\right\|^2\right]
\leq
\frac{1}{u^2}\cdot 12u^2G^2\cdot d_i
=12G^2d_i,
$$
which gives the desired bound.
\hfill\Halmos
\end{proof}

We are now ready to prove \eqref{lemma:bound_2moment_grad_est_noisy}. Recall that $\varepsilon^+_j(t)$ and $\varepsilon^-_j(t)$ are the independent additive noise on the observed local cost values. Denoting $\varepsilon_j(t)=\varepsilon^+_j(t)+\varepsilon^-_j(t)$, we have
$$
\begin{aligned}
& \mathbb{E}\!\left[
\big\|D_j(t)\,z^i(t)\big\|^2\Big|\mathcal{F}_{t}
\right] \\
=\ &
\mathbb{E}\!\left[
\left\|\frac{f_j(x(t)\!+\!uz(t))-f_j(x(t)\!-\!uz(t))}{2u}z^i(t)\right\|^2\bigg|\mathcal{F}_{t}
\right]
+\frac{1}{4u^2}\mathbb{E}\!\left[
\varepsilon_j(t)^2 \|z^i(t)\|^2
|\mathcal{F}_t
\right] \\
\leq \ &
12G^2d_i
+\frac{\sigma^2}{2u^2}d_i,
\end{aligned}
$$
where we used Lemma~\ref{lemma:bound_2moment_grad_est}, the independence between $\varepsilon_j(t)$ and $z^i(t)$, and the fact that
$
\mathbb{E}_{z^i\sim\mathcal{Z}^i(x,u)}
\!\left[\|z^i\|^2\right]
\leq
\mathbb{E}_{z^i\sim\mathcal{N}(0,I_{d_i})}\!\left[\|z^i\|^2\right]\leq d_i
$.
Then,
$$
\begin{aligned}
\mathbb{E}\!\left[\|G^i(t)\|^2\right]
\leq\ &
\frac{1}{n}\sum_{j=1}^n\mathbb{E}\!\left[\left\|D_j(\tau^i_j(t))z^i(\tau^i_j(t))\right\|^2\right]
\leq
\left(12G^2
+\frac{\sigma^2}{2u^2}\right)d_i,
\end{aligned}
$$
and by summing over $i=1,\ldots,n$, we get the bound on $\mathbb{E}\!\left[\|G(t)\|^2\right]$.

\section{Proof of Lemma~\ref{lemma:mirror_descent_temp_term1}}
\label{sec:proof:lemma:mirror_descent_temp_term1}

For each $\tau\geq 0$, we have
$$
\begin{aligned}
& \mathbb{E}\!\left[
\left.\frac{1}{n}
\sum\nolimits_{i,j=1}^n
\left\langle
D_j(\tau^i_j(t)) z^i(\tau^i_j(t)),\tilde{x}^i-x^i(\tau^i_j(t))
\right\rangle\cdot\mathsf{1}_{\tau^i_j(t)=\tau}\right|\mathcal{F}_\tau\right] \\
=\ &
\frac{1}{n}
\sum\nolimits_{i,j=1}^n
\left\langle
\kappa(u)\nabla^i f_j^{u}(x(\tau))
,
\tilde{x}^i-x^i(\tau)\right\rangle
\cdot \mathsf{1}_{\tau^i_j(t)=\tau} \\
& + \frac{1}{n}
\sum\nolimits_{i,j=1}^n
\left\langle
\mathbb{E}\!\left[D_j(\tau)z^i(\tau)|\mathcal{F}_\tau\right]
-\kappa(u)
\nabla^i f_j^{u}(x(\tau)),
\tilde{x}^i-x^i(\tau)
\right\rangle
\cdot \mathsf{1}_{\tau^i_j(t)=\tau},
\end{aligned}
$$
where the second term can be bounded by Lemma~\ref{lemma:2point_bias} and $\sum_{i=1}^n\overline{R}_i\leq\sqrt{n}\overline{R}$ as
$$
\begin{aligned}
& \frac{1}{n}
\sum_{i,j=1}^n
\left\langle
\mathbb{E}\!\left[D_j(\tau)z^i(\tau)|\mathcal{F}_\tau\right]
-\kappa(u)
\nabla^i f_j^{u}(x(\tau)),
\tilde{x}^i-x^i(\tau)
\right\rangle
\cdot \mathsf{1}_{\tau^i_j(t)=\tau} \\
\leq\ &
\frac{1}{n}\sum_{i,j=1}^n
\frac{2G\overline{R}}{u}
\exp\!\left(\frac{d}{2}
\!-\!\frac{\delta^2\underline{r}^2}{4u^2}\right)
\overline{R}_i
\cdot
\mathsf{1}_{\tau^i_j(t)=\tau}
\leq
\frac{2G\overline{R}}{u}
\exp\!\left(\frac{d}{2}
\!-\!\frac{\delta^2\underline{r}^2}{4u^2}\right)
\sqrt{n}\cdot\overline{R}
\cdot \mathsf{1}_{\tau^i_j(t)=\tau}.
\end{aligned}
$$
Therefore
$$
\begin{aligned}
& \mathbb{E}\!\left[
\frac{1}{n}
\sum\nolimits_{i,j=1}^n
\left\langle
D_j(\tau^i_j(t)) z^i(\tau^i_j(t)),\tilde{x}^i-x^i(\tau^i_j(t))
\right\rangle\right] \\
=\ &
\sum\nolimits_{\tau}
\mathbb{E}\!\left[
\mathbb{E}\!\left[
\left.
\frac{1}{n}
\sum\nolimits_{i,j=1}^n
\left\langle
D_j(\tau^i_j(t)) z^i(\tau^i_j(t)),\tilde{x}^i-x^i(\tau^i_j(t))
\right\rangle
\cdot \mathsf{1}_{\tau^i_j(t)=\tau}
\right|\mathcal{F}_\tau\right]
\right] \\
\leq\ &
\frac{\kappa(u)}{n}
\mathbb{E}\!\left[
\sum\nolimits_{i,j=1}^n
\left\langle\nabla^i f_j^{u}\big(x(\tau^i_j(t))\big),
\tilde{x}^i-x^i(\tau^i_j(t))\right\rangle
\right]
+
\frac{2G\overline{R}}{u}
\exp\!\left(\frac{d}{2}
-\frac{\delta^2\underline{r}^2}{4u^2}\right)
\sqrt{n}\cdot \overline{R}.
\end{aligned}
$$
Now,
\begin{align*}
& \frac{1}{n}\sum\nolimits_{i,j=1}^n
\left\langle \nabla^i f_j^{u}\big(x(\tau^i_j(t))\big),
\tilde{x}^i
-x^i(\tau^i_j(t))\right\rangle \\
=\ &
\left\langle
\nabla f^{u}(x(t)),
\tilde{x}
-x(t)
\right\rangle
+ 
\frac{1}{n}\sum\nolimits_{i,j=1}^n
\left\langle
\nabla^i f_j^{u}(x(t)),
x^i(t)-x^i(\tau^i_j(t))
\right\rangle \\
& +
\frac{1}{n}\sum\nolimits_{i,j=1}^n
\left\langle
\nabla^i f_j^{u}\big(x(\tau^i_j(t))\big)
-\nabla^i f_j^{u}(x(t)),
\tilde{x}^i-x^i(\tau^i_j(t))
\right\rangle,
\end{align*}
where $f^u(x)\coloneqq
\frac{1}{n}\sum_j f^u_j(x)$. Note that by~\eqref{eq:def_hu_compact}, we have $f^u(x)
=\mathbb{E}_{y\sim\mathcal{Y}(u)}[f(x+uy)]$, and together with the convexity of $f$, we see that $f^u$ is convex and $f^u(x)\geq f(x)$. Then by Lemma~\ref{lemma:2point_bias},
$$
\langle \nabla f^u(x(t)),\tilde{x}-x(t)\rangle
\leq
f^u(\tilde{x})-f^u(x(t))
\leq
f(\tilde{x})
-f(x(t))
+\min\!\left\{uG\sqrt{d},\frac{1}{2}u^2Ld\right\},
$$
and by Lemma~\ref{lemma:dist_decision_var}, we have
\begin{align*}
& \mathbb{E}\!\left[\frac{1}{n}\sum\nolimits_{i,j=1}^n
\left\langle
\nabla^i f_j^u(x(t)),
x^i(t)-x^i(\tau^i_j(t))
\right\rangle\right] \\
\leq\ &
\frac{1}{2n}
\sum\nolimits_{i,j=1}^n\left(
2\sqrt{3}\eta\bar{\mathfrak{b}}
\sqrt{d}\,
\mathbb{E}\!\left[
\!\|\nabla^i f_j^u(x(t))\|^2\right]
+ 
\frac{\mathbb{E}\!\left[\|x^i(t)-x^i(\tau^i_j(t))\|^2\right]}{2\sqrt{3}\eta\bar{\mathfrak{b}}\sqrt{d}}
\right) \\
\leq\ &
\frac{1}{2n}\left(
2\sqrt{3}\eta\bar{\mathfrak{b}}\sqrt{d}\,nG^2
+\frac{1}{2\sqrt{3}\eta\bar{\mathfrak{b}}\sqrt{d}}\,
\eta^2
\left(12G^2+\frac{\sigma^2}{2u^2}\right)
\cdot\sum\nolimits_{i,j=1}^n
(b_{ij}+\Delta)^2 d_i
\right) \\
=\ &
\frac{1}{2}\left(
2\sqrt{3}\eta\bar{\mathfrak{b}} G^2\sqrt{d}
+ 2\sqrt{3}\eta\bar{\mathfrak{b}}
\left(G^2+\frac{\sigma^2}{24u^2}\right)\sqrt{d}
\right)
\leq
2\sqrt{3}\cdot \eta \bar{\mathfrak{b}}\left(G^2
+ \frac{\sigma^2}{24u^2}\right)\sqrt{d},
\end{align*}
and
\begin{align*}
& \mathbb{E}\!\left[\frac{1}{n}\sum\nolimits_{i,j=1}^n
\left\langle
\nabla^i f_j^u\big(x(\tau^i_j(t))\big)
-\nabla^i f_j^u(x(t)),
\tilde{x}^i-x^i(\tau^i_j(t))
\right\rangle\right] \\
\leq\ &
\frac{1}{n}
\sum_{i,j=1}^n
\mathbb{E}\!\left[
\left\|
\nabla^i f_j^u\big(x(\tau^i_j(t))\big)
-\nabla^i f_j^u(x(t))
\right\| \overline{R}_i\right]
\leq
\frac{L}{n}
\!\sum_{i,j=1}^n
\!\sqrt{\mathbb{E}\left[
\|x(\tau^i_j(t))-x(t)\|^2\right]}
\cdot \overline{R}_i \\
\leq\ &
\frac{\eta L\sqrt{d}}{n}
\sqrt{12G^2+\frac{\sigma^2}{2u^2}}
\sum_{i,j=1}^n
(b_{ij}+\Delta) \overline{R}_i
\leq
\eta L\bar{b}\sqrt{nd}
\sqrt{12G^2+\frac{\sigma^2}{2u^2}}\cdot\overline{R},
\end{align*}
where the last step follows from Cauchy's inequality. Summarizing these results, we get the desired bound.

\section{Proof of Lemma~\ref{lemma:nonconvex_inner_product_bound_1}}\label{sec:proof:lemma:nonconvex_inner_product_bound_1}

We have
$$
\begin{aligned}
& \mathbb{E}\!\left[
-\frac{1}{n}
\!\sum\nolimits_{i,j=1}^n
\!\left\langle \nabla^i f(x(t))
-\nabla^i f(x(\tau^i_j(t))),
D_j(\tau^i_j(t)) z^i(\tau^i_j(t))
-\nabla^i f_j^u\big(x(\tau^i_j(t))\big)
\right\rangle
\right]
\\
\leq\ &
\frac{1}{2n}\cdot\frac{1}{\eta L\bar{b}\sqrt{n}}
\sum\nolimits_{i,j=1}^n
\mathbb{E}\!\left[
\left\|\nabla^i f(x(t))
-\nabla^i f\big(x(\tau^i_j(t))\big)\right\|^2
\right] \\
& + \frac{1}{2n}\cdot\eta L \bar{b}\sqrt{n}
\sum\nolimits_{i,j=1}^n
\mathbb{E}\left[\left\|D_j(\tau^i_j(t))z^i(\tau^i_j(t))
-\nabla^i f_j^u\big(x(\tau^i_j(t))\big)\right\|^2
\right],
\end{aligned}
$$
where we used the fact that $2\langle u,v\rangle\leq \|u\|^2/\epsilon+\epsilon \|v\|^2$ for any $\epsilon>0$ and any vectors $u,v$.
Now for the first term, we have
$$
\begin{aligned}
& \sum\nolimits_{i,j=1}^n
\mathbb{E}\!\left[
\left\|\nabla^i f(x(t))
-\nabla^i f\big(x(\tau^i_j(t))\big)\right\|^2
\right] \\
\leq\ &
\eta^2 L^2 d\left(12G^2 + \frac{\sigma^2}{2u^2}\right)
\sum\nolimits_{i,j=1}^n (b_{ij}+\Delta)^2
=
\eta^2 L^2 n^2 \bar{b}^2 d \left(12G^2 + \frac{\sigma^2}{2u^2}\right),
\end{aligned}
$$
where we used Lemma~\ref{lemma:dist_decision_var} and the $L$-smoothness of $f$. And for the second term, we notice that
$$
\begin{aligned}
& \sum\nolimits_{i,j=1}^n
\mathbb{E}\left[\left\|D_j(\tau^i_j(t))z^i(\tau^i_j(t))
-\nabla^i f_j^u\big(x(\tau^i_j(\tau))\big)\right\|^2
\right] \\
=\ &
\mathbb{E}\!\left[\sum\nolimits_{\tau}
\sum\nolimits_{i,j=1}^n
\mathbb{E}\left[\left.\left\|D_j(\tau^i_j(t))z^i(\tau^i_j(t))
-\nabla^i f_j^u\big(x(\tau^i_j(\tau))\big)\right\|^2
\cdot\mathsf{1}_{\tau^i_j(t)=\tau}
\right|\mathcal{F}_\tau\right]\right] \\
=\ &
\mathbb{E}\!\left[\sum\nolimits_{\tau}
\sum\nolimits_{i,j=1}^n
\mathbb{E}\!\left[\left.\left\|D_j(\tau)z^i(\tau)
-\nabla^i f_j^u\big(x(\tau)\big)\right\|^2
\right|\mathcal{F}_\tau\right]
\cdot\mathsf{1}_{\tau^i_j(t)=\tau}\right],
\end{aligned}
$$
and since $\mathbb{E}\!\left[D_j(\tau)z^i(\tau)|\mathcal{F}_\tau\right]=\nabla^i f^u_j(x(\tau))$ for $\tau\geq 0$ by Lemma~\ref{lemma:bias_grad_est_nonconvex}, we have
$$
\mathbb{E}\!\left[\left.\left\|D_j(\tau)z^i(\tau)
-\nabla^i f^u_j\big(x(\tau)\big)\right\|^2
\right|\mathcal{F}_\tau\right]
\leq \mathbb{E}\!\left[\left.\left\|D_j(\tau)z^i(\tau)\right\|^2
\right|\mathcal{F}_\tau\right]
\leq
\left(12G^2+\frac{\sigma^2}{2u^2}\right)d_i,
$$
and consequently,
$$
\begin{aligned}
&\sum\nolimits_{i,j=1}^n
\mathbb{E}\left[\left\|D_j(\tau^i_j(t))z^i(\tau^i_j(t))
-\nabla^i f^u_j\big(x(\tau^i_j(\tau))\big)\right\|^2
\right] \\
\leq\  &
\sum\nolimits_{i,j=1}^n
\left(12G^2 + \frac{\sigma^2}{2u^2}\right) d_i
\cdot \mathbb{E}\!\left[\sum\nolimits_\tau\mathsf{1}_{\tau^i_j(t)=\tau}\right]=
n\left(12G^2 + \frac{\sigma^2}{2u^2}\right)d,
\end{aligned}
$$
where we used Lemma~\ref{lemma:bound_2moment_grad_est_noisy} and the fact that $\tau^i_j(t)\geq 0$ when $t\geq B$. Summarizing the above results gives the desired bound.

\section{Proof of Lemma~\ref{lemma:nonconvex_inner_product_bound_2}}\label{sec:proof:lemma:nonconvex_inner_product_bound_2}

By Lemma \ref{lemma:bias_grad_est_nonconvex}, for any $\tau\geq 0$,
$$
\mathbb{E}\bigg[\!-\!
\left\langle\nabla^i f\big(x(\tau)\big),
D_j(\tau) z^i(\tau)\right\rangle
\cdot\mathsf{1}_{\tau^i_j(t)=\tau}
\Big|\mathcal{F}_{\tau}\bigg]
=
-\left\langle\nabla^i f(x(\tau)),\nabla^i f^u_j(x(\tau))\right\rangle
\cdot \mathsf{1}_{\tau^i_j(t)=\tau}.
$$
Then since $\tau^i_j(t)\geq 0$ when $t\geq B$, we see that
$$
\begin{aligned}
& \mathbb{E}\!\left[-
\left\langle\nabla^i f\big(x(\tau^i_j(t))\big),
D_j(\tau^i_j(t)) z^i(\tau^i_j(t))\right\rangle
\right] \\
=\ &
\sum\nolimits_{\tau}\mathbb{E}\!\left[
\mathbb{E}\!\left[\left.-
\left\langle\nabla^i f\big(x(\tau)\big),
D_j(\tau) z^i(\tau)\right\rangle
\mathsf{1}_{\tau^i_j(t)=\tau}\right|\mathcal{F}_\tau\right]\right] \\
=\ &
\mathbb{E}\bigg[-
\sum\nolimits_{\tau}\left\langle\nabla^i f(x(\tau)),\nabla^i f^u_j\big(x(\tau)\big)\right\rangle
\cdot \mathsf{1}_{\tau^i_j(t)=\tau}\bigg] \\
=\ &
\mathbb{E}\!\left[
-\left\langle\nabla^i f\big(x(\tau^i_j(t))\big),\nabla^i f^u_j\big(x(\tau^i_j(t))\big)\right\rangle
\right].
\end{aligned}
$$

We then notice that
$$
\begin{aligned}
&-\frac{1}{n}\sum\nolimits_{i,j=1}^n
\left\langle\nabla^i f\big(x(\tau^i_j(t))\big),\nabla^i f^u_j\big(x(\tau^i_j(t))\big)\right\rangle \\
&\quad
-\frac{1}{n}\sum\nolimits_{i,j=1}^n\left\langle \nabla^i f(x(t))
-\nabla^i f(x(\tau^i_j(t))),\nabla^i f_j^u\big(x(\tau^i_j(t))\big)
\right\rangle
\\
=\ &
-\frac{1}{n}\sum\nolimits_{i,j=1}^n
\left\langle \nabla^i f\big(x(t)\big),\nabla^i f^u_j\big(x(\tau^i_j(t))\big) - \nabla^i f^u_j\big(x(t)\big)\right\rangle
 \\
&
-
\left\langle
\nabla f(x(t)),\nabla f^u(x(t))
-\nabla f(x(t))\right\rangle
-\|\nabla f(x(t))\|^2.
\end{aligned}
$$
The expectation of the first two terms on the right-hand side of the above equality can be respectively bounded by
$$
\begin{aligned}
& \mathbb{E}\!\left[-\frac{1}{n}\sum\nolimits_{i,j=1}^n
\left\langle \nabla^i f\big(x(t)\big),\nabla^i f^u_j\big(x(\tau^i_j(t))\big) - \nabla^i f^u_j\big(x(t)\big)\right\rangle\right] \\
\leq\ &
\frac{1}{2n}\!\sum\nolimits_{i,j=1}^n\!
\mathbb{E}\!\left[
2\sqrt{3}\eta L\bar{b}\sqrt{nd} \left\|\nabla^i f(x(t))\right\|^2
+
\frac{\left\|\nabla^i f^u_j\big(x(\tau^i_j(t))\big) - \nabla^i f^u_j\big(x(t)\big)\right\|^2}{2\sqrt{3}\eta L\bar{b}\sqrt{nd}}
\right] \\
\leq\ &
\frac{1}{2}
\left(
2\sqrt{3}\eta L\bar{b}\sqrt{nd}G^2
+\frac{1}{2\sqrt{3}\eta L\bar{b}n^{3/2}\sqrt{d}}
\eta^2 L^2
\left(12G^2+\frac{\sigma^2}{2u^2}\right)d
\cdot \sum\nolimits_{i,j=1}^n (b_{ij}+\Delta)^2
\right) \\
\leq\ &
\frac{\eta L\bar{b}\sqrt{nd}}{2\sqrt{3}}
\left(12G^2+\frac{\sigma^2}{2u^2}\right),
\end{aligned}
$$
where we used Lemma~\ref{lemma:dist_decision_var} and the $L$-smoothness of $f_j^u$,
and
$$
\begin{aligned}
& \mathbb{E}\left[-
\left\langle
\nabla f(x(t)),\nabla f^u(x(t))-\nabla f(x(t))\right\rangle\right] \\
\leq\ &
\frac{1}{2}\mathbb{E}\left[
\frac{1}{3}\left\|
\nabla f(x(t))\right\|^2
+
3\left\|\nabla f^u(x(t))-\nabla f(x(t))\right\|^2\right]
\leq
\frac{1}{6}\mathbb{E}\!\left[\|\nabla f(x(t))\|^2\right]
+ \frac{3}{2}u^2L^2d,
\end{aligned}
$$
where we used Lemma~\ref{lemma:bias_grad_est_nonconvex}. Summarizing these bounds completes the proof.


\end{document}